\newdimen\headwidth
\newdimen\headrulewidth
\newcommand{\dbar}{\overline{\partial}}                              %
\newtheorem{thm}{Theorem}[section]
\newtheorem{lem}[thm]{Lemma}
\newtheorem{prop}[thm]{Proposition}
\newtheorem{rem}[thm]{Remark}
\newtheorem{cor}[thm]{Corollary}
\numberwithin{equation}{section}
\begin{document}

\title {Necessary conditions for H\"older regularity gain of $\dbar$ equation in  $\mathbb C^3$}

\author{Young Hwan You \thanks{Department of Mathematics, Indiana University East, IN 47374, USA. E-mail:youy@iue.edu
\newline 2010 \textit{Mathematics Subject Classification} Primary 32F45; Secondary 32T25.}}

\date{}

\maketitle

\begin{abstract}
\noindent Suppose that a smooth holomorphic curve  $V$ has order of contact $\eta$ at a point $w_0$ in the boundary of a pseudoconvex domain $\Omega$ in $\mathbb{C}^3.$
We show that the maximal gain in H\"older regularity for solutions of the $\bar{\partial}$-equation is at most $\frac{1}{\eta}.$ 
\end{abstract}

\section{Introduction}\label{sec1}

Let $\Omega$ be a given domain in $\mathbb{C}^n$ and $\alpha$ be a $\bar{\partial}$-closed form of type $(0,1)$ in $\Omega$. The $\bar{\partial}$-problem consists of finding a solution $u$ of $\bar{\partial} u = \alpha$ that satisfies certain boundary regularity estimates as measured by  either $L^2$ or $L^p$ norms or in H\"older norms.

When $\Omega$ is strongly pseudoconvex, in the $L^2$-sense, Kohn \cite{FK,K1,K2} showed that for any $s\geq 0$, there is a canonical solution of $\bar{\partial}u = \alpha$ such that 
\begin{equation}\label{kohnestimate}
|||u|||_{s+\epsilon} \leq C \left\|\alpha\right\|_s \quad \mbox{and} \quad \ u \perp A(\Omega) \cap L^2(\Omega),
\end{equation}
with $\epsilon = \frac{1}{2}.$ 
(We say $u$ is the canonical solution if $u \perp A(\Omega) \cap L^2(\Omega).$)
Here, $\left\|\cdot\right\|^2_s$ is the $L^2$-Sobolev norm of order $s$ and the norm $|||\cdot |||_{s+\epsilon}$ measures tangential derivatives near the boundary of order $s+\epsilon$ in the tangential directions. Kohn showed that if $U$ satisfies
$\square U = (\bar{\partial}\bar{\partial}^* + \bar{\partial}^*\bar{\partial})U = \alpha,$ and if $\bar{\partial}\alpha = 0,$ then
$u = \bar{\partial}^*U$ is the canonical solution of $\bar{\partial}u = \alpha.$
To prove regularity for this solution, Kohn proved the a priori estimate
\begin{equation}\label{subellipticestimate}
|||\phi|||_{\epsilon}^2 \leq C(\left\|\bar{\partial} \phi \right\|^2 + \left\|\bar{\partial}^* \phi\right\|^2 + \left\|\phi\right\|^2)
\end{equation}  
with $\epsilon = \frac{1}{2}.$
Here, $\phi \in C_{(0, 1)}^{\infty}(W) \cap \mbox{Dom}(\bar{\partial}) \cap \mbox{Dom}(\bar{\partial}^*) $ is compactly supported in the neighborhood $W$ of the boundary point $w_0.$ Using this estimate and a bootstrap argument, Kohn proved (\ref{kohnestimate}). Stein and Greiner \cite{GS} later extended (\ref{kohnestimate}) to similar estimates in $L^p$ and H\"older spaces. For example,  if $\left\|\cdot\right\|_{\Lambda^s (\Omega)}$ is the H\"older norm of degree $s$, then Stein and Greiner proved that $u$ satisfies 
\begin{equation} \label{holder_estimate}
\left\|u\right\|_{\Lambda^{s+\epsilon}(\Omega)} \leq C \left\|\alpha\right\|_{\Lambda^s (\Omega)},  
\end{equation}
with $\epsilon = \frac{1}{2}.$

 Kohn extended his $L^2$ results to when $\Omega$ is a regular finite 1-type pseudoconvex domain in $\mathbb{C}^2.$ To define a regular finite 1-type, we measure the order of contact of a given holomorphic curve at $w_0 \in b\Omega.$ Let $V$ be a one-dimensional {\it smooth} variety parametrized by $\zeta \rightarrow \gamma(\zeta) = (\gamma_1(\zeta), \cdots, \gamma_n(\zeta)),$ where $\gamma(0) = w_0$ and $\gamma'(0) \neq 0.$ We define the order of contact of the curve by $\nu_o(R\circ\gamma),$ where $R$ is a defining function of $\Omega$ and $\nu_o(g)$ is just the order of vanishing (an integer at least equal to $2$) of $g$ at $0.$ We then define the type, $T_{\Omega}^{reg}(w_0) = \sup\{\nu_o (R \circ \gamma) ;  \mbox{all} \ \gamma \ \mbox{with} \ \gamma(0) = w_0, \gamma'(0) \neq 0\}.$ Further, we can define the regular type of $\Omega$ by 
$T^{reg}(\Omega) = \sup \{T_{\Omega}^{reg}(w_0) ; w_0 \in b\Omega\}.$
 Kohn \cite{K} proved that if $\Omega$ is a regular finite 1-type pseudoconvex domain in $\mathbb{C}^2$, then (\ref{kohnestimate}) holds for $\epsilon = \frac{1}{T^{reg}(\Omega)}.$ Similarly, Nagel-Rosay-Stein-Wainger \cite{NRSW} showed that (\ref{holder_estimate}) also holds for the same $\epsilon$.

In order to discuss similar estimates in $\mathbb{C}^n,$ it is important to consider the order of contact of {\it singular curves}. We define the order of contact of a holomorphic curve parametrized by $\zeta \rightarrow \gamma(\zeta),$ with $\gamma(0) = w_0,$ by  $C_{\Omega}(\gamma, w_0) = \frac{\nu_o (R \circ \gamma)}{\nu_o(\gamma)},$ where $\nu_o(\gamma)=\min\{\nu_o (\gamma_k);  k =1, \cdots, n\}.$ Define the type of point $w_0$ by $T_{\Omega}(w_0)= \sup \{C_{\Omega}(\gamma, w_0); \mbox{all} \ \gamma \ \mbox{with} \ \gamma(0)= w_0 \}$ and finally, the type of $\Omega$ is $T_{\Omega}= \sup \{T_{\Omega}(w_0); w_0 \in b\Omega \}.$ In the case of the $L^2$-norm, Catlin \cite{C3} showed that if there is a curve $V$ parametrized by $\gamma$ through $w_0 \in b\Omega$, where $\Omega \subset \mathbb{C}^n$ and (\ref{subellipticestimate}) holds, then $\epsilon \leq \frac{1}{C_{\Omega}(\gamma, w_0)}.$ In H\"older norms, McNeal \cite{Mc} proved  that if, with an additional assumption, $\Omega$ admits a holomorphic support function at $w_0 \in b\Omega$ and (\ref{holder_estimate}) holds, then  $\epsilon \leq \frac{1}{C_{\Omega}(\gamma, w_0)}$.

There is the third notion of type, the ``Bloom-Graham" type, $T_{BG}(w_0).$ It turns out that $T_{BG}(w_0)$ is the maximal order of contact of smooth $(n-1)$-dimensional complex submanifold. Thus, it follows that for any $w_0 \in b\Omega,$  $T_{BG}(w_0) \leq T_{\Omega}^{reg}(w_0) \leq T_{\Omega}(w_0).$ Krantz \cite{Kr} showed that if $T_{BG}(w_0) = m,$ then $\epsilon \leq \frac{1}{m}.$  \\

In this paper we present geometric conditions that must hold if H\"older estimate of order $\epsilon$ is valid in a neighborhood of $w_0 \in b\Omega$ in $\mathbb{C}^3.$ The main result is the following theorem: 

\begin{thm} \label{main_theorem}
Let $\Omega = \{R(w) < 0\}$ be a smoothly bounded pseudoconvex domain in $\mathbb{C}^3.$ Suppose that there is a $1$-dimensional smooth  analytic variety $V$ passing through $w_0$ such that for all $w \in V$, $w$ sufficiently close to $w_0$, $$|R(w)| \leq C|w-w_0|^\eta,$$
where $\eta >0.$ 
{\it If there exists neighborhood $W$ of $w_0$ so that for all $\alpha \in L_{\infty}^{0,1} ({\Omega})$ with $\bar{\partial}\alpha = 0$, there is a $u \in \Lambda^{\epsilon} (W \cap \overline{\Omega})$ and $C>0$ such that $\bar{\partial}u =\alpha$ and $$ {\lVert u \rVert}_{{\Lambda^\epsilon}(W \cap \overline{\Omega})} \leq C{\lVert \alpha \rVert}_{L_{\infty}(\Omega)},$$
then $\epsilon \leq \frac{1}{\eta}$}.
\end{thm}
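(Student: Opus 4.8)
\emph{Proof proposal.} The plan is to argue by contradiction and scaling: assume the $\Lambda^\epsilon$ estimate holds on a neighborhood $W$ of $w_0$ and that $\epsilon>\tfrac1\eta$, and manufacture, for each small $\delta>0$, a $\bar\partial$-closed form $\alpha_\delta$ with $\|\alpha_\delta\|_{L^\infty(\Omega)}$ bounded (in fact tending to $0$) whose every solution must oscillate more than $\Lambda^\epsilon$ permits. First I would normalize coordinates so that $w_0=0$ and $R(w)=\operatorname{Re}w_3+(\text{higher order})$, with $w_3$ the complex normal direction; since $V$ is a \emph{smooth} one-dimensional variety I may take $V=\{w_2=w_3=0\}$, and the contact hypothesis becomes $|R(w_1,0,0)|\le C|w_1|^{\eta}$. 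Using this and the smoothness of $R$ one checks that for a large constant $b$ the point $z^\delta=(d\,\de,0,-b\delta/2)$ lies in $\Omega$ with $\operatorname{dist}(z^\delta,\bo)\sim\delta$, that the tangential disc $\{(w_1,0,-b\delta/2):|w_1-d\de|\le c_0\de\}$ is contained in $\Omega$, and that the complex-normal segment through $z^\delta$ of length $\sim\delta$ also stays in $\Omega$. The scale $\de=\delta^{1/\eta}$ in the $V$-direction is exactly what the order of contact $\eta$ forces, and this is the only point where the hypothesis on $V$ is used.

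The crux of the construction is to produce a holomorphic function on $\Omega$ playing, in $\ct$, the role a holomorphic support function played in McNeal's $\mathbb{C}^2$ argument. I would enlarge $\Omega$ near $0$ to a still-pseudoconvex domain $\od\supset\oo$ (a ``bump'' at the anisotropic scale $(\de,\tau_\delta,\delta)$) and, via Hörmander's $L^2$-estimate for $\bar\partial$ on $\od$ with a plurisubharmonic weight adapted to that bump, build a bounded holomorphic $g_\delta$ on $\od$ which is $O(\delta)$ on the relevant piece of $\Omega$ yet bounded below near $z^\delta$, so that for a suitable value $v_\delta$ the hypersurface $\{g_\delta=v_\delta\}\cap\oo$ avoids the support of the cut-off below while $|g_\delta(z^\delta)-v_\delta|\sim\delta$. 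With $\chi_\delta=\chi(w_1/\de)$ a cut-off in the $V$-direction, $\equiv1$ for $|w_1|\ge\de$ and $\equiv0$ for $|w_1|\le\tfrac12\de$, set
\[
u^0_\delta=\lambda_\delta\,\frac{\chi_\delta}{g_\delta-v_\delta},\qquad \alpha_\delta=\bar\partial u^0_\delta=\lambda_\delta\,\frac{\bar\partial\chi_\delta}{g_\delta-v_\delta}.
\]
Since $|\bar\partial\chi_\delta|\sim\de^{-1}$ on its support and $|g_\delta-v_\delta|\gtrsim\delta$ there, the normalization $\|\alpha_\delta\|_{L^\infty(\Omega)}\le1$ (or $\to0$) forces $\lambda_\delta\sim\delta^{\,1+1/\eta}$ (times a small extra factor if we wish $\alpha_\delta\to0$); one verifies $\alpha_\delta\in L^{0,1}_\infty(\Omega)$ and $\bar\partial\alpha_\delta=0$.

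Let $u_\delta\in\Lambda^\epsilon(W\cap\oo)$ be the solution given by the hypothesis, $\|u_\delta\|_{\Lambda^\epsilon}\le C\|\alpha_\delta\|_{L^\infty}$, and put $h_\delta:=u^0_\delta-u_\delta$, holomorphic on $\Omega$ and continuous up to $\bo$ near $0$. Evaluating $u^0_\delta$ at $z^\delta$ and at the point $\tzd$ obtained by pushing $z^\delta$ a distance $\sim\delta$ along the complex normal: since $g_\delta-v_\delta$ has size $\sim\delta$ at both points but changes by a definite fraction of $\delta$ between them, $|u^0_\delta(z^\delta)-u^0_\delta(\tzd)|\sim\lambda_\delta/\delta\sim\delta^{1/\eta}$ while $|z^\delta-\tzd|\sim\delta$; hence, using the $\Lambda^\epsilon$ bound over distance $\sim\delta$,
\[
\delta^{1/\eta}\ \lesssim\ \bigl|u_\delta(z^\delta)-u_\delta(\tzd)\bigr|+\bigl|h_\delta(z^\delta)-h_\delta(\tzd)\bigr|\ \le\ C\,\delta^{\epsilon}+\bigl|h_\delta(z^\delta)-h_\delta(\tzd)\bigr|.
\]
The proof will be complete once one shows $|h_\delta(z^\delta)-h_\delta(\tzd)|=o(\delta^{1/\eta})$, for then $\delta^{1/\eta}\lesssim C\delta^{\epsilon}$ for all small $\delta$, forcing $\epsilon\le\tfrac1\eta$.

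I expect this last point to be the main obstacle, and the reason the $\ct$ statement is not a formal consequence of the $\mathbb{C}^2$ one. A merely bounded holomorphic function can oscillate by $O(1)$ over distance $\delta$ at depth $\delta$, so $\|h_\delta\|_{L^\infty}\lesssim1$ does not suffice. On $\{\chi_\delta=0\}$ one has $h_\delta=-u_\delta$, which the estimate makes small, and on the deeper part of $\{\chi_\delta=1\}$ one has $|u^0_\delta|\lesssim\lambda_\delta$, also small; the only part of the boundary of the local piece where $h_\delta$ is not \emph{a priori} small is the slice of $\bo$ threading through $\{\chi_\delta\ne0\}$. To damp that contribution I would feed the same plurisubharmonic weight (or $g_\delta$ itself) back in as a barrier: comparing the subharmonic function $\log|h_\delta|$ against a plurisubharmonic $\psi_\delta$ that is controlled near $z^\delta$ but very negative on that boundary slice propagates the smallness of $h_\delta$ inward to a full $\delta$-ball about $z^\delta$, and then the Cauchy estimate yields $|h_\delta(z^\delta)-h_\delta(\tzd)|=o(\delta^{1/\eta})$. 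The technical heart is therefore the coupled construction of $\od$, the weight, and $g_\delta$ so that the anisotropic scales $(\de,\tau_\delta,\delta)$ are respected and pseudoconvexity is preserved, in the absence of an honest holomorphic support function.
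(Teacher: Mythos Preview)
Your outline follows McNeal's support-function strategy rather than the paper's, and the two places you flag as ``the crux'' and ``the main obstacle'' are precisely where it falls short.

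\medskip
\textbf{The peak function $g_\delta$.} You propose to bump $\Omega$ to a pseudoconvex $\Omega_\delta$ and run H\"ormander's $L^2$ method to produce a bounded holomorphic $g_\delta$ whose level set $\{g_\delta=v_\delta\}$ avoids the support of $\bar\partial\chi_\delta$ while $|g_\delta-v_\delta|\sim\delta$ near $z^\delta$ and varies by a definite fraction of $\delta$ along the normal segment. That is, in effect, a holomorphic support function at the projection of $z^\delta$ to $b\Omega$, and the paper explicitly works to \emph{avoid} any such hypothesis (cf.\ the Kohn--Nirenberg example). Nothing in your sketch indicates how to get the zero set of $g_\delta-v_\delta$ to lie entirely inside $\{\chi_\delta=0\}\cap\overline\Omega$; without this, $u^0_\delta$ is not even locally bounded and $\alpha_\delta\notin L^\infty$. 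The paper does something quite different: it fixes the $z_1$-slice at $z_1=d\,\delta^{1/\eta}$, reduces to a two-dimensional boundary point of type $\le m$, and invokes Catlin's $\mathbb{C}^2$ construction to obtain a \emph{bounded} holomorphic $f(\zeta_2,\zeta_3)$ on a slightly pushed-out slice with $\bigl|\tfrac{\partial f}{\partial\zeta_3}(0,-\tfrac{b\delta}{2})\bigr|\ge\tfrac{1}{2\delta}$. The Newton-polygon analysis of Section~2 is what guarantees this slice has uniformly controlled two-dimensional type and that $f$ extends (as a function independent of $\zeta_1$) to a full $\delta^{1/\eta}$-neighborhood in $\zeta_1$ while remaining holomorphic on $\Omega$ there. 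There is no $1/(g_\delta-v_\delta)$ at all; the test form is simply $\beta=\bar\partial\bigl(\phi_1\phi_2\phi_3\,f\bigr)$ with $\phi_1$ a cutoff in $\zeta_1$ at scale $c\,\delta^{1/\eta}$, so $\|\beta\|_{L^\infty}\lesssim\delta^{-1/\eta}$.

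\medskip
\textbf{The control of $h_\delta$.} You correctly identify that a bare $L^\infty$ bound on $h_\delta$ cannot give $|h_\delta(z^\delta)-h_\delta(\tilde z^\delta)|=o(\delta^{1/\eta})$, and propose a $\log|h_\delta|$-versus-barrier comparison. This is delicate and not carried out. The paper sidesteps the issue entirely with a mean-value trick in the $\zeta_1$-variable: one integrates $h=U_1-\phi_1\phi_2\phi_3 f$ over the circle $|\zeta_1-d\,\delta^{1/\eta}|=\tfrac{4}{5}c\,\delta^{1/\eta}$, which lies \emph{outside} the support of $\phi_1$, so $h=U_1$ there and the circular average is bounded by $\delta^{\epsilon}\|\beta\|_{L^\infty}\lesssim\delta^{\epsilon-1/\eta}$. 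But $h$ is holomorphic in $\zeta_1$, so the average equals the value at the center $\zeta_1=d\,\delta^{1/\eta}$, where $\phi_1=1$ and hence $h=U_1-f$. Comparing the two centers $(0,-\tfrac{b\delta}{2})$ and $(0,-b\delta)$ then yields
\[
1\ \lesssim\ \bigl|f(0,-b\delta)-f(0,-\tfrac{b\delta}{2})\bigr|\ \le\ \bigl|U_1\text{-increment}\bigr|+H_\delta\ \lesssim\ \delta^{\epsilon-1/\eta},
\]
which is the contradiction. No barrier, no maximum principle beyond the mean-value property, is needed.

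\medskip
In short: your two hard steps are exactly the ones the paper engineers away, first by slicing to $\mathbb{C}^2$ and quoting Catlin rather than building a support function in $\mathbb{C}^3$, and second by averaging over a $\zeta_1$-circle that kills the cutoff rather than estimating the holomorphic remainder directly.
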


\begin{cor} 
$\epsilon \leq \frac{1}{T_{\Omega}^{reg}(w_0)}.$
\end{cor}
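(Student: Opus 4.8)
The strategy is the standard "test form" argument used by Catlin, McNeal, and Krantz: assuming the Hölder estimate holds with exponent $\epsilon$, one constructs a family of $\dbar$-closed $(0,1)$-forms $\alpha_\delta$ with $\|\alpha_\delta\|_{L_\infty(\Omega)}$ uniformly bounded, whose canonical (or any) solutions $u_\delta$ are forced by the geometry of $V$ to oscillate on a scale of size $\delta$ in a way that is incompatible with a $\Lambda^\epsilon$ bound unless $\epsilon\le 1/\eta$. The first step is to normalize coordinates centered at $w_0$ so that $V$ is (a piece of) a coordinate line, say $\{w_2=w_3=0\}$ after a biholomorphic change of variables, and $R$ has the expansion dictated by pseudoconvexity, with the hypothesis $|R(w)|\le C|w-w_0|^\eta$ on $V$ giving control on how fast $R$ grows transverse to $V$. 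Because $V$ is smooth and one-dimensional in $\mathbb{C}^3$, after straightening, the two "transverse" variables $w_2,w_3$ are the directions along which the domain curves away from $bV$, and the contact hypothesis says this curvature is at worst of order $\eta$.

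The core construction: pick a point $p_\delta\in V$ at distance $\sim\delta$ from $w_0$, and an approximate analytic disc or polydisc-type region $P_\delta$ around $p_\delta$ whose size in the $V$-direction is $\sim\delta$ and whose size in the transverse directions is chosen so that $P_\delta$ is "barely" outside $\Omega$ — i.e. governed by $\delta^\eta$ via the contact estimate. On such a region one builds a holomorphic function $h_\delta$ (a bump-like analytic object, e.g. using $\exp$ of a suitable holomorphic polynomial modelled on the local defining function, as in the Hörmander/Catlin plurisubharmonic-weight technique or directly via a support function when one is available) that is $O(1)$ in modulus but whose restriction to $V$ has derivative of size $\sim\delta^{-1}$ transversally detectable through a contour integral. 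Multiplying by a cutoff $\chi(w/\delta^{1/\eta})$ and setting $\alpha_\delta=\dbar(\chi h_\delta)$, one checks $\|\alpha_\delta\|_{L_\infty}\le C$ uniformly in $\delta$ (the gradient of the cutoff, of size $\delta^{-1/\eta}$, is compensated by the size of the region and by $h_\delta$ being small where $\nabla\chi$ is supported — this is exactly where the exponent $\eta$ enters). Then $\chi h_\delta - u_\delta$ is holomorphic, so on the disc $V\cap P_\delta$ the difference is holomorphic, and a Cauchy-estimate / mean-value argument on that disc converts the $\Lambda^\epsilon$ bound on $u_\delta$ (hence on $\chi h_\delta-u_\delta$, after accounting for the known behavior of $\chi h_\delta$) into the inequality $\delta^{\epsilon}\cdot(\text{normalization}) \gtrsim 1$, i.e. $\delta^{\epsilon}\gtrsim \delta$ up to the $1/\eta$-scaling in the cutoff, forcing $\epsilon\le 1/\eta$ after letting $\delta\to 0$.

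The main obstacle is the construction of the holomorphic function $h_\delta$ with the right quantitative profile: one needs it holomorphic on a genuine neighborhood of $\overline{\Omega}\cap P_\delta$ (or at least on $\Omega$ near $P_\delta$), uniformly bounded, and yet with transverse oscillation on the correct scale $\delta$ relative to the $\delta^{1/\eta}$-sized support — getting these three scales to balance is the crux, and it is why the hypothesis is phrased with the single exponent $\eta$ governing $|R|$ on $V$ rather than a more refined jet of $R$. In $\mathbb{C}^3$ with $V$ merely smooth (not necessarily contained in $b\Omega$), one cannot always invoke a global holomorphic support function, so the construction should proceed by solving an auxiliary $\dbar$-problem with an $L^2$ weight (Hörmander) to correct an approximate holomorphic bump, and then estimating the correction in $L^\infty$ via the interior elliptic regularity of $\dbar$ on the scale $\delta^{1/\eta}$. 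Once $h_\delta$ is in hand, the remaining estimates — bounding $\|\alpha_\delta\|_{L_\infty}$ and extracting the contradiction from the $\Lambda^\epsilon$ bound via Cauchy's integral formula on $V\cap P_\delta$ — are routine scaling computations. The corollary then follows immediately: a regular finite type $m=T^{reg}_\Omega(w_0)$ provides, by definition, a smooth curve with $\nu_0(R\circ\gamma)=m$, hence a smooth $V$ with $|R(w)|\le C|w-w_0|^m$ along it, and applying the theorem with $\eta=m$ gives $\epsilon\le 1/m$.
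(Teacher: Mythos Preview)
The corollary itself is an immediate consequence of Theorem~1.1, and the paper states it without proof. Your final sentence captures exactly this: pick a smooth curve $\gamma$ realizing (or, if the supremum is not attained, approximating) $T_\Omega^{reg}(w_0)$, so that $|R(w)|\le C|w-w_0|^\eta$ along $V=\gamma(\mathbb{C})$ with $\eta=\nu_o(R\circ\gamma)$, and apply the theorem. That part is correct and matches the paper.

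The bulk of your proposal, however, is a sketch of the proof of Theorem~1.1, not of the corollary, and that sketch has a genuine scaling error. You assert that $\|\alpha_\delta\|_{L_\infty}\le C$ uniformly in $\delta$, claiming the cutoff gradient of size $\delta^{-1/\eta}$ is ``compensated by \dots $h_\delta$ being small where $\nabla\chi$ is supported.'' In the paper's construction (Section~5) the bounded holomorphic function $f$ is $O(1)$, not $o(1)$, on the support of $\nabla\chi$, so in fact $\|\beta\|_{L_\infty}\lesssim \delta^{-1/\eta}$ and this factor is \emph{not} cancelled. That $\delta^{-1/\eta}$ is precisely the source of the exponent: the H\"older hypothesis gives $\|U_1\|_{\Lambda^\epsilon}\lesssim\delta^{-1/\eta}$, the oscillation of $U_1$ over a $\zeta_3$-interval of length $\sim\delta$ is then $\lesssim\delta^{\epsilon-1/\eta}$, while the oscillation of $f$ over the same interval is $\gtrsim 1$ (from $|\partial f/\partial\zeta_3|\ge 1/(2\delta)$), and the mean-value argument on a circle of radius $\sim\delta^{1/\eta}$ in $\zeta_1$ yields $1\lesssim\delta^{\epsilon-1/\eta}$. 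Your version, with uniformly bounded $\alpha_\delta$ and the vague ``$\delta^\epsilon\gtrsim\delta$ up to the $1/\eta$-scaling,'' does not produce the right inequality.

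A second point of divergence worth noting: the paper does not build the holomorphic function directly in $\mathbb{C}^3$ via an ad hoc H\"ormander correction as you outline. Instead it fixes the slice $\{z_1=d\delta^{1/\eta}\}$, invokes Catlin's two-dimensional construction (Theorem~3.4) on that slice to obtain $f(\zeta_2,\zeta_3)$, and then proves (Proposition~3.6) that the nearby slices $|\zeta_1-d\delta^{1/\eta}|<c\delta^{1/\eta}$ are contained in a slightly pushed-out version $\Omega_{a,\delta}^{\epsilon_0}$ of the original slice domain, so that $f$ extends trivially as a function independent of $\zeta_1$. Controlling this inclusion requires the Newton-polygon combinatorics of Section~2 (the points $(p_\nu,q_\nu)$, the quantities $t_l$, and Lemma~2.6 guaranteeing mixed $z_2$-terms at each vertex), which is the real technical content your sketch elides.
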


\begin{rem} \
\begin{enumerate}[\normalfont i)]
   \item If $T_{BG}(w_0) = +\infty$, Krantz's result \cite {Kr} holds for any $m > 0$ and we conclude \\ $\epsilon \leq \frac{1}{m} \leq \frac{1}{\eta}$ for large $m$. Thus we can assume $T_{BG}(w_0) = m < \infty.$ Furthermore, since $\epsilon \leq \frac{1}{m}$, we can assume $m < \eta$ in the rest of this paper.
   \item Theorem \ref{main_theorem} improves the results by Krantz \cite{Kr} and  McNeal \cite{Mc} in the sense that we obtain sharp result since $\eta > m$ and do not assume the existence of a holomorphic support function. Note that the existence of holomorphic support function is satisfied for restricted domains (see the Kohn-Nirenberg Domain\cite{KN}).
 
\end{enumerate}

\end{rem}

To prove Theroem \ref{main_theorem}, the key components are the complete analysis of the local geometry near $w_0 \in b\Omega$ (Section \ref{special coordinates}) and the construction of a bounded holomorphic function with large nontangential derivative near the boundary point (Section \ref{Sec4}). In Section \ref{special coordinates}, we construct  special holomorphic coordinates about $w_0$ which are  adapted to both Bloom-Graham type and the order of contact of $V$. Then, we use the truncation technique developed in \cite{C} to deal with two dimensional slices of the domain. In Section \ref{Sec4}, by using the holomorphic function constructed by Catlin \cite{C2} on  two dimensional slice, we construct a bounded holomorphic function $f$ with a large nontangential  derivative defined locally up to the boundary in $\mathbb{C}^3$.  Finally, in Section \ref{Sec5}, we prove Theorem   \ref{main_theorem} by using the constructed holomorphic function.
\\

\section{Special coordinates}\label{special coordinates}

 Let $\Omega$ be a smoothly bounded pseudoconvex domain in $\mathbb C^3$ with a smooth defining function $R$ and let $w_0 \in b\Omega.$ Since $dR(w_0) \neq 0$, clearly we can assume that $\frac{\partial R}{\partial w_3}(w) \neq 0$ for all $w$ in a small neighborhood $W$ about $w_0.$ Furthermore, we may assume that $w_0 = 0.$ In Theorem \ref{special coordinate}, we construct a special coordinate near $w_0$ which changes the given smooth holomorphic curve into the $z_1$ axis and have a nonzero term along the $z_2$ axis when $z_1 = 0.$ 

\begin{thm} \label{special coordinate}

Let $\Omega = \{ w ; R(w)< 0  \}$ be a smoothly bounded pseudoconvex domain in $\mathbb{C}^3$ and let $T_{BG}(0)= m$, where $0 \in b \Omega  $. Suppose that there is a smooth  $1$-dimensional complex analytic variety  $V$ passing through $0$ such that for all $w \in V, w $ sufficiently close to $0$,  
    \begin{equation}
    |R(w)| \leq C|w|^{\eta},  \label{condition of order of contact}
    \end{equation}
where $\eta > 0$. Then there is a holomorphic coordinate system $(z_1, z_2, z_3)$ about $0$ with $w = \Psi(z)$ such that  

\begin{enumerate}[{\normalfont (i)}]
 
  \item  $r(z)= R \circ \Psi(z) = \mbox{\normalfont{Re}}{z_3} + \sum\limits_ {\substack{|\alpha|+|\beta| = m \\ |\alpha| > 0, |\beta| > 0}}^\eta a_{\alpha, \beta}{z'}^{\alpha} {\bar z}'^{\beta}  + \mathcal{O}(|z_3||z|+|z'|^{\eta +1}),$ \label{special coordinate 1}

  \item   $|r(t,0,0)| \lesssim |t|^{\eta}$ \label{changed order of contact}

  \item  $ {a_{0, \alpha_2, 0,\beta_2}} \neq 0$ with $\alpha_2+\beta_2 = m$ for some $\alpha_2 > 0, \beta_2 > 0,$ \label{nonzero term}
\end{enumerate}
where $z' = (z_1, z_2),$ and $z = (z_1, z_2, z_3).$

\end{thm}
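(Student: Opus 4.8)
The plan is to build the coordinate change in two stages: first straighten the curve $V$ onto the $z_1$-axis and normalize $R$ so that the linear part is $\mathrm{Re}\, z_3$, then perform a Bloom-Graham style normalization in the remaining $z'$ variables to remove pure and harmonic terms of order below $m$. To begin, since $V$ is a smooth $1$-dimensional variety through $0$, write a parametrization $\zeta \mapsto \gamma(\zeta)$ with $\gamma(0)=0$, $\gamma'(0)\neq 0$; after a linear change we may assume $\gamma'(0)$ points along the first coordinate axis, and then the map sending $(\zeta, z_2, z_3) \mapsto \gamma(\zeta) + (0,z_2,z_3)$ (composed with a suitable biholomorphism flattening $V$) gives a holomorphic coordinate system in which $V = \{z_2 = z_3 = 0\}$, i.e.\ the $z_1$-axis. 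Using $\frac{\partial R}{\partial w_3} \neq 0$ and the implicit function theorem, solve for the $w_3$-graph of $b\Omega$ and absorb the holomorphic part of the Taylor expansion into a new $z_3$, so that $r(z) = R\circ\Psi(z) = \mathrm{Re}\, z_3 + (\text{terms with no pure holomorphic or antiholomorphic monomials in } z' \text{ alone}) + \mathcal{O}(|z_3||z|)$. At this point (\ref{changed order of contact}) is automatic: restricting to the $z_1$-axis means restricting to $V$, and hypothesis (\ref{condition of order of contact}) says $|r(t,0,0)| = |R(w)| \leq C|w|^\eta \lesssim |t|^\eta$ since $|w| = |\Psi(t,0,0)| \approx |t|$ near $0$.

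Next, the Bloom-Graham normalization: because $T_{BG}(0) = m$, the order of vanishing along the best $2$-dimensional complex submanifold is exactly $m$, so after a further holomorphic change of coordinates in $z'$ (of the form $z' \mapsto z' + (\text{higher order})$, which does not disturb the $z_1$-axis being $V$ if done carefully — here one must check that straightening the extremal $2$-manifold is compatible with keeping $V$ on the axis, or rather that one can do the $z'$-normalization after observing $V$ lies inside candidate manifolds) one removes all pure terms ${z'}^\alpha$ and ${\bar z}'^\beta$ of weight $< m$, and more generally all harmonic terms, from the expansion of $r$. The result is that the lowest-order nonpurely-mixed terms appear at order $m$, giving the sum $\sum_{|\alpha|+|\beta|=m,\ |\alpha|>0,\ |\beta|>0} a_{\alpha,\beta} {z'}^\alpha {\bar z}'^\beta$ in (\ref{special coordinate 1}), with the remainder $\mathcal{O}(|z_3||z| + |z'|^{\eta+1})$ — the $|z'|^{\eta+1}$ truncation being legitimate because all information we need about the order of contact of $V$ lives at scale $\eta$, and by the Remark we may assume $m < \eta$ so the degree-$m$ sum genuinely appears before the truncation.

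For (\ref{nonzero term}), the point is that the mixed terms of order $m$ cannot all have $\alpha_1 > 0$ or $\beta_1 > 0$: if every nonzero $a_{\alpha,\beta}$ of order $m$ involved a positive power of $z_1$ or $\bar z_1$, then restricting to $z_1 = 0$ would kill the entire degree-$m$ part of $r(0, z_2, z_3)$, meaning the slice $\{z_1 = 0\}$ of $b\Omega$ has Bloom-Graham type $> m$ at $0$ — but this slice is a hyperplane section containing the extremal behavior, and combined with the fact that $V$ (the $z_1$-axis) already accounts for the contact in the $z_1$ direction, a type-additivity / semicontinuity argument forces a genuinely nonzero coefficient $a_{0,\alpha_2,0,\beta_2}$ with $\alpha_2 + \beta_2 = m$ and $\alpha_2, \beta_2 > 0$. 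I expect this last step, reconciling the curve-normalization with the Bloom-Graham normalization so that \emph{both} (\ref{changed order of contact}) and (\ref{nonzero term}) survive simultaneously, to be the main obstacle: one must track carefully which higher-order coordinate changes are still available after the $z_1$-axis has been fixed, and verify that the extremal $2$-manifold computing $T_{BG}(0)$ can be taken transverse to $V$ in the right sense so that its normalization does not move $V$ off the axis.
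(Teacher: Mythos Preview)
Your steps toward (i) and (ii) are essentially sound, though the paper orders them differently: it performs the Bloom--Graham normalization \emph{first} (iteratively removing pure terms in $z'$ up to order $\eta$ via $z_3$-shifts, Proposition~\ref{proposition 1}), and only afterwards straightens $V$. The compatibility you worry about is handled by a separate lemma (Lemma~\ref{parametrization lemma}): in the normalized coordinates the third component $\gamma_3$ of any parametrization of $V$ already vanishes to order $\geq\eta$, because a nonzero term $a_l t^l$ in $\gamma_3$ with $l<\eta$ would contribute a \emph{pure} term $\mathrm{Re}(a_l t^l)$ to $R\circ\gamma$ that cannot be cancelled by the mixed sum $\sum b_{\alpha,\beta}\gamma'^\alpha\bar\gamma'^\beta$, contradicting $|R\circ\gamma|\lesssim|t|^\eta$. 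Hence one may take $\gamma(t)=(t,\gamma_2(t),0)$, and the shear $\Psi_1(v)=(v_1,v_2+\gamma_2(v_1),v_3)$ straightens $V$ without disturbing the normal form. Your reverse ordering would in fact also work---the same vanishing shows that the pure-term-removing $z_3$-shift fixes the $z_1$-axis to order $\eta$---so this is not the real obstacle. (Note also that the normalization is a $z_3$-change, not a $z'$-change as you write, and one removes pure terms of all orders $\leq\eta$, not just those below $m$.)

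The genuine gap is your argument for (iii). Saying that ``the slice $\{z_1=0\}$ of $b\Omega$ has Bloom--Graham type $>m$'' only produces a \emph{one}-dimensional curve in $\{z_1=0\}$ with contact $>m$; but $T_{BG}(0)=m$ in $\mathbb{C}^3$ constrains the contact of \emph{two}-dimensional complex submanifolds, and there is no additivity or semicontinuity principle that promotes a high-contact curve in a slice to a high-contact $2$-manifold. After achieving (i) and (ii) it is perfectly possible that every degree-$m$ coefficient $a_{\alpha,\beta}$ has $\alpha_1+\beta_1>0$, and no contradiction arises. The paper's remedy is an additional linear shear in $z'$. Writing $A(u_1,u_2)=\sum_{|\alpha|+|\beta|=m,\,|\alpha|,|\beta|>0} b_{\alpha,\beta}u'^\alpha\bar u'^\beta$ for the degree-$m$ part, one shows (Lemma~\ref{z_2 direction}) that there exists $h\in\mathbb{C}$ with $\partial_z^j\partial_{\bar z}^k A(hz,z)\big|_0\neq 0$ for some $j,k>0$, $j+k=m$; if this failed for every $h$ one would have $A(hz,z)=P(h)z^m+\overline{P(h)z^m}$ identically, and substituting $h=u_1/u_2$, $z=u_2$ forces $P$ constant and $A$ pure, contradicting the existence of a mixed term at level $m$. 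The shear $\Psi_2(z)=(z_1+hz_2,z_2,z_3)$ then fixes the $z_1$-axis, preserves the normal form, and by construction yields $a_{0,\alpha_2,0,\beta_2}\neq 0$. This shear---not a type argument---is what makes (iii) work.
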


Note that $\eta$ is a positive integer since $V$ is a smooth 1-dimensional complex analytic variety. 
To construct the special coordinate in Theorem \ref{special coordinate}, we start  with a similar coordinate about $0$ in $\mathbb{C}^3$ as in Proposition 1.1 in \cite{C2}.

\begin{prop} \label{proposition 1}

Let $T_{BG}(0) = m$ and $\Omega = \{ w \in \mathbb{C}^3 ; R(w)< 0  \}$.
Then there is a holomorphic coordinate system  $  u = (u_1, u_2, u_3)$ with $ w = \widetilde{\Psi}(u)$ such that the function $\tilde{R}$, given by $\widetilde{R}(u) = R\circ \widetilde{\Psi}(u)$, satisfies 

 \begin{equation}\label{coordinate 1} 
   \widetilde{R}(u) = \mbox{{\normalfont Re}}{u_3} + \sum_ {\substack{ |\alpha|+|\beta| = m \\ |  \alpha| > 0, |\beta| > 0}}^\eta b_{\alpha, \beta} {u'}^{\alpha} {\bar u}'^{\beta} + \mathcal{O}(|u_3||u|+|u'|^{\eta +1}), 
 \end{equation}
where $u'=(u_1,u_2),$ and where $b_{\alpha, \beta} \neq 0$ for some $\alpha, \beta$ with $|\alpha| + |\beta| = m.$

\end{prop}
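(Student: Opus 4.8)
The plan is to obtain the coordinate system by the classical Bloom--Graham normalization (this is essentially Proposition~1.1 of \cite{C2}), and pseudoconvexity will play no role. First I would make a $\mathbb C$-linear change of coordinates so that $dR(0)$ points in the $\mathrm{Re}\,w_3$-direction, i.e.\ $\p R/\p w_3(0)=\tfrac12$ and $\p R/\p w_j(0)=0$ for $j=1,2$, giving $R(w)=\mathrm{Re}\,w_3+\mathcal O(|w|^2)$. All subsequent changes will be of the triangular form $w_3\mapsto w_3+h(w_1,w_2)$ with $h$ a holomorphic polynomial vanishing to order $\ge 2$ at $0$; these are biholomorphic near $0$, leave the linear normalization and the $w'$-coordinates untouched, and replace $\mathrm{Re}\,w_3$ by $\mathrm{Re}\,w_3-\mathrm{Re}\,h(w')$. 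Composing all of them with the linear map yields the single biholomorphism $w=\widetilde\Psi(u)$, and throughout one works with the order-$\eta$ Taylor jet of $R$ at $0$ while keeping a genuine smooth remainder.

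Next I would eliminate every pure (holomorphic or antiholomorphic) $w'$-monomial of degree $\le\eta$ by induction on the degree $k=2,\dots,\eta$, maintaining the form
\[
R(u)=\mathrm{Re}\,u_3+\sum_{\substack{2\le|\alpha|+|\beta|\le k\\|\alpha|,|\beta|\ge 1}}b_{\alpha\beta}\,{u'}^{\alpha}{\bar u'}^{\beta}+\mathcal O(|u_3||u|+|u'|^{k+1}).
\]
To pass from $k$ to $k+1$: in the homogeneous degree-$(k+1)$ part of the current remainder, every monomial containing a factor $u_3$ has total degree $\ge 2$ and hence, since $|u|<1$ near $0$, is $\mathcal O(|u_3||u|)$ and may be kept in the remainder; the remaining monomials are $u'$-only of degree $k+1$, and I split them into mixed ones, which are moved into the displayed sum, and pure ones, which are cancelled by a substitution $u_3\mapsto u_3-h(u')$ with $h$ of degree $k+1$ chosen so that $-\tfrac12 h$ kills the pure holomorphic part (the antiholomorphic part being killed automatically by the reality of $R$). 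One checks that, because $h=\mathcal O(|u'|^{k+1})$, this substitution perturbs the remainder only within its allowed size and creates no new $u'$-monomial of degree $\le k+1$. After the step $k=\eta$ one reaches the asserted form (\ref{coordinate 1}), whose displayed sum consists of mixed monomials of degrees between some minimal value $m_0$ and $\eta$ (with $m_0=+\infty$ if that sum is empty).

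It remains to identify $m_0$ with $m=T_{BG}(0)$, which is the crux. Any smooth complex hypersurface $M\ni 0$ with $\nu_o(R|_M)\ge 2$ satisfies $d(R|_M)(0)=0$, hence $T_0M\subseteq\{\mathrm{Re}\,u_3=0\}$; since $T_0M$ is a complex subspace it lies in $\{u_3=0\}$, and by dimension $T_0M=\{u_3=0\}$, so $M=\{u_3=\psi(u_1,u_2)\}$ for some holomorphic $\psi$ with $\psi(0)=0$, $d\psi(0)=0$. Restricting (\ref{coordinate 1}) gives
\[
R|_M(u')=\mathrm{Re}\,\psi(u')+\sum_{|\alpha|,|\beta|\ge 1}b_{\alpha\beta}\,{u'}^{\alpha}{\bar u'}^{\beta}+\mathcal O\!\big(|u'|^{\,\min(\nu_o(\psi),\,\eta)+1}\big),
\]
and since $\mathrm{Re}\,\psi$ is pluriharmonic --- so its homogeneous components have no bidegree-$(p,q)$ part with $p,q\ge 1$ --- it cannot cancel the lowest-degree homogeneous component of the sum, which is a nonzero purely mixed polynomial; a short case analysis on $\nu_o(\psi)$ versus $m_0$ then yields $\nu_o(R|_M)\le m_0$ for every such $M$. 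Taking $M=\{u_3=0\}$ shows this bound is attained (and rules out $m_0=+\infty$, since that would force contact $>\eta>m$), so $T_{BG}(0)=m_0$; as $T_{BG}(0)=m<\eta$, we conclude $m_0=m$: all mixed monomials of degree $<m$ vanish and at least one of degree exactly $m$ survives, which is the claim.

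The main obstacle is this last step --- relating the invariant $T_{BG}(0)$ to the coordinate-dependent normal form, and in particular controlling how the $\mathcal O(|u_3||u|)$ remainder in (\ref{coordinate 1}) behaves after restriction to a graph $\{u_3=\psi(u')\}$ so that it cannot interfere at the critical degree $m_0$ --- together with the (elementary) observation that pluriharmonic polynomials carry no mixed bidegree. The inductive bookkeeping of the remainder in the normalization step is routine but must be carried out carefully enough that the induction genuinely closes.
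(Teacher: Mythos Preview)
Your argument is correct and uses the same underlying mechanism as the paper --- removing the pure holomorphic (and, by reality, antiholomorphic) $u'$-monomials of each degree $\le\eta$ via triangular substitutions $u_3\mapsto u_3-h(u')$. The difference is one of packaging. The paper first quotes the Bloom--Graham characterization \cite{BG} of $T_{BG}(0)=m$ as a black box, which hands it the normal form up to order $m+1$ with the identification of $m$ already built in, and then merely iterates the pure-term removal from degree $m+1$ to $\eta$. You instead carry the normalization all the way to degree $\eta$ first and afterwards identify the minimal mixed degree $m_0$ with $T_{BG}(0)$ by the direct graph argument (pluriharmonicity of $\mathrm{Re}\,\psi$ and the case split on $\nu_o(\psi)$ versus $m_0$). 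Your route is more self-contained --- it essentially reproves the relevant direction of Bloom--Graham --- while the paper's is shorter by outsourcing that step; both are valid, and your case analysis handling the $\mathcal O(|u_3||u|)$ remainder under restriction is exactly what is needed to make the identification rigorous.
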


\begin{proof} 
Bloom and Graham \cite{BG} showed that $T_{BG}(w_0) = m$  if and only if there exists coordinate with $w_0$ equal to the origin in $\mathbb{C}^3$ and $b_{\alpha, \beta} \neq 0$ for some $\alpha, \beta$ with $|\alpha| + |\beta| = m$ such that  

\begin{equation*}
R(w) = \mbox{Re}{w_3} + \sum\limits_ {\substack{|\alpha|+|\beta| = m \\ |\alpha| > 0, |\beta| > 0}} b_{\alpha, \beta} {w'}^{\alpha} {\bar w}'^{\beta} + \mathcal{O}(|w_3||w|+|w'|^{m+1}),
\end{equation*}
where $\alpha = (\alpha_1, \alpha_2), \beta = (\beta_1, \beta_2) \ \mbox{and} \  w' = (w_1, w_2).$ 

Now assume that we have defined $\phi^{l} : \mathbb{C}^3 \rightarrow \mathbb{C}^3 $ so that there exist numbers $b_{\alpha, \beta}$ for $|\alpha|, |\beta|>0$ 
and $|\alpha|+|\beta| < l + 1$ with $l > m$ so that $ R_l = R\circ \phi^{l}$ satisfies

\begin{equation} \label{coordinate 2}
  R_l(v) = \mbox{{\normalfont {Re}}}{v_3} + \sum\limits_{\substack{|\alpha|+|\beta| = m \\ |\alpha| > 0, |\beta| > 0}}^{l} b_{\alpha, \beta}  
           {v'}^{\alpha}\bar{v}'^{\beta} +  \mathcal{O}(|v_3||v|+|v'|^{l+1}),  
\end{equation}
where $v' = (v_1, v_2)$ and $v = (v_1, v_2, v_3).$

If we define  $$\phi^{l+1} (u) = \biggl( u_1, u_2, u_3-\sum_{|\alpha|=l+1} {\frac{2}{\alpha!}}{\frac{\partial^{l+1} R_l}{\partial {v'}^\alpha}}(0){u'}^\alpha  \biggr),$$ then $R_{l+1} = {R_l}\circ \phi^{l+1} = R \circ \phi^l \circ \phi^{l+1}$ satisfies the similar form of (\ref{coordinate 2}) with $l$ replaced by $l+1$. Therefore, if we take $\widetilde{\Psi} = \phi^l \circ \cdots \circ \phi^{\eta}$, then $\widetilde{R} = R\circ \widetilde{\Psi}$ satisfies 
$$ \widetilde{R}(u) = \mbox{Re}{u_3} + \sum\limits_{\substack{  |\alpha|+|\beta| =m \\ |\alpha| > 0, |\beta| > 0}}^\eta b_{\alpha, \beta} {u'}^{\alpha} {\bar u}'^{\beta} + \mathcal{O}(|u_3||u|+|u'|^{\eta +1}).$$ 
\end{proof}

From now on, without loss of generality, we may assume that $\widetilde{R}$ is $R$ by Proposition \ref{proposition 1}.

\begin{lem} \label{parametrization lemma}
Let $\gamma = (\gamma_1,\gamma_2,\gamma_3) :\mathbb{C} \to V$ be a local parametrization of a one-dimensional smooth complex analytic variety $V$.
If   $|R(w)| \lesssim |w|^{\eta}$ for $w \in V$, then we can assume
    $\gamma =(\gamma_1,\gamma_2, 0) $  (i.e.,   $\gamma_3$ vanishes to order at least $\eta$).
\end{lem}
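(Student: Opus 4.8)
The plan is to use the defining-function normal form from Proposition 2.3 together with the pseudoconvexity (plurisubharmonicity of $R$ near $0$) to force the third component of a parametrization of $V$ to vanish to high order. After the normalization of Proposition 2.3 we have $R(w) = \operatorname{Re} w_3 + P(w') + \mathcal{O}(|w_3||w| + |w'|^{\eta+1})$, where $P$ collects the pure-weight terms of total degree between $m$ and $\eta$ in $(w',\bar w')$ with no pure holomorphic or antiholomorphic monomials. Write $\gamma(\zeta) = (\gamma_1(\zeta), \gamma_2(\zeta), \gamma_3(\zeta))$ with $\gamma(0)=0$; since $V$ is smooth we may assume $\gamma'(0)\neq 0$, and after a linear change among $\zeta$ (and possibly swapping which coordinate is ``$w_3$''—but that is fixed by $\partial R/\partial w_3\neq 0$) we may take $\nu_o(\gamma)$ achieved in the $w'$-block, i.e.\ $\gamma' = (\gamma_1,\gamma_2)$ does not vanish identically and $\gamma_3$ vanishes to some order $k$. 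Let $k = \nu_o(\gamma_3)$; the goal is to show $k \ge \eta$, after which we absorb $\gamma_3$ into a holomorphic change of the $w_3$-coordinate of the form $w_3 \mapsto w_3$ composed with reparametrization, reducing to $\gamma_3\equiv 0$.

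First I would substitute $w = \gamma(\zeta)$ into $R$ and use the hypothesis $|R(\gamma(\zeta))| \lesssim |\zeta|^\eta$ (rewriting the given bound $|R(w)|\lesssim |w|^\eta$ on $V$ in the parameter $\zeta$, legitimate because $\gamma$ is smooth with $\gamma'(0)\ne0$ so $|\gamma(\zeta)|\asymp|\zeta|$). This gives $\operatorname{Re}\gamma_3(\zeta) + P(\gamma'(\zeta)) + (\text{error}) = \mathcal{O}(|\zeta|^\eta)$. Next I would extract information by looking at the lowest-order terms. If $k < \eta$, then $\operatorname{Re}\gamma_3(\zeta)$ contributes a nonzero pluriharmonic term of degree $k$; since $P(\gamma'(\zeta))$ has no pure holomorphic/antiholomorphic part of any degree $< \eta$ that could cancel the harmonic monomials of $\operatorname{Re}\gamma_3$, and the error terms $\mathcal O(|w_3||w|)$ evaluated on $\gamma$ are $\mathcal O(|\zeta|^{k+1})$ hence of strictly higher order, comparing the degree-$k$ parts forces $\operatorname{Re}\gamma_3\equiv 0$ to that order—actually one must be careful: $\operatorname{Re}\gamma_3(\zeta)$ is harmonic in $\zeta$ and nonconstant, so it cannot be $\mathcal{O}(|\zeta|^\eta)$ together with the remaining terms unless its leading harmonic polynomial is killed, and nothing else in the expansion is harmonic of that degree. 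The conclusion is $k \ge \eta$, i.e.\ $\gamma_3$ vanishes to order at least $\eta$ at $0$.

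Finally, I would remove $\gamma_3$ altogether. Having shown $\nu_o(\gamma_3)\ge \eta$, I reparametrize: near $0$ the smooth variety $V$ is a graph, and one can choose a new holomorphic coordinate $\tilde\zeta$ (the leading part of $\gamma'$) so that $\gamma$ becomes $(\tilde\zeta,\, \mathcal{O}(\tilde\zeta^2),\, \mathcal{O}(\tilde\zeta^\eta))$; then a holomorphic change of the ambient $w_3$-variable of the form $w_3 \mapsto w_3 - (\text{holomorphic function of } w' \text{ agreeing with } \gamma_3 \text{ on } V)$—which is permissible since such a change preserves the normal form (\ref{coordinate 1}) up to relabeling the $\mathcal{O}$-terms, as $\gamma_3$ has order $\ge\eta$ and only alters terms of weight $\ge \eta$—makes the third component vanish identically on $V$. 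The main obstacle is the middle step: justifying cleanly that no combination of $P(\gamma'(\zeta))$ and the error terms can conspire to cancel the harmonic leading part of $\operatorname{Re}\gamma_3(\zeta)$ when $k<\eta$. This is where the specific structure of (\ref{coordinate 1})—namely that $P$ has \emph{no} pure holomorphic or antiholomorphic monomials and the mixed error is $\mathcal{O}(|w_3||w|+|w'|^{\eta+1})$—is essential, and I would phrase it as: the pluriharmonic part of $R\circ\gamma$ of degree $<\eta$ equals $\operatorname{Re}\gamma_3$ up to degree $\eta$, so the bound $|R\circ\gamma|\lesssim|\zeta|^\eta$ forces that pluriharmonic part to vanish below degree $\eta$.
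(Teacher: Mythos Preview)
Your approach is correct and essentially identical to the paper's: both substitute $\gamma$ into the normal form of Proposition~\ref{proposition 1} and observe that the pure (pluriharmonic) leading terms $\tfrac{a_l}{2}t^l + \tfrac{\bar a_l}{2}\bar t^l$ coming from $\operatorname{Re}\gamma_3$ cannot be cancelled by the mixed terms arising from $P(\gamma')$ (since $|\alpha|,|\beta|>0$ there and $\gamma'$ is holomorphic), forcing $l\ge\eta$. Your invocation of pseudoconvexity is unnecessary for this lemma, and your final ``removal'' step is more than the paper does---the paper simply reads ``$\gamma_3$ vanishes to order $\ge\eta$'' as ``we may take $\gamma_3=0$'' because everything downstream is carried out modulo $\mathcal O(|z'|^{\eta+1})$.
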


\begin{proof}We show $\gamma_3 $ vanishes to order at least $\eta$.
Since $\gamma(0)= 0$, we know $\gamma_3$ vanishes to some order $l$.
If we suppose  $l<\eta$,  then $\gamma_3(t)= a_l {t}^l+\mathcal{O}(t^{l+1})$, where $a_l \neq 0$. Then
\begin{align*}
        R(\gamma(t))&= \mbox{\normalfont Re}{\gamma_3} + \sum_ {\substack{|\alpha|+|\beta|=m  \\ |\alpha| > 0, |\beta| > 0}}^\eta b_{\alpha,\beta}\gamma_1^{\alpha_1} {{\bar \gamma}_1}^{\beta_1}\gamma_2^{\alpha_2} {{\bar \gamma}_2}^{\beta_2}+\mathcal{O}(|\gamma_3||\gamma|+|\gamma|^{\eta +1})  \\  
                    &= \biggl( {\frac{a_l}{2}}{t^l}+{\frac{\bar a_l}{2}}{\bar{t}^l} \biggr) +  \biggl( \sum_ {\substack{j + k = m  \\ j > 0,                                              k > 0}}^\eta c_{jk} t^j \bar{t}^k \biggr) + \mathcal{O}(|t|^{l+1}).
\end{align*}

Note that the first parenthesis consists of order $l$ pure terms and the summation part consists of the mixed terms. The first one is essentially $|t|^l$ with $l<\eta$, so if we want to improve on the order of contact, then some terms of the summation part must cancel it. However, it is impossible because the summation part has all mixed terms. This contradicts our assumption $|r \circ \gamma(t)| \lesssim |t|^\eta$. Therefore, $\gamma_3$ vanishes to order at least $\eta.$ 

\end{proof}

Let  $ A(u_1, u_2) = \sum\limits_{\substack {|\alpha|+|\beta| = m \\ |\alpha| > 0,  |\beta| > 0}} b_{\alpha, \beta} {u}'^{\alpha} {\bar u}'^{\beta}$ be the homogeneous polynomial part of order $m$ in the summation part of (\ref{coordinate 1}). In the following lemma, we show that there is some nonzero mixed term along some direction in $\mathbb{C}^2$.

\begin{lem} \label{z_2 direction} 
Consider $A(hz, z)$ for all $h, z \in \mathbb{C}.$ Then there is some $h \in \mathbb{C}$ such that
\begin{equation*}
\frac{\partial^m A}{{\partial z^{j}}{\partial {{\bar z}^k}}}(0,0) \neq 0, \ \text{ for }\
j, k >0.
\end{equation*} 
\end{lem}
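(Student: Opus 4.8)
The plan is to substitute the one-parameter family of complex lines $u_1 = hz$, $u_2 = z$ into the homogeneous polynomial $A$ and track a single monomial. Writing $\alpha = (\alpha_1,\alpha_2)$, $\beta = (\beta_1,\beta_2)$, the substitution gives
\[
A(hz,z) \;=\; \sum_{\substack{|\alpha|+|\beta|=m\\ |\alpha|>0,\,|\beta|>0}} b_{\alpha,\beta}\, h^{\alpha_1}\bar h^{\beta_1}\, z^{|\alpha|}\,\bar z^{|\beta|},
\]
so the coefficient of $z^j\bar z^k$ in $A(hz,z)$ is the function $c_{jk}(h) = \sum_{|\alpha|=j,\,|\beta|=k} b_{\alpha,\beta}\, h^{\alpha_1}\bar h^{\beta_1}$, and every pair $(j,k)$ that actually occurs satisfies $j,k>0$ because of the constraint $|\alpha|,|\beta|>0$ in the definition of $A$. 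Since $A(hz,z)$ is homogeneous of total degree $m$ in $(z,\bar z)$, one has $\dfrac{\partial^m A(hz,z)}{\partial z^{j}\partial \bar z^{k}}(0,0) = j!\,k!\,c_{jk}(h)$ whenever $j+k=m$; hence it suffices to produce one $h$ for which some $c_{jk}(h)\ne 0$, and any such $j,k$ will automatically be positive.

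By hypothesis there is a pair $(\alpha^*,\beta^*)$ with $b_{\alpha^*,\beta^*}\ne 0$; set $j = |\alpha^*|>0$ and $k = |\beta^*|>0$. Within the block $\{\,|\alpha|=j,\ |\beta|=k\,\}$ the multi-indices $\alpha$ and $\beta$ are determined by their first components via $\alpha_2 = j-\alpha_1$ and $\beta_2 = k-\beta_1$, so distinct terms of $c_{jk}$ carry distinct monomials $h^{\alpha_1}\bar h^{\beta_1}$. The monomials $\{h^a\bar h^b\}_{a,b\ge 0}$ are linearly independent over $\mathbb C$ as functions on $\mathbb C$, so $c_{jk}$ is a nonzero $\mathbb C$-linear combination of them (the coefficient of $h^{\alpha^*_1}\bar h^{\beta^*_1}$ is exactly $b_{\alpha^*,\beta^*}\ne 0$), hence $c_{jk}\not\equiv 0$ on $\mathbb C\cong\mathbb R^2$. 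Choosing any $h$ with $c_{jk}(h)\ne 0$ yields $\dfrac{\partial^m A(hz,z)}{\partial z^{j}\partial \bar z^{k}}(0,0)=j!\,k!\,c_{jk}(h)\ne 0$ with $j,k>0$, which is the assertion.

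There is essentially no serious obstacle: the only point requiring care is the bookkeeping that, after restricting to the family $u_1=hz$, $u_2=z$, the surviving coefficient $c_{jk}$ cannot be killed by cancellation — which follows from linear independence of $h^a\bar h^b$ together with the fact that within a fixed bidegree block each monomial comes from a \emph{unique} $b_{\alpha,\beta}$. Equivalently one may argue by contradiction: if $A(hz,z)\equiv 0$ for every $h\in\mathbb C$, then all $c_{jk}\equiv 0$, forcing every $b_{\alpha,\beta}=0$, contrary to the assumption that $A$ is a nonzero polynomial. The constraint $|\alpha|,|\beta|>0$ in $A$ is precisely what guarantees the surviving exponents $j=|\alpha|$, $k=|\beta|$ are both positive, i.e. that the nonzero term found is a genuine mixed term.
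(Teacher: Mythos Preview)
Your proof is correct and takes a genuinely different route from the paper. The paper argues by contradiction: assuming that for every $h$ the function $A(hz,z)$ has only pure terms, it writes $A(hz,z)=P(h)z^m+\overline{P(h)z^m}$ (using that $A$ is real-valued), then back-substitutes $h=u_1/u_2$, $z=u_2$ and observes that $A(u_1,u_2)$ can only be a polynomial if $P$ is constant, forcing $A(u_1,u_2)=a_{0,0}u_2^m+\bar a_{0,0}\bar u_2^m$, which contradicts the existence of a nonzero $b_{\alpha,\beta}$ with $|\alpha|,|\beta|>0$. Your argument is instead direct: you fix a nonzero $b_{\alpha^*,\beta^*}$, set $(j,k)=(|\alpha^*|,|\beta^*|)$, and observe that within this bidegree block the contributing coefficients attach to \emph{distinct} monomials $h^{\alpha_1}\bar h^{\beta_1}$, so linear independence of $\{h^a\bar h^b\}$ forces $c_{jk}\not\equiv 0$. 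Your approach is more elementary (it does not invoke the reality of $A$ or the back-substitution trick) and slightly more informative, since it pins down a specific pair $(j,k)$ that works; the paper's argument, on the other hand, makes the global structure of the obstruction transparent by showing that failure of the conclusion would collapse $A$ to a very special form.
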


\begin{proof}
Suppose that for all $h,$ $A(hz, z) = P(h)z^m + \overline{P(h)z^m}.$ Since $A(hz, z)$ is a polynomial in $z, {\bar z}, h$ and ${\bar h}$ and  $\frac{\partial^m A}{\partial z^m} = m! P(h)$, $P(h)$ is a polynomial.
Let $P(h) = \sum a_{j,k} h^j {\bar h}^k.$ Now, we have $A(hz, z)= \sum a_{j,k} h^j {\bar h}^k z^m + \sum {\bar a}_{j,k} {\bar h}^j {h}^k {\bar z}^m.$ Since $u_1 = hz$ and $u_2 = z,$ we have $h = \frac{u_1}{u_2}$ and $z = u_2$. Therefore, $A(u_1, u_2) = \sum a_{j,k} (\frac{u_1}{u_2})^j ({\frac{{\bar{u}_1}}{{\bar{u}_2}}})^k u_2^m + \sum \bar{a}_{j,k} ({\frac{\bar{u}_1}{{\bar u}_2}})^j ({\frac{u_1}{u_2}})^k {\bar u_2}^m.$ This forces $j$ and $k$ to be $0$ because $A(u_1, u_2)$ is a polynomial. Therefore, we have $A(hz, z)= a_{0,0}z^m + \bar{a}_{0, 0} \bar{z}^m.$ This means $A(u_1, u_2)= a_{0,0}{u_2} ^m + \bar{a}_{0, 0} {\bar u_2}^m.$
However, this contradicts $b_{\alpha, \beta} \neq 0 $  for some $\alpha, \beta$ with $|\alpha|, |\beta| > 0 $ and $|\alpha|+|\beta|=m $ in (\ref{coordinate 1}). 
\end{proof}

Now, we prove Theorem \ref{special coordinate}

\begin{proof}[Proof of Theorem \ref{special coordinate}]
We may assume ${\gamma_1}' (0) \neq 0$, and hence, after reparametrization, we can write $\gamma(t) = (t, \gamma_2(t), 0)$. Now, define 
\begin{equation*}
  u = \Psi_1 (v) = (v_1, v_2 + \gamma_2(v_1), v_3).
\end{equation*}
Since $\gamma_2(t) = \mathcal{O}(|t|)$ is holomorphic, (\ref{coordinate 1}) means 
\begin{align*}
  r_1 (v)   &= R \circ \Psi_1 (v)  
            = \mbox{Re}{v_3}+\sum_ {\substack{|\alpha|+|\beta|=m  \\ |\alpha| > 0, |\beta| > 0}}^\eta b_{\alpha, \beta}                                                             v_1^{\alpha_1}{\bar v}_1^{\beta_1}(v_2+\gamma_2(v_1))^{\alpha_2} {\overline {(v_2+\gamma_2(v_1)) }}^{\beta_2} 
              + E_1 (v) \\
            &=\mbox{Re}{v_3}+\sum_ {\substack{|\alpha|+|\beta|=m  \\ |\alpha| > 0, |\beta| > 0}}^\eta c_{\alpha, \beta}                                                              v_1^{\alpha_1}{\bar v_1}^{\beta_1}v_2^{\alpha_2} {\bar v_2}^{\beta_2}+ E_1(v), \  \text{ where }\ E_1(v) = \mathcal{O}(|v_3||v|+|v'|^{\eta +1}).
\end{align*}
Note that $T_{BG} = m$ means $c_{\alpha, \beta} \neq 0$ for some $\alpha, \beta > 0$ with $|\alpha|+|\beta| = m.$ Now, we fix $h$ in lemma \ref{z_2 direction} and  define 

\begin{equation*}
       v = \Psi_2 (z) = (z_1 + h z_2,  z_2, z_3).
\end{equation*}
Then, we have 
 \begin{align}  
    r(z) &= r_1 \circ \Psi_2(z) = R \circ \Psi_1 \circ \Psi_2 (z) \nonumber \\
         &= \mbox{Re}z_3 +\sum_ {\substack{|\alpha|+|\beta|=m  \\ |\alpha| > 0, |\beta| > 0}}^\eta c_{\alpha, \beta}                                                              (z_1+hz_2)^{\alpha_1}{\overline {(z_1+hz_2)}}^{\beta_1}z_2^{\alpha_2} {\bar z_2}^{\beta_2}+ E_1(z), \label{eqn2}  \\
         &=\mbox{Re}z_3 + \sum_ {\substack{|\alpha|+|\beta|=m  \\ |\alpha| > 0, |\beta| > 0}}^\eta a_{\alpha, \beta}                                                              z_1^{\alpha_1}{\bar z_1}^{\beta_1}z_2^{\alpha_2} {\bar z_2}^{\beta_2}+ E_1(z), \label{eqn3}
\end{align}
where $a_{\alpha, \beta}$ is a polynomial of $h$ and ${\bar h},$ and where $E_1(z) = \mathcal{O}(|z_3||z|+|z'|^{\eta +1}).$
Let $\Psi = \Psi_1 \circ \Psi_2.$ Then we have $r(z) = R \circ \Psi$ and (\ref{eqn3}) shows (\ref{special coordinate 1}) of Theorem \ref{special coordinate}. Furthermore, since $|r(t, 0, 0)| = |R \circ \Psi(t, 0, 0)|= |R(\gamma(t))| \lesssim |t|^\eta,$ this proves part (\ref{changed order of contact}). For (\ref{nonzero term}), if we consider $r(0, z_2, 0)$ and (\ref{eqn2}), we have
\begin{equation*} 
  r(0, z_2, 0) = A(hz_2, z_2) + \sum\limits_ {\substack{|\alpha|+|\beta| = m +1  \\  |\alpha| > 0,   |\beta| > 0}}^\eta                                                       c_{\alpha,\beta}{(hz_2)}^{\alpha_1}{\overline{(hz_2)}}^{\beta_1}z_2^{\alpha_2} {\bar z}_2^{\beta_2} +\mathcal{O}(|z_2|^{\eta +1}).
\end{equation*}
Then Lemma \ref{z_2 direction} means
\begin{equation*}
 \frac{\partial^m r }{{\partial {z_2}^{\alpha_2}}{\partial {\bar z}_2}^{\beta_2}}(0) = \frac{\partial^m A }{{\partial {z_2}^{\alpha_2}}{\partial {\bar z}_2}^{\beta_2}}(0,0) \neq 0
\end{equation*} 
for some $\alpha_2, \beta_2 > 0$ with $\alpha_2 + \beta_2 = m .$
Since $\frac{\partial^m r }{{\partial {z_2}^{\alpha_2}}{\partial {\bar z}_2}^{\beta_2}}(0) = \alpha_2 ! \beta_2 ! a_{0, \alpha_2, 0, \beta_2}$ in (\ref{eqn3}), this completes the proof. 
\end{proof}

Catlin \cite{C2} constructed a bounded holomorphic funtion with a large derivative near a finite type point in the boundary of pseudoconvex domain in $\mathbb{C}^2$.  To construct a similar function in $\mathbb{C}^3$, we will use the function constructed by Catlin. In order to achieve this goal, as a first step, we need to consider two dimensional slice with respect to the $z_2$ and $z_3$ variables when $z_1$ is fixed at some point. For this, we consider the representative terms in the summation part of (\ref{special coordinate 1}) of Theorem \ref{special coordinate}.

Let 
\begin{align*}
 \Gamma &= \{(\alpha, \beta);  a_{\alpha, \beta} \neq 0, m \leq |\alpha|+|\beta| \leq \eta \ \mbox{and} \ |\alpha|, |\beta| > 0 \} \\
      S &= \{(p, q); \alpha_1 + \beta_1 = p, \alpha_2 + \beta_2 = q \ \mbox{for some} \ (\alpha, \beta) \in \Gamma  \} \cup  \{(\eta, 0)\}.
\end{align*}
Then there is an positive integer $N$ such that $(p_\nu, q_\nu) \in S$ for $\nu = 0,\cdots, N $ and $\eta_\nu, \lambda_\nu > 0$ for $\nu = 1, \cdots, N$ satisfying
 
\begin{enumerate}[(1)]
	\item $(p_0, q_0)=(\eta, 0),  (p_{N}, q_{N})= (0, m) , \lambda_N = m, \eta_1 = \eta,$ \label{condition1}
	\item $p_0 > p_1 > \cdots > p_{N}$ and $ q_0 < q_1 < \cdots < q_{N},$ \label{condition2}
	\item $\lambda_1 < \lambda_2 <\cdots < \lambda_N$ and  $\eta_1 > \eta_2 > \cdots > \eta_N,$ \label{condition3}
	\item $\frac{p_{\nu-1}}{\eta_\nu} + \frac{q_{{\nu-1}}}{\lambda_\nu} = 1$ and $\frac{p_{\nu}}{\eta_\nu} + \frac{q_{\nu}}{\lambda_\nu}=1$ \label{condition4} and
	\item $a_{\alpha, \beta}=0$ if $\frac{\alpha_1 + \beta_1}{\eta_\nu}+ \frac{\alpha_2 + \beta_2}{\lambda_\nu} < 1$ for each $\nu = 1,\cdots,N.$\label{condition5}
\end{enumerate}

Note that if $1 \leq l \leq m,$ then $q_{\nu - 1} < l \leq q_\nu$ for some $\nu = 1, \cdots, N.$
Let $L_\nu$ be the line segment from $(p_{\nu - 1}, q_{\nu - 1})$ to $(p_{\nu}, q_{\nu})$ for each $\nu = 1, \cdots, N$ and set $L =\ L_1 \cup L_2 \cup \cdots \cup L_{N}.$ Define

\begin{itemize}
	\item $\Gamma_L = \{(\alpha, \beta) \in \Gamma; \alpha+\beta \in L \}.$
	\item $t_l = \begin{cases}
	             \eta & \text{if $l = 0$} \\
	             \eta_\nu \biggl(1 - \frac{l}{\lambda_\nu} \biggr) & \text{if  $q_{\nu-1} < l \leq q_\nu$ for some $\nu$.}  
	            \end{cases}$
\end{itemize}
Note that $(p_{\nu-1}, q_{\nu-1}), (t_l, l)$ and $(p_\nu, q_\nu)$ are collinear points in the first quadrant of the plane and $\eta_\nu$ and $\lambda_\nu$ are  the $x$, $y$-intercepts of the line. \\

Now, we want to show that for each element $(p_\nu, q_\nu)$ with $\nu = 1, \cdots, N$, there is some $(\alpha, \beta)$ allowing a mixed term in the $z_2$ variable. To show this, we need to use a variant of the notations and the results from  Lemma 4.1 and Proposition 4.4 in \cite{C}.
For $t$ with $0<t<1$ and each $\nu = 1, \cdots, N$, define a family of a truncation map $H_t^\nu : \mathbb{C}^3 \rightarrow \mathbb{C}^3$ by 
$$H_t^\nu (z_1,z_2,z_3) = (t^{(1/{\eta_\nu})}z_1 ,t^{(1/{\lambda_\nu})}z_2 ,t z_3 ) .$$
Set $r_t^\nu = t^{-1}({H_t^\nu}^{*} r)$ and ${\widetilde r}^\nu = \lim_{t \to 0}r_t^\nu .$ Note that $${\widetilde r}^\nu(z)  = \mbox{Re} {z_3} + \sum_ {\substack{\frac{\alpha_1+\beta_1}{\eta_\nu} + \frac{\alpha_2+\beta_2}{\lambda_\nu}=1 \\ (\alpha, \beta) \in \Gamma_L}} a_{\alpha_1,\alpha_2 ,\beta_1,\beta_2} z_1^{\alpha_1} {\bar {z_1}}^{\beta_1}z_2^{\alpha_2} {\bar {z_2}}^{\beta_2}.$$

Let $r$ and ${\widetilde r}^\nu$ be a defining function of $\Omega$ and ${\widetilde \Omega}_\nu$ near $0$. Observe that if $\Omega$ is pseudoconvex, then ${\widetilde \Omega}_\nu$ must also be pseudoconvex, for ${\widetilde r}^\nu$ equals the limit in the $C^{\infty}$-topology of $r_t^\nu$, which for each $t$ is the defining function of a pseudoconvex domain.
For a fixed $(z_1, z_2)$, choose $z_3$ so that ${\widetilde r}^\nu(z_1,z_2,z_3)=0$.  Let that point $z$. 
Since the Hessian of ${\widetilde r}^\nu$ is nonnegative  in the tangential directions at $z$, it follows that the Hessian of ${\widetilde r}^\nu$
is nonnegative at $z$. This means ${\widetilde r}^\nu$ is plurisubharmonic.

\begin{lem}  
\label{existence of mixed term in z_2}
Consider $r$ in (\ref{special coordinate 1}) of Theorem \ref{special coordinate}. Then for each $\nu = 1, \cdots, N,$ there is $(\alpha^\nu, \beta^\nu) \in \Gamma_L$ with $\alpha_2^\nu > 0, \beta_2^\nu > 0$
and  $\alpha^\nu + \beta^\nu = (p_\nu, q_\nu).$

\end{lem}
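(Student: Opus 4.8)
The plan is to argue by induction on $\nu$, using the pseudoconvexity of the truncated domains $\widetilde\Omega_\nu$ together with Lemma~\ref{z_2 direction}. Fix $\nu \in \{1,\dots,N\}$. The point $(p_\nu, q_\nu)$ lies in $S$, so by definition of $S$ there is at least one $(\alpha,\beta)\in\Gamma$ with $\alpha_1+\beta_1 = p_\nu$, $\alpha_2+\beta_2 = q_\nu$ (for $\nu=N$ this is forced by condition~(\ref{condition1}), since $(0,m)$ cannot come from $(\eta,0)$, and $q_N = m > 0$). Since $(\alpha,\beta)\in\Gamma$ we already have $|\alpha|>0$ and $|\beta|>0$; what is not automatic is that we can choose such a representative with $\alpha_2>0$ \emph{and} $\beta_2>0$, i.e. a term that is genuinely mixed in the $z_2$ variable alone. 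The cases $q_\nu = 0$ do not occur for $\nu\geq 1$ because condition~(\ref{condition2}) gives $q_\nu > q_0 = 0$; the possible failure is that every representative on the level line has $\beta_2 = 0$ (all $\bar z_2$-powers absent) or every representative has $\alpha_2=0$.

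The key step is to rule this out via the plurisubharmonicity of $\widetilde r^\nu$. Consider the slice obtained by restricting $\widetilde r^\nu$ to $z_1 = hz_2$ for the parameter $h$ supplied by Lemma~\ref{z_2 direction}, or more directly examine the weighted-homogeneous polynomial
$$P_\nu(z_1,z_2) = \sum_{\substack{\frac{\alpha_1+\beta_1}{\eta_\nu}+\frac{\alpha_2+\beta_2}{\lambda_\nu}=1 \\ (\alpha,\beta)\in\Gamma_L}} a_{\alpha,\beta}\, z_1^{\alpha_1}\bar z_1^{\beta_1} z_2^{\alpha_2}\bar z_2^{\beta_2},$$
which is the non-harmonic part of $\widetilde r^\nu$. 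Suppose for contradiction that on the sub-line corresponding to $(p_\nu,q_\nu)$ every nonzero coefficient has $\beta_2 = 0$ (the case $\alpha_2 = 0$ is symmetric by conjugation). I would then show that $\widetilde r^\nu$ restricted to an appropriate one-complex-dimensional affine slice through $0$ reduces to a sum of the form $\mathrm{Re}\,z_3 + Q(z_1)z_2^{q_\nu} + \overline{Q(z_1) z_2^{q_\nu}} + (\text{lower order in } z_2)$, where $Q$ depends only on $z_1,\bar z_1$; choosing $z_1$ to make $Q(z_1)\neq 0$ and computing the complex Hessian in the $z_2$-direction on the hypersurface $\widetilde r^\nu = 0$ produces a term that cannot be of the correct sign, contradicting plurisubharmonicity of $\widetilde\Omega_\nu$. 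This is exactly the mechanism of Lemma~\ref{z_2 direction}, transported to the truncated setting: a purely $z_2^{q_\nu}$ (holomorphic) term with no matching $\bar z_2$ companion violates the nonnegativity of the Levi form. One must be careful that the lower-order-in-$z_2$ terms ($q < q_\nu$) do not interfere — but on the truncated domain these sit on strictly lower levels of the Newton diagram and, after a further rescaling isolating the $(p_\nu,q_\nu)$ piece, drop out.

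Concretely, I would proceed as follows. First, observe from condition~(\ref{condition4}) that $(p_{\nu-1},q_{\nu-1})$ and $(p_\nu,q_\nu)$ both lie on the line $\frac{p}{\eta_\nu}+\frac{q}{\lambda_\nu}=1$, so $\Gamma_L$ contains monomials with $\alpha_2+\beta_2 = q_\nu$ and $\alpha_2+\beta_2 = q_{\nu-1}$; in particular the endpoint $(p_\nu,q_\nu)$ is represented. Second, perform the substitution $z_1 = h z_2$ (using the $h$ of Lemma~\ref{z_2 direction}) in $P_\nu$; this collapses every monomial of bidegree $(p,q)$ on the line to a multiple of $z_2^{p+q}\bar z_2^{\text{?}}$ — more care is needed here since the substitution mixes holomorphic and antiholomorphic parts, so instead I would mimic the polynomial argument in Lemma~\ref{z_2 direction} directly on $P_\nu$: if all representatives of $(p_\nu,q_\nu)$ had $\beta_2=0$, then the corresponding slice of $\widetilde r^\nu$ would force a holomorphic monomial in $z_2$ with no conjugate partner, and a Hessian computation at a boundary point (exactly as the contradiction at the end of the proof of Lemma~\ref{z_2 direction}, now using that $\widetilde\Omega_\nu$ is pseudoconvex) gives the contradiction. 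Third, by symmetry the same argument rules out $\alpha_2 = 0$ for all representatives. Hence some $(\alpha^\nu,\beta^\nu)\in\Gamma_L$ with $\alpha^\nu+\beta^\nu=(p_\nu,q_\nu)$ has $\alpha_2^\nu>0$ and $\beta_2^\nu>0$, as claimed.

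The main obstacle I anticipate is making the Hessian/plurisubharmonicity computation clean in the presence of the lower $z_2$-degree terms and the full $z_1,\bar z_1$ dependence of the coefficients: one needs to choose the slice and the auxiliary rescaling so that the term witnessing the contradiction is not accidentally cancelled by other monomials on the same weighted level. The device for handling this is precisely the truncation maps $H_t^\nu$ already introduced — applying a secondary truncation (or simply reading off the coefficient of the extreme monomial $z_2^{q_\nu}$ times the appropriate $z_1$-power) isolates the offending term, and then the argument is a verbatim analogue of Lemma~\ref{z_2 direction} with "pseudoconvex" replaced by "$\widetilde\Omega_\nu$ pseudoconvex," which we have already established above.
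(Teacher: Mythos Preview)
Your overall strategy matches the paper's: isolate the single vertex $(p_\nu,q_\nu)$ by a secondary truncation, then derive a contradiction from plurisubharmonicity. The paper does exactly this, applying $H_t^{\nu+1}$ to $\widetilde r^{\,\nu}$ to obtain
\[
\widetilde{(\widetilde r^{\,\nu})}^{\nu+1}
\;=\;\mathrm{Re}\,z_3 \;+\!\!\sum_{\substack{\alpha+\beta=(p_\nu,q_\nu)\\ (\alpha,\beta)\in\Gamma_L}}\! a_{\alpha,\beta}\,z_1^{\alpha_1}\bar z_1^{\beta_1}z_2^{\alpha_2}\bar z_2^{\beta_2},
\]
precisely the ``further rescaling isolating the $(p_\nu,q_\nu)$ piece'' you anticipate.

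There is, however, a genuine gap in your contradiction mechanism. You claim that ``computing the complex Hessian in the $z_2$-direction \ldots\ produces a term that cannot be of the correct sign,'' and you repeatedly invoke Lemma~\ref{z_2 direction} as the template. Neither is correct. First, Lemma~\ref{z_2 direction} plays no role here: its proof is purely algebraic (no Hessian, no pseudoconvexity); it was already consumed in establishing Theorem~\ref{special coordinate}(\ref{nonzero term}), which settles only the case $\nu=N$. Second, if no term at $(p_\nu,q_\nu)$ is mixed in $z_2$, the isolated polynomial has the real form $2\,\mathrm{Re}\bigl(P_{q_\nu}(z_1)\,z_2^{q_\nu}\bigr)$, and its $(z_2,\bar z_2)$ Hessian entry is \emph{zero}, not of the wrong sign. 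The contradiction the paper uses comes from the full $2\times 2$ complex Hessian: because the $(2,\bar 2)$ entry vanishes, the determinant equals $-\bigl|q_\nu\,(\partial_{\bar z_1}P_{q_\nu})\,z_2^{\,q_\nu-1}\bigr|^2$, and plurisubharmonicity forces this to be nonnegative, hence $\partial_{\bar z_1}P_{q_\nu}\equiv 0$. But every monomial in $P_{q_\nu}$ has $\beta_1>0$ (since $|\beta|>0$ and, by hypothesis, $\beta_2=0$), so $P_{q_\nu}$ cannot be holomorphic in $z_1$. That is the contradiction. Your proposed slicing $z_1=hz_2$ and the appeal to Lemma~\ref{z_2 direction} are therefore detours; what is actually needed is this direct two-variable Hessian determinant computation on the doubly truncated function, using that $(\alpha,\beta)\in\Gamma$ forces $\beta_1>0$ once $\beta_2=0$.
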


\begin{proof}
Consider ${\widetilde r}^\nu$, which is plurisubharmonic. Now, consider $${\widetilde {({\widetilde r}^\nu)}}^{\nu+1} = \lim\limits_{t \to 0} t^{-1}({H_t^{\nu+1}}^* {\widetilde r^\nu }).$$  This is also  plurisubharmonic.  Since $(p_\nu, q_\nu)$ is the unique point with $L_\nu \cap L_{\nu + 1}$ (i.e., $\frac{p_{\nu}}{\eta_\nu} + \frac{q_{{\nu}}}{\lambda_\nu} = 1$ and $\frac{p_{\nu}}{\eta_{\nu+1}} + \frac{q_{\nu}}{\lambda_{\nu+1}}=1$), we have 
 
\begin{equation}\label{formoftrucated}
 {\widetilde {({\widetilde r}^\nu)}}^{\nu+1} = \mbox{Re}{z_3} +  \sum_ {\substack{\alpha + \beta = (p_\nu, q_\nu) \\ (\alpha, \beta) \in \Gamma_L}}                                 a_{\alpha_1,\alpha_2 ,\beta_1,\beta_2} z_1^{\alpha_1} {\bar z}_1^{\beta_1}z_2^{\alpha_2} {\bar z}_2^{\beta_2}. 
\end{equation}
In particular, $(\alpha, \beta) \in \Gamma_L$ means $|\alpha|, |\beta| > 0.$
Suppose that ${\widetilde {({\widetilde r}^\nu)}}^{\nu+1}$ has no terms with both $\alpha_2 > 0$ and $\beta_2>0$ in (\ref{formoftrucated}) (i.e., no mixed terms in $z_2$ variable). Thus
$${\widetilde{(\widetilde{r^\nu})}}^{\nu+1}  = \mbox{Re}{z_3} + P_{q_\nu}(z_1){z_2}^{q_\nu} +\overline{P_{q_\nu}(z_1){z_2}^{q_\nu}}$$   
where $P_{q_\nu}(z_1) = \sum\limits_{\alpha_1 +\beta_1 = p_\nu} c_{\alpha_i,\beta_i} {z_1}^{\alpha_1}  {\bar z_1}^{\beta_1}$ with $\beta_1 > 0$.  
By the plurisubharmonicity of ${\widetilde {({\widetilde r}^\nu)}}^{\nu+1}$, 
            $${\widetilde {({\widetilde r}^\nu)}}^{\nu+1}_{11} {\widetilde {({\widetilde r}^\nu)}}^{\nu+1}_{22}- {\widetilde {({\widetilde r}^\nu)}}^{\nu+1}_{12}{\widetilde {({\widetilde r}^\nu)}}^{\nu+1}_{21} = - \lvert {q_\nu} \frac{\partial {P_{q_\nu}}}{\partial{\bar z_1}}(z_1) {z_2}^{q_\nu -1}   \rvert^2 \geq 0,$$
where  ${{\widetilde{(\widetilde{r^\nu})}}^{\nu+1}}_{ij} = \frac{\partial^{2}{\widetilde{({\widetilde r}^\nu)}}^{\nu+1}}{{\partial z_i}{\partial \bar{z_j}}} $ for $i, j = 1, 2.$           
Therefore, we have $\frac{\partial {P_{q_\nu}}}{\partial{\bar{z_1}}}(z_1) = 0.$ This means $P_{q_\nu} (z_1)$ is holomorphic. 
This contradicts the fact that $P_{q_\nu}(z_1) = \sum\limits_{\alpha_1 +\beta_1 = p_\nu} c_{\alpha_i,\beta_i} {z_1}^{\alpha_1}  {{\bar {z_1}}^{\beta_1}}$ with $\beta_1 > 0$.  
\end{proof}

Now, we define these special terms with respect to the $z_2$ variable. Let
\begin{equation*}
\Lambda = \{(\alpha, \beta) \in \Gamma_L ; \alpha+\beta = (p_\nu, q_\nu), \alpha_2 > 0, \beta_2 > 0, \nu = 1, \cdots, N \}.
\end{equation*}  
Then we represent the expression of $r$ in terms of these terms. \\

\begin{prop}\label{final expression of r}
 The defining function $r$  can be expressed as 
\begin{equation} \label{repre of r}
 r(z) = \mbox{\normalfont Re}{z_3} + \sum_ {\Gamma_L - \Lambda} a_{\alpha, \beta} {z'}^{\alpha} {\bar z}'^{\beta} + {\sum_{\nu = 1}^N}\sum_{\substack{\alpha_2 + \beta_2= q_\nu  \\ \alpha_2 > 0, \beta_2 > 0}}                   M_{\alpha_2, \beta_2}(z_1) {z_2}^{\alpha_2} {\bar z}_2^{\beta_2}+E_2(z),
\end{equation} 
where $M_{\alpha_2, \beta_2}(z_1) = \sum\limits_{\alpha_1+\beta_1 = p_\nu} a_{\alpha, \beta} z_1^{\alpha_1} \bar{z}_1^{\beta_1}$  and  $E_2(z) = \mathcal{O}(|z_3||z|+\sum_{\nu = 1}^N \sum_{l = q_{\nu - 1}}^{q_\nu} |z_1|^{[t_l]+1}|z_2|^l+|z_2|^{m+1}).$

\end{prop}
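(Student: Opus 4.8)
The plan is to regroup the finite double sum in (\ref{special coordinate 1}) of Theorem \ref{special coordinate} according to where the exponent point $\alpha+\beta$ sits relative to the Newton polygon $L$, and to absorb into the error everything lying strictly above $L$. First I would split the index set as $\Gamma=(\Gamma_L\setminus\Lambda)\sqcup\Lambda\sqcup(\Gamma\setminus\Gamma_L)$. The terms indexed by $\Gamma_L\setminus\Lambda$ are kept verbatim and form the middle sum of (\ref{repre of r}). For the terms indexed by $\Lambda$, recall that their exponents lie on the vertices $(p_\nu,q_\nu)$ and are mixed in $z_2$; for a fixed $\nu$ and a fixed pair $(\alpha_2,\beta_2)$ with $\alpha_2+\beta_2=q_\nu$ and $\alpha_2,\beta_2>0$, collecting all admissible $(\alpha_1,\beta_1)$, which necessarily satisfy $\alpha_1+\beta_1=p_\nu$, produces exactly $M_{\alpha_2,\beta_2}(z_1)\,z_2^{\alpha_2}{\bar z}_2^{\beta_2}$. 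Since the $q_\nu$ are strictly increasing by (\ref{condition2}), the contributions for distinct $\nu$ are disjoint, so summing over $\nu=1,\dots,N$ recovers $\sum_{\Lambda}a_{\alpha,\beta}{z'}^{\alpha}{\bar z}'^{\beta}$. Here I would record the elementary but essential fact that every vertex satisfies $m\le p_\nu+q_\nu\le\eta$: this sum is $\eta$ at $\nu=0$, it is $m$ at $\nu=N$, and for the remaining $\nu$ Lemma \ref{existence of mixed term in z_2} realizes $(p_\nu,q_\nu)$ as $\alpha^\nu+\beta^\nu$ for some $(\alpha^\nu,\beta^\nu)\in\Gamma$, so that $p_\nu+q_\nu=|\alpha^\nu|+|\beta^\nu|\in[m,\eta]$.

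Next I would show that the remaining terms, namely those indexed by $\Gamma\setminus\Gamma_L$ together with the original remainder $\mathcal{O}(|z_3||z|+|z'|^{\eta+1})$, are dominated near $0$ by the claimed $E_2$. Fix $(\alpha,\beta)\in\Gamma\setminus\Gamma_L$ and put $p=\alpha_1+\beta_1$, $q=\alpha_2+\beta_2$, so that $|{z'}^{\alpha}{\bar z}'^{\beta}|=|z_1|^p|z_2|^q$. If $q\ge m+1$, this monomial is $\lesssim|z_2|^{m+1}$ near $0$. If $q\le m$, write $q_{\nu-1}<q\le q_\nu$; condition (\ref{condition5}) applied to the line $L_\nu$ gives $\frac{p}{\eta_\nu}+\frac{q}{\lambda_\nu}\ge1$, i.e., $p\ge t_q$, and $p=t_q$ is impossible, since then $(p,q)=(t_q,q)$ would lie on $L$, contradicting $(\alpha,\beta)\notin\Gamma_L$; hence $p>t_q$, and since $p$ is an integer, $p\ge[t_q]+1$, so the monomial is $\lesssim|z_1|^{[t_q]+1}|z_2|^q$, a summand of $E_2$. (The case $q=0$ cannot occur: then $p=|\alpha|+|\beta|\le\eta=t_0$ would again contradict $p>t_0$.) For the original remainder, expand $|z'|^{\eta+1}\lesssim\sum_{a+b=\eta+1}|z_1|^a|z_2|^b$; a monomial with $b\ge m+1$ is $\lesssim|z_2|^{m+1}$, while for $b\le m$ the inequality $[t_b]+b\le t_b+b\le\eta$, which holds because $(t_b,b)$ lies on a segment of $L$ whose endpoints have coordinate sums at most $\eta$, yields $a=\eta+1-b\ge[t_b]+1$, so the monomial is $\lesssim|z_1|^{[t_b]+1}|z_2|^b$. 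Assembling the kept, regrouped, and absorbed contributions gives (\ref{repre of r}).

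I expect the only genuinely delicate point to be this last step, verifying that the off-polygon monomials and the top-order remainder $|z'|^{\eta+1}$ are captured exactly by the prescribed shape of $E_2$. It rests on two geometric observations: (a) condition (\ref{condition5}) forbids any nonzero coefficient strictly below $L$, so the exponent of a nonzero off-$L$ term has first coordinate strictly larger than the abscissa $t_q$ of $L$ at that height; and (b) every vertex of $L$, including the artificially adjoined $(\eta,0)$, has coordinate sum at most $\eta$, so that the piecewise-linear boundary stays weakly below the line $x+y=\eta$. Everything else is the rearrangement of a finite sum, so I would keep those verifications brief.
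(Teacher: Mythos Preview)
Your argument is correct and follows essentially the same route as the paper: split $\Gamma$ into $\Gamma_L$ and $\Gamma\setminus\Gamma_L$, use condition (\ref{condition5}) to force $p>t_q$ (hence $p\ge[t_q]+1$) for the off-polygon terms, and then decompose $\Gamma_L=(\Gamma_L\setminus\Lambda)\cup\Lambda$ to extract the $M_{\alpha_2,\beta_2}$ coefficients. The one place where the paper is more economical is the remainder $|z'|^{\eta+1}$: rather than expanding binomially and invoking $t_b+b\le\eta$ via the vertex bound $p_\nu+q_\nu\le\eta$, the paper simply uses $|z'|^{\eta+1}\approx|z_1|^{\eta+1}+|z_2|^{\eta+1}$ and observes that $|z_1|^{\eta+1}=|z_1|^{[t_0]+1}|z_2|^0$ is already a summand of $E_2$ while $|z_2|^{\eta+1}\lesssim|z_2|^{m+1}$, which spares you the detour through Lemma \ref{existence of mixed term in z_2} and the convexity argument on $L$.
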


\begin{proof}
By theorem \ref{special coordinate}, we have 
\begin{equation} \label{rform}
  r(z) = {\mbox{Re}}{z_3}+ \sum_{\Gamma_L} a_{\alpha, \beta} {z'}^{\alpha}{\bar z}'^{\beta}+ \sum_{\Gamma-\Gamma_L} a_{\alpha, \beta} {z'}^{\alpha}{\bar z}'^{\beta}+\mathcal{O}(|z_3||z|+|z'|^{\eta +1}).   
\end{equation}
Suppose that $(k, l) = (\alpha_1 + \beta_1, \alpha_2 + \beta_2)$ for some $(\alpha, \beta)\in \Gamma-\Gamma_L.$ Then, we consider two cases; $1\leq l \leq m$ and $m < l < \eta.$ If $1\leq l \leq m$, there is a unique $\nu =1, \cdots, N$ so that $q_{\nu-1} < l \leq q_\nu$ and  $t_l = \eta_\nu \biggl(1 - \frac{l}{\lambda_\nu} \biggr).$ Since $(k, l) = (\alpha_1 + \beta_1, \alpha_2 + \beta_2)$ for some $(\alpha, \beta) \in \Gamma -\Gamma_L,$  $ \frac{k}{\eta_\nu}+\frac{l}{\lambda_\nu} > 1.$ This gives $t_l =  \eta_\nu \biggl(1 - \frac{l}{\lambda_\nu}\biggr) < k.$ Since $k$ is an integer, $[t_l]+1 \leq k.$ Thus, we have $|z_1|^k|z_2|^l \leq |z_1|^{[t_l]+1}|z_2|^l$ for each $l = 1, \cdots, m.$   
On the other hand, if $(k, l)=(\alpha_1 + \beta_1, \alpha_2 + \beta_2)$ for some $(\alpha, \beta)\in \Gamma-\Gamma_L$ and $m < l < \eta,$ then $|z_1|^k|z_2|^l \leq |z_1|^k|z_2|^{m+1} \leq |z_2|^{m+1}$ for small $z_1$ and $z_2.$
Since $|z'|^{\eta +1} \approx |z_1|^{\eta +1} + |z_2|^{\eta +1},$ it follows that
$\sum_{\Gamma -\Gamma_L} a_{\alpha, \beta} {z'}^{\alpha}{\bar{z'}}^{\beta}+\mathcal{O}(|z_3||z|+|z'|^{\eta +1})= \mathcal{O}(|z_3||z|+\sum_{\nu = 1}^N \sum_{l = q_{\nu - 1}}^{q_\nu} |z_1|^{[t_l]+1}|z_2|^l+|z_2|^{m+1}).$ Therefore, $r(z)$ in (\ref{rform}) is represented as 
\begin{equation} \label{r form 2}
{\mbox{Re}}{z_3}+ \sum_{\Gamma_L} a_{\alpha, \beta} {z'}^{\alpha}{\bar z}'^{\beta} + \mathcal{O}(|z_3||z|+\sum_{\nu = 1}^N \sum_{l = q_{\nu - 1}}^{q_\nu} |z_1|^{[t_l]+1}|z_2|^l+|z_2|^{m+1}).
\end{equation}
Now, apply $\Gamma_L = (\Gamma_L - \Lambda) \cup \Lambda$ for the second part of summation in (\ref{rform}). 
\end{proof}

\begin{rem}\label{sizeofM} \
\begin{enumerate}[\normalfont i)] 
	\item $M_{\alpha_2, \beta_2}(z_1)$ is not identically zero  for $\alpha_2 + \beta_2 = q_\nu$ and the homogeneous polynomial is of order $p_\nu$ for each $\nu           = 1, \cdots, N-1.$
	\item If  $\nu = N,$ then $|M_{\alpha_2, \beta_2}(z_1)|$ is a nonzero constant for all $\alpha_2, \beta_2 > 0$ with $\alpha_2+\beta_2 = m = q_N$ since $ p_N = 0.$
	\item Since $M_{\alpha_2, \beta_2}(z_1)$ is a homogeneous polynomial of order $p_\nu, \nu =1, \cdots, N,$ in $z_1$-variable, there are $\theta_0 \in [0, 2\pi]$          and a small constant $c > 0$ such that $|M_{\alpha_2, \beta_2}(\tau e^{i\theta})| \neq 0$ for all $|\theta - \theta_0| < c$ and $0 < \tau \leq 1.$   
	       In particular, if we take $d = e^{i\theta_0}$ and $\tau = \delta^{\frac{1}{\eta}}$ we have $|M_{\alpha_2, \beta_2}(d \delta^{\frac{1}{\eta}})| \approx \delta^{\frac{p_\nu}{\eta}}$ for all             $\alpha_2+ \beta_2 = q_\nu$ with all              $\nu = 1, \cdots, N.$
\end{enumerate}
\end{rem}


\section{The construction of bounded holomorphic function with large derivative near the boundary}\label{Sec4}

Let $z_1 = d\delta^{\frac{1}{\eta}}.$ Then, we get a complex two dimensional slice. After the holomorphic coordinate change as Proposition 1.1 in \cite{C2}, we can define a bounded holomorphic function with a large nontangential derivative as in \cite{C2} on the slice. In this section, first, we construct a holomorphic coordinate system in $\mathbb{C}^3$ to exactly fit the holomorphic coordinate system as in proposition 1.1 of \cite{C2} when $z_1$ is fixed as $d\delta^{\frac{1}{\eta}}$. Second, we show that the holomorphic function defined on the slice is also well-defined on a family of slices along the small neighborhood of $z_1 = d\delta^{\frac{1}{\eta}}.$ To show the well-definedness of the holomorphic function up to boundary in $\mathbb{C}^3,$ we need the estimates of derivatives.   Let's denote $ U \big|_{z_1 = d\delta^{\frac{1}{\eta}}} = U \cap \{(d\delta^{\frac{1}{\eta}}, z_2, z_3)\}$ and let $ \widetilde{e}_\delta = (d\delta^{\frac{1}{\eta}}, 0, e_\delta)$ satisfy $r(\widetilde{e}_\delta) = 0.$ Since $\frac{\partial r}{\partial z_3}(0) \neq 0,$ clearly $\frac{\partial r}{\partial z_3}(\widetilde{e}_\delta) \neq 0.$ We start with the similar argument as Proposition 1.1 in \cite{C2}.\\

\begin{prop} \label{coordinatechangeinc2}
For $ \widetilde{e}_\delta \in U \big|_{z_1 = d\delta^{\frac{1}{\eta}}},$ there exists a holomorphic coordinate system  $ (z_2, z_3)= \Phi_{\widetilde{e}_\delta}(\zeta '') = (\zeta_2 , \Phi_3 (\zeta'')))$ such that in the new coordinate $\zeta'' = (\zeta_2, \zeta_3)$ defined by
\begin{equation}
\Phi_{\widetilde{e}_\delta}(\zeta '')= \biggl(\zeta_2, \  e_\delta + \biggl(\frac{\partial r}{\partial z_3}(\widetilde{e_\delta}) \biggr)^{-1}                                                             \biggl(\frac{\zeta_3}{2}- \sum_{l=2}^m c_l({\widetilde e}_\delta) \zeta_2^l                        - \frac{\partial r}{\partial  z_2} ({\widetilde e}_\delta){\zeta_2}  \biggr)   \biggr),  \label{catlin version coordinate chanage}
\end{equation}
the function $\rho(d\delta^{\frac{1}{\eta}}, \zeta'') = r(d\delta^{\frac{1}{\eta}}, z'') \circ \Phi_{\widetilde{e}_\delta}(\zeta'')$ satisfies 
\begin{equation} \label{catlin_defining_expression}
\rho(d\delta^{\frac{1}{\eta}},\zeta'')= \mbox{\normalfont{Re}}\zeta_3 + \sum\limits_{\substack{j+k=2\\j, k > 0}}^{m} a_{j,k}(\widetilde{e_\delta}) \zeta_2^j {\bar \zeta_2}^k + \mathcal{O}(|\zeta_3||\zeta''|+|\zeta_2|^{m+1} ),
\end{equation}
where $z'' = (z_2, z_3)$.
\end{prop}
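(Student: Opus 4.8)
The plan is to mimic the Bloom--Graham / Catlin normalization procedure of Proposition~\ref{proposition 1}, but now applied to the \emph{two-variable} function $z''\mapsto r(d\delta^{\frac1\eta},z'')$ with the base point shifted from $0$ to $\widetilde e_\delta$. First I would translate so that $\widetilde e_\delta$ becomes the origin in the $\zeta''$ chart: this is why $e_\delta$ appears as the additive constant in the $\zeta_3$ slot of \eqref{catlin version coordinate chanage}. Next, since $\frac{\partial r}{\partial z_3}(\widetilde e_\delta)\neq 0$, I would solve for $z_3$ along the slice and rescale the $\zeta_3$ coordinate by $\bigl(\frac{\partial r}{\partial z_3}(\widetilde e_\delta)\bigr)^{-1}$ so that the linear part of $\rho$ in $\zeta_3$ becomes exactly $\operatorname{Re}\zeta_3$; the factor $\tfrac12$ and the subtraction of $\frac{\partial r}{\partial z_2}(\widetilde e_\delta)\zeta_2$ are precisely the terms needed to kill the remaining first-order contributions (the holomorphic $z_3$-linear part contributes $\operatorname{Re}$, and the $z_2$-linear term is absorbed into the change of the $z_3$ coordinate rather than appearing as a genuine linear term, using that on the zero set one replaces $\operatorname{Re}$ of a holomorphic linear term).

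Then comes the Bloom--Graham step: I would iteratively subtract pure holomorphic monomials $c_l(\widetilde e_\delta)\zeta_2^l$ for $l=2,\dots,m$ from the $z_3$-coordinate, exactly as in the inductive construction of $\phi^{l+1}$ in the proof of Proposition~\ref{proposition 1}, to remove all pure terms ${\zeta_2}^l$ (and their conjugates $\bar\zeta_2^l$, which are handled automatically because $\rho$ is real and the coordinate change is holomorphic) of degree $\le m$ from $\rho$. After these subtractions the only surviving terms of degree $\le m$ in $\zeta_2,\bar\zeta_2$ are the genuinely mixed ones $\zeta_2^j\bar\zeta_2^k$ with $j,k>0$, giving the coefficients $a_{j,k}(\widetilde e_\delta)$; everything of degree $\ge m+1$ in $\zeta_2$, together with everything carrying a factor of $\zeta_3$, is swept into the error term $\mathcal O(|\zeta_3||\zeta''|+|\zeta_2|^{m+1})$. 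This is the content of \eqref{catlin_defining_expression}, and it is exactly Proposition~1.1 of \cite{C2} applied on the slice, which is legitimate because the slice $\{z_1=d\delta^{\frac1\eta}\}\cap\Omega$ is itself a (two-dimensional) pseudoconvex domain with Bloom--Graham type at most $m$ at $\widetilde e_\delta$ — indeed Theorem~\ref{special coordinate}\eqref{special coordinate 1} shows the leading $z_2$-behavior of $r$ is degree $m$, consistent with $T_{BG}(0)=m$.

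One point to be careful about is the order of operations: the scaling/translation in the $\zeta_3$ direction and the subtraction of the $\zeta_2^l$ monomials can be performed simultaneously because $\Phi_{\widetilde e_\delta}$ as written is a single triangular (hence invertible near the origin) map of the form $(\zeta_2,\zeta_3)\mapsto(\zeta_2,\text{affine in }\zeta_3 + \text{polynomial in }\zeta_2)$; I would note that its Jacobian at the origin is $\bigl(\frac{\partial r}{\partial z_3}(\widetilde e_\delta)\bigr)^{-1}/2\neq 0$ by hypothesis, so it is a genuine local holomorphic coordinate change. I would also remark that the coefficients $c_l(\widetilde e_\delta)$, $a_{j,k}(\widetilde e_\delta)$, $\frac{\partial r}{\partial z_2}(\widetilde e_\delta)$ all depend smoothly (indeed polynomially in the Taylor data of $r$) on $\widetilde e_\delta$, which will matter in the next section when one lets $\delta\to 0$ and needs uniform control of the family of slices; but for the statement of this proposition that dependence is harmless.

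The main obstacle is not any deep estimate but rather the bookkeeping of \emph{which} terms land in the error $\mathcal O(|\zeta_3||\zeta''|+|\zeta_2|^{m+1})$: one must check that the pure terms of degree between $2$ and $m$ are genuinely the only obstruction to the normal form, that subtracting $c_l(\widetilde e_\delta)\zeta_2^l$ does not reintroduce lower-degree pure terms, and that the already-present $\mathcal O(|z_3||z|)$ error of $r$ (from Theorem~\ref{special coordinate}) transforms into an $\mathcal O(|\zeta_3||\zeta''|)$ error under $\Phi_{\widetilde e_\delta}$ — this last point uses that $\Phi_{\widetilde e_\delta}$ maps a neighborhood of the origin into a neighborhood of $\widetilde e_\delta$ bi-Lipschitzly and that $z_3 = \mathcal O(|\zeta_3|+|\zeta_2|)$ with the $|\zeta_2|$ part of $z_3$ only entering through the holomorphic polynomial correction, which has no pure linear or mixed low-degree effect. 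All of this is routine Taylor expansion, so I would present it compactly by invoking the proof of Proposition~\ref{proposition 1} verbatim on the slice and pointing out the two modifications (nonzero base point, hence the $e_\delta$ and $\frac{\partial r}{\partial z_2}(\widetilde e_\delta)\zeta_2$ terms).
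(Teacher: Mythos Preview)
Your proposal is correct and follows essentially the same approach as the paper: first a translation/rescaling $\Phi_{\widetilde e_\delta}^1$ to normalize the linear part to $\operatorname{Re}\zeta_3$, then an inductive subtraction of pure holomorphic monomials via maps $\phi^l(\zeta'')=(\zeta_2,\zeta_3-2c_l(\widetilde e_\delta)\zeta_2^l)$ with $c_l(\widetilde e_\delta)=\frac{1}{l!}\frac{\partial^l\rho_l}{\partial w_2^l}(d\delta^{1/\eta},0,0)$, composed as $\Phi_{\widetilde e_\delta}=\Phi_{\widetilde e_\delta}^1\circ\phi^2\circ\cdots\circ\phi^m$. The only minor remark is that your aside about the slice having Bloom--Graham type at most $m$ is not needed for (and not proved by) this proposition---the nonvanishing of $A_m(\widetilde e_\delta)$ is established separately afterward.
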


\begin{proof} For $ {\widetilde e}_\delta \in U \big|_{z_1 = d\delta^{\frac{1}{\eta}}},$ define
\begin{equation}\label{rho2changeofcoordinate}
 \Phi_{\widetilde{e}_\delta}^1(w'') =\biggl( w_2, \ e_\delta + \biggl(\frac{\partial r}{\partial z_3}({\widetilde e}_\delta) \biggr)^{-1}\biggl(\frac{w_3}{2} - \frac{\partial r}{\partial z_2} ({\widetilde e}_\delta)w_2 \biggr)\biggr). 
\end{equation}
Then we have 
\begin{equation}\label{rho2expression}
\rho_2(d\delta^{\frac{1}{\eta}}, w'') = r(d\delta^{\frac{1}{\eta}}, z'') \circ \Phi_{\widetilde{e}_\delta}^1(w'') = \mbox{Re} w_3 + \mathcal{O}(|w''|^2),
\end{equation} 
where $w'' = (w_2, w_3).$
Now assume that we have defined $\Phi_{\widetilde{e}_\delta}^{l-1} : \mathbb{C}^2 \rightarrow \mathbb{C}^2 $ so that there exist numbers $a_{j, k}$ for $j, k > 0$ and $j+k < l$ so that $\rho_l(d\delta^{\frac{1}{\eta}}, w'') = r(d\delta^{\frac{1}{\eta}}, z'')\circ \Phi_{\widetilde{e}_\delta}^{l-1}(w'')$ satisfies
$$\rho_l(d\delta^{\frac{1}{\eta}}, w'') = \mbox{Re}w_3 + \sum_{\substack{j+k=2\\j, k > 0}}^{l-1} a_{j,k}({\widetilde e}_\delta) w_2^j {\bar w_2}^k + \mathcal{O}(|w_3||w''|+|w_2|^l),  $$
where $w'' = (w_2, w_3).$
If we define $\Phi_{\widetilde{e}_\delta}^l = \Phi_{\widetilde{e}_\delta}^{l-1}\circ \phi^l,$ where 
\begin{equation}\label{lthchangeofvariable}
\phi^l(\zeta'') = \biggl(\zeta_2, \  \zeta_3 - \frac{2}{l!} \frac{\partial^l \rho_l}{\partial w_2^l}(d\delta^{\frac{1}{\eta}}, 0, 0)\zeta_2^l \biggr).
\end{equation}   
then 
\begin{equation}\label{rholexpression}
\rho_{l+1}(d\delta^{\frac{1}{\eta}}, \zeta'')= \rho_l \circ \phi^l (\zeta'')  = r(d\delta^{\frac{1}{\eta}}, z'')\circ \Phi_{\widetilde{e}_\delta}^l(\zeta'')
\end{equation}  satisfies
$$\rho_{l+1}(d\delta^{\frac{1}{\eta}}, \zeta'') = \mbox{Re}\zeta_3 + \sum\limits_{\substack{j+k=2\\j, k > 0}}^{l} a_{j,k}({\widetilde e}_\delta) \zeta_2^j {\bar \zeta_2}^k + \mathcal{O}(|\zeta_3||\zeta''|+|\zeta_2|^{l+1}),$$ where $\zeta'' = (\zeta_2, \zeta_3).$
Therefore, if we choose $\Phi_{\widetilde{e}_\delta} = \Phi_{\widetilde{e}_\delta}^m = \Phi_{\widetilde{e}_\delta}^{m-1} \circ \phi^m = \cdots = \Phi_{\widetilde{e}_\delta}^1 \circ \phi^2 \circ \cdots \circ \phi^m,$ then $\rho = \rho_{m+1} = \rho_m \circ \phi^m = r \circ \Phi_{\widetilde{e}_\delta}.$ 
This shows (\ref{catlin version coordinate chanage}) and (\ref{catlin_defining_expression}), where $ c_l({\widetilde e}_\delta)$ is defined by
\begin{equation}\label{clexpression}
 c_l({\widetilde e}_\delta) = \frac{1}{l!}\frac{\partial^l \rho_l}{\partial w_2^l}(d\delta^{\frac{1}{\eta}}, 0, 0).   
\end{equation}
\end{proof}

As in \cite{C2}, we set
\begin{equation}\label{def of Al}
A_l ({\widetilde e}_\delta) = \mbox{max} \{|a_{j,k}({\widetilde e}_\delta)|; j+k = l \}, \hspace{0.3in} l=2, \cdots, m 
\end{equation}
and 
\begin{equation}\label{taudef}
\tau({\widetilde e}_\delta, \delta) = \min \biggl\{\biggl(\frac{\delta}{A_l({\widetilde e}_\delta)} \biggr)^{1/l}; 2 \leq l \leq m \biggr\}
\end{equation}
As we will see later (Remark \ref{Amnonzero}), we have $A_m ({\widetilde e}_\delta) \neq 0$ since $ |A_m ({\widetilde e}_\delta)| \geq c_m > 0,$ where $\delta > 0$ is sufficiently small. This means $$\tau({\widetilde e}_\delta, \delta) \lesssim \delta^{\frac{1}{m}}.$$
Define
\begin{equation}\label{rbox}
 R_\delta ({\widetilde e}_\delta)= \{\zeta'' \in \mathbb{C}^2; |\zeta_2| < \tau({\widetilde e}_\delta, \delta), |\zeta_3| < \delta \}.
\end{equation} \\

Before estimating the derivative of $r$, we estimate the size of $e_\delta$.
Since $r({\widetilde e}_\delta)= 0,$  Taylor's theorem in $z_3$ about $e_\delta$  gives
$$r(d\delta^{\frac{1}{\eta}}, 0, z_3)= 2\mbox{Re}\biggl( \frac{\partial r}{\partial z_3}(d\delta^{\frac{1}{\eta}}, 0, e_\delta)(z_3 - e_\delta) \biggr)+ \mathcal{O}(|z_3 - e_\delta|^2).$$
If we take $z_3 = 0,$ then $|r(d\delta^{\frac{1}{\eta}}, 0, 0)|= \left|2\mbox{Re}\biggl( \frac{\partial r}{\partial z_3}(d\delta^{\frac{1}{\eta}}, 0, 0)(- e_\delta) \biggr)+ \mathcal{O}(|e_\delta|^2)\right| \approx |e_\delta|$ since $|e_\delta| \ll 1$ and $|\frac{\partial r}{\partial z_3}| \approx 1$ near $0.$ Therefore \ref{changed order of contact}) of Theorem \ref{special coordinate} means $|e_\delta| \lesssim \delta.$ \\

\begin{lem}\label{rderivative}
Let $ l = 1, 2, \cdots, m$ and let $\alpha_2^\nu$ and $\beta_2^\nu$ be positive numbers as given in Lemma \ref{existence of mixed term in z_2} for $\nu = 1, \cdots, N.$ Then the function $r$ satisfies
\begin{enumerate}[\normalfont (i)]
	\item $\biggl|\frac{\partial^l r}{{\partial z_2^{\alpha_2}}{\partial {\bar{z}_2}}^{\beta_2}}({\widetilde e}_\delta)\biggr| \lesssim \delta^{\frac{t_l}{\eta}},$
	       \quad \text{where $\alpha_2, \beta_2 \geq 0.$}   \label{rderivative1}
	\item $\biggl| \frac{\partial^{q_\nu} r }{{\partial {z_2}^{\alpha_2^\nu}}{\partial {\bar z_2}^{\beta_2^\nu}} }({\widetilde e}_\delta)\biggr| \approx                       \delta^{\frac{p_\nu}{\eta}},$ \quad \text{where $\alpha_2^\nu > 0$ and $\beta_2^\nu > 0.$}\label{rderivative2}
\end{enumerate}
\end{lem}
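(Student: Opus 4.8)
The plan is to compute the derivatives $\frac{\partial^l r}{\partial z_2^{\alpha_2}\partial \bar z_2^{\beta_2}}$ directly from the representation of $r$ given in Proposition \ref{final expression of r} (equation (\ref{repre of r})), evaluated at the point $\widetilde e_\delta = (d\delta^{1/\eta}, 0, e_\delta)$. Since we differentiate only in $z_2$ and $\bar z_2$, every term of (\ref{repre of r}) with $\alpha_2 + \beta_2 \neq l$ (for part (i)) or $\neq q_\nu$ (for part (ii)) contributes nothing once we also set $z_2 = 0$ after differentiation: a pure power $z_2^{\alpha_2}\bar z_2^{\beta_2}$ survives the operator $\partial_{z_2}^{\alpha_2}\partial_{\bar z_2}^{\beta_2}$ and evaluation at $z_2=0$ only when the exponents match exactly, yielding $\alpha_2!\,\beta_2!$ times the coefficient. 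The coefficient in front of $z_2^{\alpha_2}\bar z_2^{\beta_2}$ (for $\alpha_2+\beta_2 = l$ in the range $q_{\nu-1} < l \le q_\nu$) is $M_{\alpha_2,\beta_2}(z_1)$ evaluated at $z_1 = d\delta^{1/\eta}$, which by Remark \ref{sizeofM} is a homogeneous polynomial of degree $p_\nu$ in $z_1$, hence of size $\lesssim \delta^{p_\nu/\eta}$; and by Remark \ref{sizeofM} iii) (the choice of $d = e^{i\theta_0}$) it is in fact $\approx \delta^{p_\nu/\eta}$ whenever $\alpha_2,\beta_2 > 0$. Since $p_\nu = t_{q_\nu}$ and, more generally, for $q_{\nu-1} < l \le q_\nu$ the contributing terms are those in $\Gamma_L - \Lambda$ with $\alpha_1+\beta_1 \le p_\nu$ plus possibly the error term $E_2$, we need $t_l = \eta_\nu(1 - l/\lambda_\nu)$ to be the relevant exponent bounding $\frac{\alpha_1+\beta_1}{\eta}$.

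Concretely, for part (i) I would argue as follows. Differentiating (\ref{repre of r}) and setting $z_2 = 0$, the only surviving contributions at $\widetilde e_\delta$ come from: (a) the terms in the double sum $\sum_\nu \sum_{\alpha_2+\beta_2 = q_\nu} M_{\alpha_2,\beta_2}(z_1) z_2^{\alpha_2}\bar z_2^{\beta_2}$ with $\alpha_2+\beta_2 = l$, which forces $l = q_\nu$ for the relevant $\nu$ — but actually when $q_{\nu-1} < l < q_\nu$ strictly, these do not contribute and only the terms from $\Gamma_L - \Lambda$ and from $E_2$ matter; (b) the terms $a_{\alpha,\beta}{z'}^\alpha\bar z'^\beta$ from $\Gamma_L - \Lambda$ with $\alpha_2+\beta_2 = l$, each contributing $a_{\alpha,\beta}(d\delta^{1/\eta})^{\alpha_1}(\overline{d\delta^{1/\eta}})^{\beta_1}$, of size $\delta^{(\alpha_1+\beta_1)/\eta}$; and (c) the error term $E_2(z)$, whose $z_2$-homogeneous part of degree $l$ is $\mathcal{O}(|z_1|^{[t_l]+1}|z_2|^l)$, contributing $\mathcal{O}(\delta^{([t_l]+1)/\eta}) \le \mathcal{O}(\delta^{t_l/\eta})$ at $z_1 = d\delta^{1/\eta}$. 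For (b), I must check that every $(\alpha,\beta) \in \Gamma_L$ with $\alpha_2+\beta_2 = l$ satisfies $\frac{\alpha_1+\beta_1}{\eta_\nu} + \frac{l}{\lambda_\nu} \ge 1$, hence $\alpha_1 + \beta_1 \ge \eta_\nu(1 - l/\lambda_\nu) = t_l$, so that $\delta^{(\alpha_1+\beta_1)/\eta} \lesssim \delta^{t_l/\eta}$; this is exactly condition (\ref{condition5}) defining $\Gamma_L$ together with the fact that points of $L$ lie on or above each Newton edge. Collecting (a),(b),(c) gives the bound $\lesssim \delta^{t_l/\eta}$ in (i). For part (ii), take $l = q_\nu$ and the specific exponents $\alpha_2 = \alpha_2^\nu > 0$, $\beta_2 = \beta_2^\nu > 0$ from Lemma \ref{existence of mixed term in z_2}: now the dominant contribution is precisely $\alpha_2^\nu!\,\beta_2^\nu!\,M_{\alpha_2^\nu,\beta_2^\nu}(d\delta^{1/\eta})$, which by Remark \ref{sizeofM} iii) is $\approx \delta^{p_\nu/\eta}$, while all other contributions — from $\Gamma_L - \Lambda$ terms with $\alpha_1+\beta_1 > p_\nu$ (strictly, since the edge point $(p_\nu,q_\nu)$ with a mixed $z_2$ term is in $\Lambda$, not $\Gamma_L - \Lambda$) and from $E_2$ — are of strictly smaller order $\delta^{(>p_\nu)/\eta}$ and hence negligible for $\delta$ small. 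Therefore the full derivative is $\approx \delta^{p_\nu/\eta}$, giving (ii).

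The main obstacle I anticipate is bookkeeping rather than any deep difficulty: one has to be careful that the representation (\ref{repre of r}) genuinely captures every term of size $\gtrsim \delta^{t_l/\eta}$ at the point $\widetilde e_\delta$, i.e. that nothing hidden in $E_2(z) = \mathcal{O}(|z_3||z| + \sum_\nu\sum_{l} |z_1|^{[t_l]+1}|z_2|^l + |z_2|^{m+1})$ produces an unexpectedly large $z_2$-derivative. The term $\mathcal{O}(|z_3||z|)$ needs the estimate $|e_\delta| \lesssim \delta$ established just above (so $|z_3| = |e_\delta|$ and $|z| \lesssim \delta^{1/\eta}$ at $\widetilde e_\delta$, giving $\mathcal{O}(\delta^{1 + 1/\eta})$, which is $\le \delta^{t_l/\eta}$ since $t_l \le \eta$ and indeed much smaller); the $|z_2|^{m+1}$ term vanishes after setting $z_2 = 0$ for $l \le m$; and the middle sum is handled exactly as in (c) above using $[t_l] + 1 > t_l$. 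A secondary point requiring care is that in part (ii) one must verify the subdominant terms are strictly smaller, which uses the strict inequalities $p_0 > p_1 > \cdots > p_N$ from (\ref{condition2}) and the fact that any $(\alpha,\beta) \in \Gamma_L$ with $\alpha + \beta = (p_\nu, q_\nu)$ and both $\alpha_2,\beta_2 > 0$ has been removed into $\Lambda$; the remaining $\Gamma_L - \Lambda$ contributions to the $z_2^{\alpha_2^\nu}\bar z_2^{\beta_2^\nu}$ derivative must then have $\alpha_1 + \beta_1 > p_\nu$, which follows because along the edge $L_\nu$ the only lattice point with $q$-coordinate equal to $q_\nu$ is $(p_\nu, q_\nu)$ itself.
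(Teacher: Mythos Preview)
Your approach is correct and essentially identical to the paper's: differentiate the representation of $r$ (the paper uses the slightly coarser form (\ref{r form 2}) for part (i) and (\ref{repre of r}) for part (ii)), evaluate at $\widetilde e_\delta$, and bound each piece using $|e_\delta|\lesssim\delta$ together with Remark \ref{sizeofM}. One small slip: after applying $\partial_{z_2}^{\alpha_2}\partial_{\bar z_2}^{\beta_2}$ and setting $z_2=0$, the $\mathcal{O}(|z_3||z|)$ error contributes $\mathcal{O}(|e_\delta|)\lesssim\delta$, not $\mathcal{O}(\delta^{1+1/\eta})$ as you wrote (you evaluated the bound before differentiating rather than after), but since $\delta\lesssim\delta^{t_l/\eta}$ for $l\ge 1$ this does not affect the conclusion.
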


\begin{proof}
By (\ref{r form 2}) and $t_l < [t_l]+1$, we have 
\begin{equation}\label{rdifferentiation}
  \biggl|\frac{\partial^l r}{{\partial z_2^{\alpha_2}}{\partial   {\bar{z}_2}}^{\beta_2}}({\widetilde e}_\delta)\biggr| \lesssim \delta^{\frac{t_l}{\eta}} + |e_\delta| + \delta^{\frac{[t_l]+1}{\eta}} \lesssim \delta^{\frac{t_l}{\eta}}.
\end{equation}  
For (\ref{rderivative2}), note that if $l = q_\nu,$ then $t_l = p_\nu$. Therefore,
(\ref{repre of r}) gives
\begin{equation*}
 |M_{\alpha_2^\nu, \beta_2^\nu}(d\delta^{\frac{1}{\eta}})| - C_1(|e_\delta| + \delta^{\frac{p_\nu + 1}{\eta}}) \leq \biggl|\frac{1}{{\alpha_2^\nu}!{\beta_2^\nu}!}\frac{\partial^{q_\nu} r }{{\partial {z_2}^{\alpha_2^\nu}}{\partial {\bar z_2}^{\beta_2^\nu}} }(\widetilde{e_\delta})\biggr|  \leq |M_{\alpha_2^\nu, \beta_2^\nu}(d\delta^{\frac{1}{\eta}})| + C_1(|e_\delta| + \delta^{\frac{{p_\nu} + 1}{\eta}})
\end{equation*}
for some constant $C_1.$
Since Remark \ref{sizeofM} means $|M_{\alpha_2^\nu, \beta_2^\nu}(d\delta^{\frac{1}{\eta}})| \approx \delta^{\frac{p_\nu}{\eta}},$  we have $$\biggl|\frac{\partial^{q_\nu} r }{{\partial {z_2}^{\alpha_2^\nu}}{\partial {\bar z_2}^{\beta_2^\nu}} }({\widetilde e}_\delta)\biggr| \approx \delta^{\frac{p_\nu}{\eta}}.$$ 
\end{proof}

\begin{lem} \label{rhoderivative}
Let $\rho_l, \phi^l$ and $\Phi^l$ be given as in (\ref{rho2changeofcoordinate})-(\ref{rholexpression}) for  $l=2, \cdots, m+1$ and $\alpha_2^\nu$ and $\beta_2^\nu$ be positive numbers as given in Lemma \ref{existence of mixed term in z_2} for $\nu = 1, \cdots, N.$  Then
\begin{enumerate}[\normalfont (i)] 
  \item $\biggl|\frac{\partial^k \rho_l}{{\partial \zeta_2^{\alpha_2}}{\partial {{\bar \zeta}_2}^{\beta_2}}}(d\delta^{\frac{1}{\eta}}, 0, 0)\biggr| \lesssim \delta^{\frac{t_k}{\eta}} \quad \text{for each } \ k = 1, \cdots, m.$
  \item $\biggl|\frac{\partial^{q_\nu} \rho_l }{{\partial {\zeta_2}^{\alpha_2^\nu}}{\partial {\bar \zeta_2}^{\beta_2^\nu}} }(d\delta^{\frac{1}{\eta}}, 0, 0) \biggr|  \approx \delta^{\frac{p_\nu}{\eta}} \quad \text{for each } \ \nu = 1, \cdots, N.$
\end{enumerate}  
  
In particular, $|c_l({\widetilde e}_\delta)| \lesssim \delta^{\frac{t_l}{\eta}},$ where $c_l({\widetilde e}_\delta)$ is given in (\ref{clexpression}).
\end{lem}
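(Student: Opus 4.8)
The plan is to prove both estimates by tracking how the coordinate changes $\phi^2, \dots, \phi^m$ (and hence $\Phi^{l-1}_{\widetilde e_\delta}$) alter the relevant derivatives of $r$ at the base point, starting from the estimates on $r$ itself established in Lemma \ref{rderivative}. The base case is $\rho_2(d\delta^{1/\eta}, w'') = r(d\delta^{1/\eta}, z'') \circ \Phi^1_{\widetilde e_\delta}(w'')$, where $\Phi^1_{\widetilde e_\delta}$ is the affine map in \eqref{rho2changeofcoordinate}; since this map is affine with coefficients of controlled size (the coefficients $(\partial r/\partial z_3)^{-1}$ is $\approx 1$ and $(\partial r/\partial z_2)(\widetilde e_\delta)$ is a derivative of $r$ in $z_2$ of order $1$, so $\lesssim \delta^{t_1/\eta}$ by Lemma \ref{rderivative}(i)), differentiating $\rho_2$ in $\zeta_2, \bar\zeta_2$ at $(d\delta^{1/\eta}, 0, 0)$ only produces sums of $z_2, \bar z_2$ derivatives of $r$ at $\widetilde e_\delta$, each governed by the $t_l$-bound. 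The key point is the convexity/monotonicity of the Newton-diagram exponents: $t_k$ is defined so that $(t_k, k)$ lies on the line $L_\nu$ through $(p_{\nu-1}, q_{\nu-1})$ and $(p_\nu, q_\nu)$ when $q_{\nu-1} < k \le q_\nu$, and these exponents are chosen precisely so that the bound is stable under taking further derivatives and under the substitutions.

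Next I would run the induction on $l$. Assume $\rho_l$ satisfies the two displayed estimates; the map $\phi^l$ in \eqref{lthchangeofvariable} replaces $\zeta_3$ by $\zeta_3 - \frac{2}{l!}\frac{\partial^l \rho_l}{\partial w_2^l}(d\delta^{1/\eta},0,0)\zeta_2^l$, and the coefficient here is, up to the constant $2/l!$, exactly the quantity $c_l(\widetilde e_\delta)$, which by the inductive hypothesis (part (i) with $k = l$, $\alpha_2 = l$, $\beta_2 = 0$) satisfies $|c_l(\widetilde e_\delta)| \lesssim \delta^{t_l/\eta}$ — this is the ``in particular'' clause, and it drops out of the induction for free. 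Then $\rho_{l+1} = \rho_l \circ \phi^l$, and I expand a mixed derivative $\partial^k_{\zeta_2}\partial^{\beta_2}_{\bar\zeta_2}\rho_{l+1}$ at $(d\delta^{1/\eta},0,0)$ via the chain rule: because $\phi^l$ fixes $\zeta_2$ and changes $\zeta_3$ by a term that is $O(|\zeta_2|^l)$ with coefficient $O(\delta^{t_l/\eta})$, every term produced is a product of a $\zeta_3$-derivative of $\rho_l$ (which carries a factor $|\zeta_3||\zeta''|$ in the error, hence an extra power) or a lower-order $\zeta_2$-derivative of $\rho_l$ times powers of the small coefficient $c_l$. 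One checks that the worst term matches the claimed exponent $\delta^{t_k/\eta}$ using again that $(t_l, l)$, $(t_k, k)$ lie on (or above) the relevant line segments, i.e. that $t_k \le t_l\cdot(\text{something}) + (\text{contributions})$ works out — this is the bookkeeping with the piecewise-linear function $t_\bullet$. For part (ii), at $l = q_\nu$ one has $t_{q_\nu} = p_\nu$, and the point is that the correction terms coming from $\phi^2, \dots, \phi^{q_\nu - 1}$ and from the $z_3$-error of $r$ are all strictly smaller ($\lesssim |e_\delta| + \delta^{(p_\nu+1)/\eta} \ll \delta^{p_\nu/\eta}$, using $|e_\delta|\lesssim\delta$ from the size estimate before the lemma and $m < \eta$), so the leading term $|M_{\alpha_2^\nu,\beta_2^\nu}(d\delta^{1/\eta})| \approx \delta^{p_\nu/\eta}$ from Remark \ref{sizeofM} survives and gives the two-sided estimate.

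The main obstacle I expect is the bookkeeping in the inductive step of part (i): verifying that every monomial produced by the chain rule applied to $\rho_l \circ \phi^l$ — a sum over ways of distributing $k$ derivatives among the $\zeta_2$-slot and the new $\zeta_3 - c_l\zeta_2^l$-slot — has $\delta$-exponent at least $t_k/\eta$. This requires showing an inequality of the form $t_{k - jl + (\text{count of }\zeta_3\text{-derivatives})} + j\,(t_l/\eta \text{ contribution}) + (\text{extra powers from the }|\zeta_3||\zeta''| \text{ error}) \ge t_k/\eta$, which is exactly the statement that the region above the Newton diagram $L$ is ``multiplicatively closed'' in the appropriate weighted sense. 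The cleanest way to organize this is to note that $t_\bullet$ is concave and piecewise linear with the $(p_\nu, q_\nu)$ as breakpoints, and to reduce everything to the single-segment ($N=1$) case where $t_k = \eta(1 - k/\lambda)$ is affine in $k$ and the inequality becomes elementary; the general case then follows because a derivative of order $k$ with $q_{\nu-1} < k \le q_\nu$ is controlled by the segment $L_\nu$ alone, by condition \eqref{condition5}. I would present the $N=1$ computation in full and indicate that the general case is identical segment by segment.
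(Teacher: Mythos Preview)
Your strategy matches the paper's: induction on $l$, base case $\rho_2 = r\circ\Phi^1_{\widetilde e_\delta}$ via the chain rule and Lemma~\ref{rderivative}, inductive step via the substitution in $\phi^j$, and the observation that $|c_l(\widetilde e_\delta)|\lesssim\delta^{t_l/\eta}$ falls out of part~(i) at step $l$.

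Where you diverge is the ``main obstacle'' paragraph: the Newton-diagram bookkeeping you anticipate is not needed, and the paper's argument is much shorter. In the chain-rule expansion of $\partial^k\rho_{j+1}$ at $(d\delta^{1/\eta},0,0)$ (with $\rho_{j+1}=\rho_j\circ\phi^j$), every term other than $\partial^k\rho_j$ carries at least one factor $c_j(\widetilde e_\delta)$ multiplied by a derivative of $\rho_j$ at that point \emph{in which $w_3$ or $\bar w_3$ appears at least once}. Such mixed derivatives are uniformly $O(1)$ in $\delta$ (the $\rho_j$ depend smoothly on the bounded parameter $\widetilde e_\delta$), so the entire correction is simply $\lesssim |c_j|\lesssim \delta^{t_j/\eta}$. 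A nonzero correction arises only when $k>j$, and then $t_j>t_k$ gives $\delta^{t_j/\eta}\le \delta^{t_k/\eta}$. Thus only the \emph{monotonicity} of $l\mapsto t_l$ is used---no convexity, no reduction to $N=1$, no inequality of the shape $t_{k-sj}+s\,t_j\ge t_k$. The same simplification handles part~(ii): for $j<q_\nu$ the correction $\lesssim\delta^{t_j/\eta}=o(\delta^{p_\nu/\eta})$ is absorbed, while for $j\ge q_\nu$ the mixed derivative with $\alpha_2^\nu,\beta_2^\nu>0$ (hence both $<q_\nu\le j$) is literally unchanged by $\phi^j$. Your stated correction bound $\delta^{(p_\nu+1)/\eta}$ in (ii) is not the right exponent---it is $\delta^{t_j/\eta}$---though the conclusion is unaffected.
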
 


\begin{proof}
By induction, we prove both (i) and (ii). For part (i), let $l = 2.$  Since $\rho_2(d\delta^{\frac{1}{\eta}}, \zeta'') = r(d\delta^{\frac{1}{\eta}}, z'') \circ \Phi_{\widetilde{e}_\delta}^1 (\zeta''),$ by chain rule and Lemma \ref{rderivative}, we have
\begin{equation*}
\biggl|\frac{\partial^k \rho_2}{{\partial \zeta_2^{\alpha_2}}{\partial \bar{\zeta_2}^{\beta_2}}}(d\delta^{\frac{1}{\eta}}, 0, 0)\biggr| 
          \lesssim \biggl|\frac{\partial^k r}{{\partial z_2^{\alpha_2}}{\partial {\bar z_2}^{\beta_2}}}({\widetilde e}_\delta)\biggr| 
           + \biggl|\frac{\partial r}{\partial z_2}({\widetilde e}_\delta) \biggr| 
          \lesssim \delta^{\frac{t_k}{\eta}}+\delta^{\frac{t_1}{\eta}} \lesssim \delta^{\frac{t_k}{\eta}}.
\end{equation*}
for all $ k = 1, \cdots, m.$
This proves for the case $l = 2.$ 
Now, by induction, we assume
$$\biggl|\frac{\partial^k \rho_l}{{\partial \zeta_2^{\alpha_2}}{\partial \bar{\zeta_2}^{\beta_2}}}(d\delta^{\frac{1}{\eta}}, 0, 0)\biggr| \lesssim \delta^{\frac{t_k}{\eta}}$$ for all $k = 1, \cdots, m$ and $l = 2, \cdots, j.$ \
Note that
\begin{equation}\label{rholrealexpression}
\rho_{j+1}(d\delta^{\frac{1}{\eta}}, \zeta_2, \zeta_3) = \rho_j(d\delta^{\frac{1}{\eta}},\ \zeta_2,\ \zeta_3 - 2 c_j({\widetilde e}_\delta)\zeta_2^j).
\end{equation}
If  $k < j,$  the inductive assumption gives
$$\biggl|\frac{\partial^k \rho_{j+1}}{{\partial \zeta_2^{\alpha_2}}{\partial {\bar \zeta_2}^{\beta_2}}}(d\delta^{\frac{1}{\eta}}, 0, 0)\biggr| = \biggl| \frac{\partial^k \rho_{j}}{{\partial w_2^{\alpha_2}}{\partial {\bar w_2}^{\beta_2}}}(d\delta^{\frac{1}{\eta}}, 0, 0)\biggr| \lesssim \delta^{\frac{t_k}{\eta}}.$$
Now, let $k = j.$  If $\alpha_2 > 0$ and $\beta_2 > 0,$ we have the same result as the previous one. Otherwise, $\frac{\partial^j \rho_{j+1}}{\partial \zeta_2^j}(d\delta^{\frac{1}{\eta}}, 0, 0) = \frac{\partial^j \rho_{j}}{\partial w_2^{j}}(d\delta^{\frac{1}{\eta}}, 0, 0) - 2j!c_j({\widetilde e}_\delta)\frac{\partial \rho_j}{\partial w_3}(d\delta^{\frac{1}{\eta}}, 0, 0) = 0.$  
If $k > j,$ the inductive assumption gives
\begin{equation*}
\biggl|\frac{\partial^k \rho_{j+1}}{{\partial \zeta_2^{\alpha_2}}{\partial \bar{\zeta_2}^{\beta_2}}}(d\delta^{\frac{1}{\eta}}, 0, 0)\biggr|  \lesssim \biggl| \frac{\partial^k \rho_{j}}{{\partial w_2^{\alpha_2}}{\partial \bar{w_2}^{\beta_2}}}(d\delta^{\frac{1}{\eta}}, 0, 0)\biggr| + |c_j({\widetilde e}_\delta)| \lesssim \delta^{\frac{t_k}{\eta}}+\delta^{\frac{t_j}{\eta}} \lesssim \delta^{\frac{t_k}{\eta}}.
\end{equation*}

For part (ii), let $l = 2$ and  apply the chain rule again to $\rho_2,$  we have 
\begin{equation*}
\biggl|\frac{\partial^{q_\nu} r}{{\partial z_2^{\alpha_2^\nu}}{\partial \bar{z_2}^{\beta_2^\nu}}}(\widetilde{e_\delta})\biggl| - C\biggl|\frac{\partial r}{\partial z_2}(\widetilde{e_\delta}) \biggr| \leq \biggl|\frac{\partial^{q_\nu} \rho_2}{{\partial \zeta_2^{\alpha_2^\nu}}{\partial \bar{\zeta_2}^{\beta_2^\nu}}}(d\delta^{\frac{1}{\eta}}, 0, 0)\biggr|  \leq \biggl|\frac{\partial^{q_\nu} r}{{\partial z_2^{\alpha_2^\nu}}{\partial \bar{z_2}^{\beta_2^\nu}}}(\widetilde{e_\delta})\biggl| + C\biggl|\frac{\partial r}{\partial z_2}(\widetilde{e_\delta}) \biggr| 
\end{equation*}
for some constant $C.$  Then, Lemma \ref{rderivative} means 
\begin{equation}\label{lowerbound2}
\delta^{\frac{p_\nu}{\eta}} - \delta^{\frac{t_1}{\eta}}  \lesssim  \biggl|\frac{\partial^{q_\nu} \rho_2}{{\partial \zeta_2^{\alpha_2^\nu}}{\partial \bar{\zeta_2}^{\beta_2^\nu}}}(d\delta^{\frac{1}{\eta}}, 0, 0)\biggr| \lesssim  \delta^{\frac{p_\nu}{\eta}} + \delta^{\frac{t_1}{\eta}}.
\end{equation}
Since $1 < q_\nu$ for each $\nu = 1, \cdots, N,$ it gives $p_\nu = t_{q_\nu} < t_1.$ Therefore, we have 
\begin{equation*}
\biggl|\frac{\partial^{q_\nu} \rho_2}{{\partial \zeta_2^{\alpha_2^\nu}}{\partial \bar{\zeta_2}^{\beta_2^\nu}}}(d\delta^{\frac{1}{\eta}}, 0, 0)\biggr| \approx \delta^{\frac{p_\nu}{\eta}}.
\end{equation*}
This proves the statement for the case  $l = 2.$ By induction,  assume
$\biggl|\frac{\partial^{q_\nu} \rho_l}{{\partial \zeta_2^{\alpha_2^\nu}}{\partial \bar{\zeta_2}^{\beta_2^\nu}}}(d\delta^{\frac{1}{\eta}}, 0, 0)\biggr| \approx \delta^{\frac{p_\nu}{\eta}}.$
First, consider the case when $q_\nu \leq l.$ Since  $\alpha_2^\nu > 0$ and $\beta_2^\nu > 0,$  by the similar argument as in the proof of (i) and the by inductive assumption, we have    $$\biggl|\frac{\partial^{q_\nu} \rho_{l+1}}{{\partial \zeta_2^{\alpha_2^\nu}}{\partial  \bar{\zeta_2}^{\beta_2^\nu}}}(d\delta^{\frac{1}{\eta}}, 0, 0)\biggr| = \biggl| \frac{\partial^{q_\nu} \rho_{l}}{{\partial w_2^{\alpha_2^\nu}}{\partial {{\bar w}_2}^{\beta_2^\nu}}}(d\delta^{\frac{1}{\eta}}, 0, 0)\biggr| \approx \delta^{\frac{t_{q_\nu}}{\eta}} = \delta^{\frac{{p_\nu}}{\eta}} .$$   
Now, consider the case when $q_\nu > l,$ If we take the derivative of $\rho_{l+1}$ in (\ref{rholrealexpression}) about $\zeta_2,$ the derivative related to the third component involves $c_l(\widetilde{e_\delta}).$ Therefore, we have
\begin{align*}
\biggl| \frac{\partial^{q_\nu} \rho_{l}}{{\partial w_2^{\alpha_2^\nu}}{\partial \bar{w_2}^{\beta_2^\nu}}}(d\delta^{\frac{1}{\eta}}, 0, 0)\biggr| - C'|c_l(\widetilde{e_\delta})|  \leq \biggl|\frac{\partial^{q_\nu} \rho_{l+1}}{{\partial \zeta_2^{\alpha_2^\nu}}{\partial \bar{\zeta_2}^{\beta_2^\nu}}}(d\delta^{\frac{1}{\eta}}, 0, 0)\biggr| & \leq \biggl| \frac{\partial^{q_\nu} \rho_{l}}{{\partial w_2^{\alpha_2^\nu}}{\partial \bar{w_2}^{\beta_2^\nu}}}(d\delta^{\frac{1}{\eta}}, 0, 0)\biggr| \\ 
& + C'|c_l(\widetilde{e_\delta})|
\end{align*}
for some constant $C'$.   
Therefore, the inductive assumption and part (i) means 
\begin{equation}\label{lowerboundl}
\delta^{\frac{p_\nu}{\eta}} - \delta^{\frac{t_l}{\eta}} \lesssim \biggl|\frac{\partial^{q_\nu} \rho_{l+1}}{{\partial \zeta_2^{\alpha_2^\nu}}{\partial \bar{\zeta_2}^{\beta_2^\nu}}}(d\delta^{\frac{1}{\eta}}, 0, 0)\biggr| \lesssim \delta^{\frac{p_\nu}{\eta}} - \delta^{\frac{t_l}{\eta}}
\end{equation}
Since $q_\nu > l$, it means $p_\nu = t_{q_\nu} < t_l.$ Thus, we have 
$\biggl|\frac{\partial^{q_\nu} \rho_{l+1}}{{\partial \zeta_2^{\alpha_2^\nu}}{\partial \bar{\zeta_2}^{\beta_2^\nu}}}(d\delta^{\frac{1}{\eta}}, 0, 0)\biggr| \approx \delta^{\frac{p_\nu}{\eta}}.$

\end{proof}

Finally, we show that the derivatives of $\rho$ can be bounded from below.

\begin{rem}\label{Amnonzero}
Take $\nu = N.$ Since $\biggl|\frac{\partial^{q_\nu} \rho }{{\partial {\zeta_2}^{\alpha_2^\nu}}{\partial {\bar \zeta_2}^{\beta_2^\nu}} }(d\delta^{\frac{1}{\eta}}, 0, 0) \biggr| \approx |A_m ({\widetilde e}_\delta)| ,$ Lemma \ref{rhoderivative} means $|A_m (\widetilde{e_\delta})| \approx 1.$
\end{rem}

Now, we recall some facts in \cite{C2} before showing the holomorphic function defined in the complex two dimensional slice(i.e $z_1$ is fixed) is well-defined when we move $z_1$ in a small neighborhood of $z_1 = d\delta^{\frac{1}{\eta}}.$

\begin{thm}[\bf Catlin]\label{existenceofholomorphic}
Suppose the defining function $\rho$ for a pseudoconvex domain in $b\Omega \subset \mathbb{C}^2$ has the following form:  $$\rho(\zeta) = \mbox{\normalfont{Re}}\zeta_2 + \sum_{\substack{j+k=2 \\ j,k > 0 }}^m a_{j,k}{\zeta_1}^j{\bar{\zeta_1}}^k + \mathcal{O}(|\zeta_2||\zeta|+|\zeta_1|^{m+1}).$$
Set $$A_l = \max \{|a_{j, k}| ; j+k =l \},  \qquad l = 2, \cdots, m.$$ and   $$J_\delta(\zeta) = (\delta^2 + |\zeta_2|^2 + \sum\limits_{k=2}^m (A_k)^2 |\zeta_1|^{2k} )^{\frac{1}{2}}.$$
Define
$$\Omega_{a, \delta}^{\epsilon_0} = \{\zeta ; |\zeta_1| < a , |\zeta_2|<a, \rho(\zeta) < \epsilon J_\delta(\zeta) \} \quad \text{for any small constant $a, \epsilon_0 > 0.$}$$
If we have $|A_m| \geq c_m > 0$ for some positive constant $c_m,$
then there exist small constants $a, \epsilon_0 > 0$ so that for any sufficiently small $\delta>0,$ there is a $L^2$ holomorphic function $f \in A(\Omega_{a, \delta}^{\epsilon_0})$ satisfying $\biggl|\frac{\partial f}{\partial\zeta_2}(0, -\frac{b\delta}{2}) \biggr| \geq \frac{1}{2\delta}$ for some small constant $b$. Moreover, the values  $a$ and $\epsilon_0$  depend only on the constant $c_m$  and  $C_{m+1} = \left\| \rho \right\|_{C^{m+1}(U)},$ where $U$ is a small neighborhood of $0.$
\end{thm}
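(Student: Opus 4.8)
\emph{Proof proposal.} This planar estimate is the one external input we quote from Catlin; here is the architecture I would follow. The point $P:=(0,-\tfrac{b\delta}{2})$ lies in $\Omega_{a,\delta}^{\epsilon_0}$ at Euclidean distance comparable to $\delta$ from its boundary, since $\rho(P)-\epsilon_0 J_\delta(P)\approx-(\tfrac b2+\epsilon_0)\delta$. The plan is to write $f=f_0-v$, where $f_0$ is an explicit local ``peak'' function built from the boundary geometry at $P$, and $v$ is a $\bar\partial$-correction obtained from H\"ormander's $L^2$ estimates against a plurisubharmonic weight on $\Omega_{a,\delta}^{\epsilon_0}$ whose complex Hessian is as large as $\delta^{-2}$ in the $\zeta_2$-direction near $P$. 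The size of that Hessian is precisely what forces the correction $v$ to be small, both in norm and in its $\zeta_2$-derivative at $P$, so that $f$ keeps the size control of $f_0$ and still satisfies $|\partial f/\partial\zeta_2(P)|\ge\tfrac1{2\delta}$.

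\emph{Step 1 (geometry of the bumped domain).} I would first show that, for $a$ and $\epsilon_0$ small depending only on $c_m$ and $C_{m+1}=\|\rho\|_{C^{m+1}(U)}$, the set $\Omega_{a,\delta}^{\epsilon_0}$ is pseudoconvex, with Euclidean diameter $\lesssim a$ uniformly in $\delta$. This is where the hypothesis $|A_m|\ge c_m$ and the exact shape of $J_\delta$ enter: $J_\delta$ is designed so that subtracting $\epsilon_0 J_\delta$ from $\rho$ does not destroy the Levi-pseudoconvexity of $\{\rho<0\}$, because the weights $A_k^2|\zeta_1|^{2k}$ in $J_\delta$ match the anisotropic scaling carried by the terms $a_{j,k}\zeta_1^j\bar\zeta_1^k$ of $\rho$, while $|A_m|\ge c_m$ keeps the $\zeta_1$-curvature of $J_\delta$ from degenerating; the flat faces $\{|\zeta_1|=a\}$, $\{|\zeta_2|=a\}$ are harmless. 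Concretely one would verify that $\rho-\epsilon_0 J_\delta$ is Levi-pseudoconvex along its zero set inside the bidisc $\{|\zeta_1|<a,\ |\zeta_2|<a\}$, using the $C^{m+1}$ bound to dominate the error terms of $\rho$ against the Hessian contributions of $J_\delta$. I expect this step to be the main obstacle.

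\emph{Step 2 (the weight and the local peak function).} Granting pseudoconvexity, I would invoke Catlin's construction of a bounded plurisubharmonic weight $\phi_\delta$ on $\Omega_{a,\delta}^{\epsilon_0}$, with $-1\le\phi_\delta\le 0$ and $i\partial\bar\partial\phi_\delta\gtrsim\delta^{-2}\,i\,d\zeta_2\wedge d\bar\zeta_2$ on the part of $\Omega_{a,\delta}^{\epsilon_0}$ within a fixed small multiple of $\delta$ of $P$, the implied constants depending only on $c_m$ and $C_{m+1}$. For the peak function I would take $f_0=\chi\cdot e^{h/\delta}$, where $h(\zeta)=(\zeta_2-\zeta_2^P)-\kappa\delta$ for a small constant $\kappa>0$ (so that $h(P)=-\kappa\delta$ and $\partial h/\partial\zeta_2(P)=1$), and $\chi$ is a cutoff equal to $1$ near $P$ and supported in a polydisc $\mathcal N\ni P$ of polyradii comparable to $(\tau(\widetilde e_\delta,\delta),\delta)$, shrunk by a small factor $c<\kappa$ so that $\mathcal N\subset\Omega_{a,\delta}^{\epsilon_0}$ and $\operatorname{Re}h<0$ on $\mathcal N$. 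Then $|f_0|<1$, $|\partial f_0/\partial\zeta_2(P)|=e^{-\kappa}/\delta$, and $\alpha:=\bar\partial f_0=e^{h/\delta}\,\bar\partial\chi$ is supported in $\mathcal N$, away from $P$, with $|\alpha|\lesssim\delta^{-1}$.

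\emph{Step 3 (the correction and conclusion).} I would solve $\bar\partial v=\alpha$ on $\Omega_{a,\delta}^{\epsilon_0}$ by H\"ormander's theorem with weight $\phi_\delta+|\zeta|^2$: since $i\partial\bar\partial\phi_\delta$ is of size $\delta^{-2}$ on $\operatorname{supp}\alpha$ while $|\alpha|^2\lesssim\delta^{-2}$ there, the resulting bound is $\|v\|_{L^2(\Omega_{a,\delta}^{\epsilon_0})}^2\lesssim\operatorname{vol}(\operatorname{supp}\alpha)$, which is small. As $P$ is the centre of a polydisc of polyradii $\sim(\tau,\delta)$ inside $\Omega_{a,\delta}^{\epsilon_0}$, interior (Cauchy) estimates then give $|\partial v/\partial\zeta_2(P)|\le C\,c/\delta$ with $C$ depending only on $c_m,C_{m+1}$. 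Setting $f:=f_0-v$, this is holomorphic on $\Omega_{a,\delta}^{\epsilon_0}$, lies in $A(\Omega_{a,\delta}^{\epsilon_0})$ with controlled norm, and
\[
\Bigl|\frac{\partial f}{\partial\zeta_2}(P)\Bigr|\ \ge\ \Bigl|\frac{\partial f_0}{\partial\zeta_2}(P)\Bigr|-\Bigl|\frac{\partial v}{\partial\zeta_2}(P)\Bigr|\ \ge\ \frac{e^{-\kappa}-Cc}{\delta}\ \ge\ \frac{1}{2\delta}
\]
once $\kappa$ and $c$ are fixed small (depending only on $c_m,C_{m+1}$); the constant $b$ of the statement is this $\kappa$. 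Apart from Step 1, the delicate point is the bookkeeping of the three scales $\tau(\widetilde e_\delta,\delta)$, $\delta$ and $\epsilon_0$ in the $L^2$ estimate so that the main term $|\partial f_0/\partial\zeta_2(P)|\approx\delta^{-1}$ genuinely dominates the correction; if a sup-bounded $f$ is wanted (as it is downstream), one runs the $\bar\partial$-estimate against an additional plurisubharmonic weight forcing $v$ to decay at $\partial\Omega_{a,\delta}^{\epsilon_0}$.
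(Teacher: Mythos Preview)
The paper does not prove this theorem: it is stated as a result of Catlin, with the single-sentence remark that ``the result stated in \cite{C2} applies to a more restricted situation, but a careful examination of the proof actually implies the above result.'' So there is no proof in the paper to compare against; the theorem is quoted as an external input, exactly as you acknowledge in your first line.

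Your architectural sketch is consistent with Catlin's method in \cite{C2}: the bumped domain $\Omega_{a,\delta}^{\epsilon_0}$ is shown to be pseudoconvex (this is the content of Catlin's \S4--\S5), a bounded plurisubharmonic weight with large Hessian near the boundary is constructed (Catlin's \S2--\S3), and a holomorphic function with large normal derivative is produced via H\"ormander's $L^2$ machinery (Catlin's \S6--\S7). Since the present paper only invokes the statement, your outline is more than what is needed here; for the purposes of this paper it suffices to cite \cite{C2} and note, as the author does, that the constants $a,\epsilon_0$ depend only on $c_m$ and $C_{m+1}$, which is what allows the theorem to be applied uniformly to the $\delta$-dependent slices in \S3.
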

 
The result stated in \cite{C2} applies to a more restricted situation, but a careful examination of the proof  actually implies the above result. To apply theorem \ref{existenceofholomorphic} to the complex two dimensional slice, we consider the pushed out domain about ${\widetilde e}_\delta.$ Let $\Phi_{\widetilde{e}_\delta}$ be the map associated with ${\widetilde e}_\delta$ as in (\ref{coordinatechangeinc2}). Set ${U'' \big|}_{z_1 = d\delta^{\frac{1}{\eta}}} = \{\zeta''=(\zeta_2, \zeta_3) ; \Phi_{\widetilde{e}_\delta} (\zeta'') \in U \big|_{z_1 = d\delta^{\frac{1}{\eta}}}\}.$  For all small $\delta,$  define 
\begin{equation} \label{def of Jdelta}
 J_\delta (\zeta'') = \biggl(\delta^2 + |\zeta_3|^2 + \sum_{k = 2}^{m} (A_k ({\widetilde e}_\delta))^2 |\zeta_2|^{2k}  \biggr)^{\frac{1}{2}}
\end{equation} 
and the pushed-out domain with respect to the slice 
\begin{equation} \label{working domain} 
   \Omega_{a, \delta}^{\epsilon_0} = \{(\zeta_2, \zeta_3) ; |\zeta_2| < a, |\zeta_3| < a \ \mbox{and} \  \rho (d\delta^{\frac{1}{\eta}}, \zeta'') < {\epsilon_0} J_\delta(\zeta'') \}.  
\end{equation} 
By Theorem \ref{existenceofholomorphic}, we have a $L^2$ holomorphic function $f$ in $ \Omega_{a, \delta}^{\epsilon_0}$ satisfying
\begin{equation}\label{large derivative}
 \biggl| \frac{\partial f}{\partial \zeta_3} ( 0, -\frac{b\delta}{2})\biggr| \geq \frac{1}{2\delta}.
\end{equation}
In order to show the well-definedness of the holomorphic function $f$ when $z_1$ moves in a small neighborhood of $z_1 = d\delta^{\frac{1}{\eta}}$, we use $\Phi_{\widetilde{e}_\delta} $ given as in (\ref{coordinatechangeinc2}) and define
\begin{equation*}
\Phi(\zeta_1, \zeta_2, \zeta_3) = (\zeta_1, \zeta_2, \Phi_3 (\zeta)), 
\end{equation*}
where $\Phi_3 (\zeta)$ is defined by

\begin{equation}\label{phitobeused}
 \Phi_3 (\zeta) = e_\delta + \biggl(\frac{\partial r}{\partial z_3}({\widetilde e}_\delta) \biggr)^{-1}\biggl(\frac{\zeta_3}{2}- \sum_{l=2}^m c_l({\widetilde e}_\delta)\zeta_2^l                                 - \frac{\partial r}{\partial  z_2} ({\widetilde e}_\delta){\zeta_2}  \biggr)   
\end{equation}
and define

\begin{equation}\label{rhotobeused} 
  \rho(\zeta_1, \zeta_2, \zeta_3) = r(z_1, z_2, z_3)\circ \Phi(\zeta_1, \zeta_2, \zeta_3).
\end{equation}
In particular, when we fix $z_1 = d\delta^{\frac{1}{\eta}},$  we have the holomophic function $f$ defined in the slice  $\Omega_{a, \delta}^{\epsilon_0}$ satisfying (\ref{large derivative}).  Now, we consider the domain given by the family of the pushed out domains of the slice along with $\zeta_1$ axis and the domain in the new coordinate of $\Omega$ by $\Phi$. 
Define $$\Omega_{a, \delta, \zeta_1}^{\epsilon_0} = \{\zeta \in \mathbb{C}^3; |\zeta_1 - d\delta^{\frac{1}{\eta}}| < c\delta^{\frac{1}{\eta}}, |\zeta_2| < a, |\zeta_3| < a \ \mbox{and} \  \rho (d \delta^{\frac{1}{\eta}}, \zeta'') < {\epsilon_0} J_\delta(\zeta'') \}$$ and 
$${\Omega}_{a, \delta, \zeta_1} = \{\zeta \in \mathbb{C}^3; |\zeta_1 - d\delta^{\frac{1}{\eta}}| < c\delta^{\frac{1}{\eta}}, |\zeta_2| < a, |\zeta_3| < a \ \mbox{and} \  \rho (\zeta_1, \zeta'') < 0 \} $$ for some small $c > 0$ only depending on $\epsilon_0.$ 
Since the holomorphic function $f(\zeta_2, \zeta_3)$ defined in $\Omega_{a, \delta}^{\epsilon_0}$ is independent of $\zeta_1,$  $f$ is the well-defined holomophic function in $\Omega_{a, \delta, \zeta_1}^{\epsilon_0}.$    
We want to show $f$ is well-defined holomorphic function in ${\Omega}_{a, \delta, \zeta_1}.$ Therefore, it is enough to show ${\Omega}_{a, \delta, \zeta_1} \subset \Omega_{a, \delta, \zeta_1}^{\epsilon_0}$ for the well-definedness of $f$ in ${\Omega}_{a, \delta, \zeta_1}$. More specifically, 
\begin{align*}
{\Omega}_{a, \delta, \zeta_1} \subset \Omega_{a, \delta, \zeta_1}^{\epsilon_0}
             &\  \Leftrightarrow  \rho(d\delta^{\frac{1}{\eta}}, \zeta'') -  \rho (\zeta_1, \zeta'') < {\epsilon_0}J_\delta(\zeta''),
\end{align*}
where $\zeta'' = (\zeta_2, \zeta_3)$ and $|\zeta_1-d\delta^{\frac{1}{\eta}}| < c\delta^{\frac{1}{\eta}}, |\zeta_2| < a \ \mbox{and} \ |\zeta_3| < a$.

\begin{prop} \label{well-defined property}
Given any small $\epsilon \leq {\epsilon_0},$ there is a small $c > 0$ such that if $|\zeta_1-d\delta^{\frac{1}{\eta}}| < c\delta^{\frac{1}{\eta}}, |\zeta_2| < a \ \mbox{and} \ |\zeta_3| < a,$ then
       $$|\rho(d\delta^{\frac{1}{\eta}}, \zeta'') -  \rho (\zeta_1, \zeta'')| \lesssim {\epsilon}J_\delta (\zeta'').$$       
\end{prop}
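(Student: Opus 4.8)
The plan is to estimate the difference $\rho(d\delta^{1/\eta},\zeta'')-\rho(\zeta_1,\zeta'')$ by expanding $\rho$ in the $\zeta_1$ variable about the point $\zeta_1=d\delta^{1/\eta}$, and to show that each term produced is controlled by the corresponding monomial appearing in $J_\delta(\zeta'')$. First I would use the explicit form of $\rho$ in \eqref{rhotobeused}, together with the representation of $r$ in \eqref{repre of r} of Proposition \ref{final expression of r} and the coordinate change \eqref{phitobeused}, to write $\rho(\zeta_1,\zeta'')$ as $\mbox{Re}\,\zeta_3$ plus a sum over $\Gamma_L-\Lambda$, plus the sum over $\nu$ of the mixed-in-$z_2$ terms $M_{\alpha_2,\beta_2}(\zeta_1)\zeta_2^{\alpha_2}\bar\zeta_2^{\beta_2}$, plus an error $E_2$; the linear term $\mbox{Re}\,\zeta_3$ and the constants $e_\delta$, $\partial r/\partial z_2({\widetilde e}_\delta)$, and $c_l({\widetilde e}_\delta)$ cancel or are harmless since they do not depend on $\zeta_1$. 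Hence the difference reduces to terms of the form $\bigl(M_{\alpha_2,\beta_2}(d\delta^{1/\eta})-M_{\alpha_2,\beta_2}(\zeta_1)\bigr)\zeta_2^{\alpha_2}\bar\zeta_2^{\beta_2}$, analogous differences of $z_1^{\alpha_1}\bar z_1^{\beta_1}$ monomials from $\Gamma_L-\Lambda$, and the difference of the two $E_2$ remainders.

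Next I would carry out the key estimate on $M_{\alpha_2,\beta_2}(d\delta^{1/\eta})-M_{\alpha_2,\beta_2}(\zeta_1)$. Since $M_{\alpha_2,\beta_2}$ is a homogeneous polynomial of degree $p_\nu$ in $z_1$ (Remark \ref{sizeofM}), and $|\zeta_1-d\delta^{1/\eta}|<c\delta^{1/\eta}$ with $|d\delta^{1/\eta}|=\delta^{1/\eta}$, a standard mean-value/homogeneity bound gives $|M_{\alpha_2,\beta_2}(d\delta^{1/\eta})-M_{\alpha_2,\beta_2}(\zeta_1)|\lesssim c\,\delta^{p_\nu/\eta}$; one gains the factor $c$ precisely because the two points are within $c\delta^{1/\eta}$ of each other while both have modulus $\approx\delta^{1/\eta}$. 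Multiplying by $|\zeta_2|^{\alpha_2+\beta_2}=|\zeta_2|^{q_\nu}$ yields a contribution $\lesssim c\,\delta^{p_\nu/\eta}|\zeta_2|^{q_\nu}$. Now recall that $(p_\nu,q_\nu)$ lies on the line with intercepts $\eta_\nu,\lambda_\nu$, i.e.\ $\tfrac{p_\nu}{\eta_\nu}+\tfrac{q_\nu}{\lambda_\nu}=1$; I would invoke this to show that $\delta^{p_\nu/\eta}|\zeta_2|^{q_\nu}$ (or rather $\delta^{p_\nu/\eta}$ times the relevant power of $|\zeta_2|$ tied to the exponent $t_l$) is dominated by one of the summands $A_k({\widetilde e}_\delta)|\zeta_2|^k$ inside $J_\delta(\zeta'')$, using Lemma \ref{rhoderivative}(ii) which identifies $|a_{q_\nu,\cdot}({\widetilde e}_\delta)|\approx\delta^{p_\nu/\eta}$ up to the combinatorial constant, hence $A_{q_\nu}({\widetilde e}_\delta)\gtrsim\delta^{p_\nu/\eta}$. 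The same scheme handles the $\Gamma_L-\Lambda$ terms: for those with $\alpha_2+\beta_2=l$ the relevant $z_1$-power is at least $[t_l]+1>t_l$, so the difference of the two $z_1$-monomials is $\lesssim c\,\delta^{t_l/\eta}|\zeta_2|^l\lesssim c\,\delta^{t_l/\eta}|\zeta_2|^l$, again absorbed into $A_l({\widetilde e}_\delta)|\zeta_2|^l\lesssim\delta^{t_l/\eta}|\zeta_2|^l$ via Lemma \ref{rhoderivative} and Lemma \ref{rderivative}, plus the $\delta$-term of $J_\delta$ to soak up the $\mathcal O(|e_\delta|)\lesssim\delta$ pieces.

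Finally I would bound the difference of the two error terms $E_2(d\delta^{1/\eta},\zeta'')-E_2(\zeta_1,\zeta'')$: $E_2$ is $\mathcal O(|\zeta_3||\zeta|+\sum_\nu\sum_l|z_1|^{[t_l]+1}|\zeta_2|^l+|\zeta_2|^{m+1})$, so its $\zeta_1$-variation over the ball $|\zeta_1-d\delta^{1/\eta}|<c\delta^{1/\eta}$ is $\lesssim c\delta^{1/\eta}$ times a derivative, which again produces powers $\delta^{([t_l])/\eta}|\zeta_2|^l$ or $|\zeta_3|$-type terms, all of which are $\lesssim\epsilon J_\delta(\zeta'')$ once $c$ is chosen small relative to $\epsilon$; the $|\zeta_2|^{m+1}$ and $|\zeta_3||\zeta''|$ pieces are absorbed using $|\zeta_2|,|\zeta_3|<a$ small and the $|\zeta_3|^2$ and $\delta^2$ terms in $J_\delta$. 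Collecting the contributions and choosing $c$ sufficiently small depending on $\epsilon$ gives $|\rho(d\delta^{1/\eta},\zeta'')-\rho(\zeta_1,\zeta'')|\lesssim\epsilon J_\delta(\zeta'')$. The main obstacle I anticipate is the bookkeeping that matches each monomial difference to the correct term of $J_\delta$: one must use the collinearity relations \eqref{condition4}, the definition of $t_l$, and the lower bounds from Lemma \ref{rhoderivative}(ii)/Remark \ref{Amnonzero} simultaneously and check that no monomial with a too-small $z_1$-exponent slips through; getting the factor of $c$ (rather than just $\delta^{1/\eta}$) out of each difference, which is what makes the bound proportional to $\epsilon$, is the delicate point and rests on both points $d\delta^{1/\eta}$ and $\zeta_1$ having modulus comparable to $\delta^{1/\eta}$.
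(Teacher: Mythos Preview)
Your overall strategy---apply the mean value theorem in the $\zeta_1$ variable, expand $\rho$ via the explicit form of $r$ and the map $\Phi$, and then match each resulting monomial to a summand of $J_\delta$ using the lower bound $A_{q_\nu}({\widetilde e}_\delta)\gtrsim\delta^{p_\nu/\eta}$ from Lemma~\ref{rhoderivative}(ii)---is exactly the paper's approach. However, there is a genuine gap in your matching step. For $(\alpha,\beta)\in\Gamma_L\setminus\Lambda$ the $z_1$-exponent is \emph{exactly} $t_l$ (these points lie on $L$), not $\ge[t_l]+1$; you have confused $\Gamma_L\setminus\Lambda$ with $\Gamma\setminus\Gamma_L$. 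More importantly, your absorption ``$c\,\delta^{t_l/\eta}|\zeta_2|^l$ into $A_l({\widetilde e}_\delta)|\zeta_2|^l$'' requires the lower bound $A_l({\widetilde e}_\delta)\gtrsim\delta^{t_l/\eta}$, but Lemmas~\ref{rderivative} and~\ref{rhoderivative}(i) give only the \emph{upper} bound $A_l\lesssim\delta^{t_l/\eta}$; the reverse inequality you wrote is what is available, and it goes the wrong way. A lower bound on $A_l$ is established only at the vertex levels $l=q_\nu$.

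The fix, which is precisely what the paper does, is to introduce the auxiliary quantity $J_\delta^\nu(\zeta'')=\delta+|\zeta_3|+\sum_{\nu=1}^N\delta^{p_\nu/\eta}|\zeta_2|^{q_\nu}$ and use the interpolation inequality \eqref{interpolation}: since $(t_l,l)$ lies on the segment $L_\nu$ with endpoints $(p_{\nu-1},q_{\nu-1})$ and $(p_\nu,q_\nu)$, one has $\delta^{t_l/\eta}|\zeta_2|^l\lesssim\delta^{p_{\nu-1}/\eta}|\zeta_2|^{q_{\nu-1}}+\delta^{p_\nu/\eta}|\zeta_2|^{q_\nu}$. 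This reduces every intermediate monomial (from $\Gamma_L$, from the $c_l({\widetilde e}_\delta)$ pieces of $\Phi_3$, and from the error $E_2$) to the vertex levels, where the lower bound on $A_{q_\nu}$ is available, yielding $J_\delta^\nu\lesssim J_\delta$. You correctly flagged the collinearity relations as the key bookkeeping obstacle in your final paragraph; what is missing is the realization that interpolation to the vertices is not optional but essential, because the absorption you wrote for general $l$ simply fails.
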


Before proving Proposition \ref{well-defined property}, we note that from the standard interpolation method, we have the following fact: Let $(p_1, q_1), (p, q)$ and $(p_2, q_2)$ be collinear points in the first quadrant of the plane, and $ p_1 \leq p \leq p_2, q_2 \leq q \leq q_1.$ Then, we have
$$|\zeta_1|^{p}|\zeta_2|^q \leq |\zeta_1|^{p_1}|\zeta_2|^{q_1} + |\zeta_1|^{p_2}|\zeta_2|^{q_2}$$ for sufficiently small $\zeta_1 , \zeta_2 \in \mathbb{C}$. In particular, this means that if $(\alpha, \beta) \in \Gamma_L,$ then 
\begin{equation} \label{interpolation}
|\zeta_1|^{\alpha_1 + \beta_1} |\zeta_2|^{\alpha_2 + \beta_2} \lesssim |\zeta_1|^{p_{\nu-1}} |\zeta_2|^{q_{\nu-1}} + |\zeta_1|^{p_{\nu}} |\zeta_2|^{q_{\nu}}  
\end{equation} for some $\nu = 1, \cdots, N.$ 
 
\begin{proof}[Proof of Proposition \ref{well-defined property}]
Define 
 $${J_\delta}^\nu(\zeta'') = \delta + |\zeta_3| + \sum_{\nu = 1}^N {\delta^{\frac{p_\nu}{\eta}}}|\zeta_2|^{q_\nu}.$$

In order to show the proposition, it is enough to show ${J_\delta}^\nu(\zeta'') \lesssim J_\delta (\zeta'')$ and $|\rho(d\delta^{\frac{1}{\eta}}, \zeta_2, \zeta_3) -  \rho (\zeta_1, \zeta_2, \zeta_3)| \lesssim {\epsilon}{J_\delta}^\nu (\zeta''),$ where $|\zeta_1 - d\delta^{\frac{1}{\eta}}| < c\delta^{\frac{1}{\eta}}, |\zeta_2| < a \ \mbox{and} \ |\zeta_3| < a.$ 
By (\ref{def of Al}) and $a_{j,k}(\widetilde{e_\delta}) = j!k! \frac{\partial^{j+k} \rho}{{\partial {\zeta_2}^j}{\partial {\bar{\zeta_2}}^k}}(d\delta^{\frac{1}{\eta}}, 0, 0),$ we have $$|\frac{\partial^{j+k} \rho}{{\partial {\zeta_2}^j}{\partial {\bar{\zeta_2}}^k}}(d\delta^{\frac{1}{\eta}}, 0, 0)| \lesssim |A_l(\widetilde{e_\delta})| $$ for $ j+k = l$ with $l =2,\cdots,m.$
Therefore, Lemma \ref{rhoderivative} means that 
$$ \delta^{\frac{p_\nu}{\eta}} \approx \biggl|\frac{\partial^{q_\nu} \rho }{{\partial {\zeta_2}^{\alpha_2^\nu}}{\partial {\bar \zeta_2}^{\beta_2^\nu}} }(d\delta^{\frac{1}{\eta}}, 0, 0) \biggr| \lesssim |A_{q_\nu}(\widetilde{e_\delta})|,$$
where $\alpha_2^\nu + \beta_2^\nu = q_\nu, \alpha_2^\nu \ \mbox{and} \ \beta_2^\nu > 0.$ This shows ${J_\delta}^\nu(\zeta'') \lesssim J_\delta (\zeta'').$ \\

Let's estimate $|\rho(d\delta^{\frac{1}{\eta}}, \zeta'') -  \rho (\zeta_1, \zeta'')|$. Let $D_1$ denote the differential operator either $\frac{\partial}{\partial \zeta_1}$ or $\frac{\partial}{\partial {\overline \zeta}_1}.$ Then,
\begin{equation} \label{originalestimate}
|\rho (\zeta_1, \zeta'')- \rho(d\delta^{\frac{1}{\eta}}, \zeta'')| 
        \leq c \delta^{\frac{1}{\eta}} \max\limits_{|\zeta_1 - d\delta^{\frac{1}{\eta}}| < c\delta^{\frac{1}{\eta}}} |D_1 \rho (\zeta_1, \zeta'')|.
\end{equation}
Let's estimate $ D_1 \rho (\zeta_1, \zeta'').$ By (\ref{r form 2}), (\ref{phitobeused})  and (\ref{rhotobeused}), we know
\begin{align*}
\rho(\zeta_1, \zeta'') &= \mbox{Re}(\Phi_3 (\zeta) ) + \sum_{\Gamma_L} a_{\alpha,                                                                                                              \beta}{\zeta_1}^{\alpha_1}{\bar{\zeta}_1}^{\beta_1}{\zeta_2}^{\alpha_2}{\bar{\zeta}_2}^{\beta_2} +                                                                 \mathcal{O}(|\Phi_3(\zeta)||(\zeta_1, \zeta_2, \Phi_3(\zeta))|    \\
                                &\ \qquad +\sum_{\nu = 1}^N \sum_{l = q_{\nu - 1}}^{q_\nu} |\zeta_1|^{[t_l]+1}|\zeta_2|^l  + |\zeta_2|^{m+1}).
\end{align*}
Since $|\zeta_1 - d\delta^{\frac{1}{\eta}}| < c\delta^{\frac{1}{\eta}}$ and $\Phi_3$ is independent of $\zeta_1$, we have
\begin{equation} \label{rhozeta1derivative}
 |{D_1 \rho}(\zeta_1, \zeta'')|
          \lesssim \sum_{\Gamma_L}{\delta}^{\frac{\alpha_1 + \beta_1 -1}{\eta}} |\zeta_2|^{\alpha_2 + \beta_2} + |\Phi_3 (\zeta)| + \sum_{\nu = 1}^N \sum_{l = q_{\nu - 1}}^{q_\nu} \delta^{\frac{[t_l]}{\eta}}|\zeta_2|^l .  
\end{equation}
Combining (\ref{originalestimate}) with (\ref{rhozeta1derivative}), we obtain
\begin{equation}\label{finalestimatetobeused}   
|\rho (\zeta_1, \zeta'')- \rho(d\delta^{\frac{1}{\eta}}, \zeta'')|  \lesssim c\biggl( \sum_{\Gamma_L}{\delta}^{\frac{\alpha_1 + \beta_1}{\eta}}|\zeta_2|^{\alpha_2 + \beta_2} + |\Phi_3 (\zeta)| \nonumber + \sum_{\nu = 1}^N \sum_{l = q_{\nu - 1}}^{q_\nu} \delta^{\frac{[t_l]+1}{\eta}}|\zeta_2|^l  \biggr)           
\end{equation}

\noindent With $\zeta_1 = d\delta^{\frac{1}{\eta}}$, (\ref{interpolation}) means $\sum\limits_{\Gamma_L}{\delta}^{\frac{\alpha_1 + \beta_1}{\eta}}|\zeta_2|^{\alpha_2 + \beta_2} \lesssim {J_\delta}^\nu (\zeta'').$ 
Also, (\ref{clexpression}) and Lemma \ref{rhoderivative} gives $|\Phi_3(\zeta)| \lesssim |e_\delta| + |\zeta_3| + \sum_{l = 1}^m |c_l(\widetilde{e_\delta})||\zeta_2|^l \lesssim \delta + |\zeta_3| + \sum_{l = 1}^m \delta^{\frac{t_l}{\eta}}|\zeta_2|^l.$ Since $(t_l, l ) \in L_\nu$  for some $\nu = 1, \cdots, N,$ again, (\ref{interpolation}) gives $|\Phi_3(\zeta)| \lesssim {J_\delta}^\nu (\zeta'').$
Furthermore, since $\delta^{\frac{[t_l]+1}{\eta}}|\zeta_2|^l  \lesssim \delta^{\frac{t_l}{\eta}}|\zeta_2|^l$, the same argument as before gives $\sum_{\nu = 1}^N \sum_{l = q_{\nu - 1}}^{q_\nu} \delta^{\frac{[t_l]+1}{\eta}}|\zeta_2|^l \lesssim {J_\delta}^\nu (\zeta'').$ 
\end{proof} 

\vspace{0.5cm}

Now, we know that there is a holomorphic function $f(\zeta_1, \zeta_2, \zeta_3)= f(\zeta_2, \zeta_3)$ defined on ${\Omega}_{a, \delta, \zeta_1}^{\epsilon_0}$ such that
\begin{enumerate}[i)]
	\item  ${\Omega}_{a, \delta, \zeta_1} \subset {\Omega}_{a, \delta, \zeta_1}^{\epsilon_0}$
	\item  $\biggl| \frac{\partial f}{\partial \zeta_3} (0, -\frac{b\delta)}{2}\biggr| \geq \frac{1}{2\delta}$ for a small constant $b > 0.$ 
\end{enumerate}

Without loss of generality, we can assume ${\Omega}_{a, \delta, \zeta_1} \subset {\Omega}_{a, \delta, \zeta_1}^{\frac{\epsilon_0}{2}} \subset {\Omega}_{a, \delta, \zeta}^{\epsilon_0}.$ For the boundedness of $f$ in ${\Omega}_{\frac{a}{2}, \delta, \zeta_1}^{\frac{\epsilon_0}{2}},$ we follow the same argument as Chapter 7 (p 462) in \cite{C2}. Before showing the boundedness, we define a polydisc $P_{a_1} ({\zeta''_0})$ by
$$P_{a_1} (\zeta''_0) = \{\zeta'' = (\zeta_2, \zeta_3); |\zeta_2 - {\zeta_2^0}| < \tau (\widetilde{e_\delta}, a_1 J_\delta (\zeta''_0)) \ \mbox{and} \
 |\zeta_3 - {\zeta_3^0}| <  a_1 J_\delta (\zeta''_0)\}, $$ 
where $\zeta''_0 = (\zeta_2^0, \zeta_3^0)$ and $a_1 > 0.$


\begin{thm}\label{boundedholomorphicfunction}
$f$ is bounded holomorphic function in ${\Omega}_{\frac{a}{2}, \delta, \zeta_1}^{\frac{\epsilon_0}{2}}$ such that 
\begin{equation}\label{largederivative}
\biggl| \frac{\partial f}{\partial \zeta_3} \biggl(0, -\frac{b\delta}{2}\biggr)\biggr| \geq \frac{1}{2\delta} \ \text{for a small constant $b > 0$}.
\end{equation}
\end{thm}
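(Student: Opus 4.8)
The plan is to transfer Catlin's construction from the two-dimensional slice $z_1 = d\delta^{1/\eta}$ to the three-dimensional tube ${\Omega}_{a,\delta,\zeta_1}^{\epsilon_0}$, and then to reproduce his boundedness argument on the slightly shrunken domain ${\Omega}_{\frac a2,\delta,\zeta_1}^{\frac{\epsilon_0}{2}}$. First I would recall that Theorem \ref{existenceofholomorphic}, applied to $\rho(d\delta^{1/\eta},\zeta'')$ via Proposition \ref{coordinatechangeinc2} (whose hypothesis $|A_m(\widetilde e_\delta)|\geq c_m$ is exactly Remark \ref{Amnonzero}), produces an $L^2$ holomorphic function $f$ on the slice domain $\Omega_{a,\delta}^{\epsilon_0}$ of \eqref{working domain} with $\bigl|\tfrac{\partial f}{\partial\zeta_3}(0,-\tfrac{b\delta}2)\bigr|\geq\tfrac1{2\delta}$. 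Since $f$ is independent of $\zeta_1$, it is automatically holomorphic on ${\Omega}_{a,\delta,\zeta_1}^{\epsilon_0}$, and Proposition \ref{well-defined property} gives the inclusion ${\Omega}_{a,\delta,\zeta_1}\subset{\Omega}_{a,\delta,\zeta_1}^{\epsilon_0}$ (in fact, by choosing the constant $c$ a little smaller, the sharper chain ${\Omega}_{a,\delta,\zeta_1}\subset{\Omega}_{a,\delta,\zeta_1}^{\frac{\epsilon_0}{2}}\subset{\Omega}_{a,\delta,\zeta_1}^{\epsilon_0}$). Thus the large-derivative estimate \eqref{largederivative} is immediate once we know $f$ is defined there; the content of the theorem is the uniform boundedness on ${\Omega}_{\frac a2,\delta,\zeta_1}^{\frac{\epsilon_0}{2}}$, with a bound independent of $\delta$.

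For the boundedness I would follow Chapter 7 (p.\ 462) of \cite{C2} essentially verbatim, slice by slice. Fix $\zeta_1$ with $|\zeta_1-d\delta^{1/\eta}|<c\delta^{1/\eta}$ and a point $\zeta''_0=(\zeta_2^0,\zeta_3^0)\in{\Omega}_{\frac a2,\delta,\zeta_1}^{\frac{\epsilon_0}{2}}$; the key geometric fact is that the polydisc $P_{a_1}(\zeta''_0)$ defined just before the statement — with polyradii $\tau(\widetilde e_\delta,a_1 J_\delta(\zeta''_0))$ in $\zeta_2$ and $a_1 J_\delta(\zeta''_0)$ in $\zeta_3$ — is, for a suitable small $a_1$ depending only on $c_m$ and $C_{m+1}$, contained in the larger slice domain $\Omega_{a,\delta}^{\epsilon_0}$. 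This is where Lemma \ref{rhoderivative} and Lemma \ref{rderivative} do the work: they control all the coefficients $a_{j,k}(\widetilde e_\delta)$ (hence $A_l(\widetilde e_\delta)$, hence the scaling function $J_\delta$ and the radius $\tau(\widetilde e_\delta,\delta)$) uniformly in $\delta$, so that translating the defining inequality $\rho(d\delta^{1/\eta},\cdot)<\epsilon_0 J_\delta$ over a box of these polyradii changes $\rho$ by at most a fixed fraction of $J_\delta$ — the same Taylor-expansion estimate as in \cite{C2}, with the $C^{m+1}$ norm bound $C_{m+1}=\|\rho\|_{C^{m+1}(U)}$ furnishing the implied constants. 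Then the sub-mean-value property applied to $|f|^2$ over $P_{a_1}(\zeta''_0)$ gives
\begin{equation*}
|f(\zeta''_0)|^2 \le \frac{1}{\mathrm{vol}\,P_{a_1}(\zeta''_0)}\int_{P_{a_1}(\zeta''_0)}|f|^2 \le \frac{C}{\tau(\widetilde e_\delta,a_1 J_\delta(\zeta''_0))^2\, (a_1 J_\delta(\zeta''_0))^2}\,\|f\|^2_{L^2(\Omega_{a,\delta}^{\epsilon_0})},
\end{equation*}
and the $L^2$ bound on $f$ coming out of the $\bar\partial$-construction in Theorem \ref{existenceofholomorphic} is, by that same construction, comparable to $\tau(\widetilde e_\delta,\delta)\,\delta$ up to constants depending only on $c_m,C_{m+1}$; combined with $J_\delta(\zeta''_0)\gtrsim\delta$ and the monotonicity $\tau(\widetilde e_\delta,a_1 J_\delta(\zeta''_0))\gtrsim \tau(\widetilde e_\delta,\delta)$ (since $J_\delta\gtrsim\delta$ and $\tau$ is increasing in its second argument up to a power), all powers of $\delta$ cancel and one is left with $|f(\zeta''_0)|\le C$ with $C$ independent of $\delta$, $\zeta_1$ and $\zeta''_0$.

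The main obstacle, and the step I would spend the most care on, is verifying that the polydisc $P_{a_1}(\zeta''_0)$ genuinely sits inside $\Omega_{a,\delta}^{\epsilon_0}$ \emph{uniformly}: this requires that the change of $\rho(d\delta^{1/\eta},\cdot)$ across the box be controlled by $\epsilon_0 J_\delta$, which in turn hinges on the estimates $|a_{j,k}(\widetilde e_\delta)|\lesssim\delta^{t_l/\eta}$ (for $j+k=l$) and the definition of $\tau$ and $J_\delta$ being mutually balanced in exactly the way \cite{C2} arranges — i.e.\ that $A_l(\widetilde e_\delta)\,\tau(\widetilde e_\delta,\delta)^l\lesssim\delta$ for every $l$, which is immediate from \eqref{taudef}, together with the fact, supplied by Remark \ref{Amnonzero}, that $A_m(\widetilde e_\delta)\approx 1$ so that $\tau(\widetilde e_\delta,\delta)\lesssim\delta^{1/m}$ is attained and the construction does not degenerate. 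Once this uniform containment is in hand, every constant in the mean-value estimate depends only on $c_m$ and $C_{m+1}$, which are themselves uniform in $\delta$ by Lemma \ref{rhoderivative}, and the proof is complete. I would also remark explicitly that $f$ being independent of $\zeta_1$ means all of these slice-wise bounds are automatically the three-dimensional bounds, so no genuinely new analysis beyond \cite{C2} is needed once the geometry of Section \ref{Sec4} is set up.
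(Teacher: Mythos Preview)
Your proposal is correct and follows essentially the same route as the paper: both reduce to boundedness, invoke the polydisc $P_{a_1}(\zeta''_0)\subset\Omega_{a,\delta}^{\epsilon_0}$ (the analogue of Proposition~4.3(iii) in \cite{C2}), and then appeal to the mean-value argument of Chapter~7 in \cite{C2}. You in fact spell out more of the $L^2$-to-$L^\infty$ cancellation than the paper does (the paper simply cites \cite{C2} and, as a minor organizational point, separates off the ``flat'' boundary pieces $|\zeta_2|=a/2$, $|\zeta_3|=a/2$ where fixed-radius polydiscs suffice), but the substance is the same.
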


\begin{proof}
Since $f$ is a $L^2$ holomorphic function in ${\Omega}_{a, \delta, \zeta_1}^{\epsilon_0}$ with (\ref{largederivative}), it is enough to show $f$ is bounded in ${\Omega}_{\frac{a}{2}, \delta, \zeta_1}^{\frac{\epsilon_0}{2}}$. Let $(\zeta_2^0, \zeta_3^0) \in \{\rho(d\delta^{\frac{1}{\eta}}, \zeta'') = \frac{\epsilon_0}{2}J_\delta(\zeta''), |\zeta_2| < \frac{3a}{4}, |\zeta_3|<\frac{3a}{4}\} \subset {\Omega}_{a, \delta, \zeta}^{\epsilon_0}.$ By the similar property as (iii) of Proposition 4.3 in \cite{C2}, if $\zeta''_0 = (\zeta_2^0, \zeta_3^0) \in \{\rho(d\delta^{\frac{1}{\eta}}, \zeta'') = \frac{\epsilon_0}{2}J_\delta(\zeta''), |\zeta_2| < \frac{3a}{4}, |\zeta_3|<\frac{3a}{4}\}$, then
$$ P_{a_1} (\zeta''_0) \subset {\Omega}_{a, \delta, \zeta}^{\epsilon_0},$$
for some small constant $a_1 > 0.$ We can apply the same argument as Chapter 7 (p 462) in \cite{C2} to obtain $|f(\zeta_2^0, \zeta_3^0)| \lesssim 1$. 
For all others  points on the boundary and interior of  ${\Omega}_{\frac{a}{2}, \delta, \zeta_1}^{\frac{\epsilon_0}{2}}$, we can choose the polydics with fixed radius which is contained in ${\Omega}_{{a}, \delta, \zeta_1}^{{\epsilon_0}}$ and apply the same argument as Chapter 7 in \cite{C2}.
\end{proof}

\section{Proof of Theorem 1.1} \label{Sec5}

In this section, we prove our main theorem. Before proving the Theorem, let's recall the notations for H\"older norm and H\"older space. For $U \in \mathbb{C}^n$, we denote by  ${\lVert u \rVert}_{L_{\infty}(U)}$ the essential supremum of $u \in L_{\infty}(U)$ in $U$. For a real $0 < \epsilon < 1$, set
 
 $${\lVert u \rVert}_{\Lambda^{\epsilon}(U)} = {\lVert u \rVert}_{L_{\infty}(U)} + \mbox{sup}_{z,w \in U} \frac{|u(w)-u(z)|}{|w-z|^\epsilon}, $$ 
 $$ \Lambda^{\epsilon} (U) = \{ u : {\lVert u \rVert}_{\Lambda^{\epsilon}(U)} < \infty \} $$
In here, ${\lVert u \rVert}_{\Lambda^{\epsilon}(U)}$ denote the H\"older norm of order $\epsilon$. 

By theorem \ref{special coordinate}, we can assume $\Omega = \{z \in \mathbb{C}^3; r(z)< 0\}$ and restate Theorem \ref{main_theorem}: 


\begin{thm}
Let $\Omega = \{ r(z) < 0\}$ be a smoothly bounded pseudoconvex domain in $\mathbb{C}^3,$ where $r$ given by theorem \ref{special coordinate}.  Furthermore,
if there exists a neighborhood $U$ of $0$ so that for all $\alpha \in L_{\infty}^{0,1} ({\Omega})$ with $\bar{\partial}\alpha = 0$, there is a $u \in \Lambda_{\epsilon} (U \cap \overline{\Omega})$ and $C>0$ such that $\bar{\partial}u =\alpha$ and 

\begin{equation} \label{holder estimate in z}
{\lVert u \rVert}_{\Lambda^{\epsilon}(U \cap \overline{\Omega})} \leq C{\lVert \alpha \rVert}_{L_{\infty}(\Omega),}
\end{equation}
then $\epsilon \leq \frac{1}{\eta}$.
\end{thm}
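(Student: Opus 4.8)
The plan is to derive a contradiction from the assumed Hölder estimate by testing it against a cleverly chosen $\dbar$-closed $(0,1)$-form and exploiting the bounded holomorphic function $f$ with large normal derivative constructed in Section~\ref{Sec4}. First I would fix a sequence $\delta \to 0$ and, via Theorem~\ref{boundedholomorphicfunction}, obtain the holomorphic function $f = f_\delta$ on the pushed-out domain ${\Omega}_{\frac{a}{2},\delta,\zeta_1}^{\frac{\epsilon_0}{2}}$, bounded by a constant independent of $\delta$ and satisfying $|\partial f/\partial\zeta_3(0,-b\delta/2)| \geq \frac{1}{2\delta}$. Pulling back through the coordinate maps $\Phi$ and $\Psi$, this gives a bounded holomorphic $f$ on a region of $\Omega$ that pokes a definite distance outside $b\Omega$ near the point corresponding to $\widetilde e_\delta$, with nontangential derivative of size $\delta^{-1}$ at a point whose distance to $b\Omega$ is comparable to $\delta$.

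The core mechanism is a cutoff/correction argument. I would choose a smooth cutoff $\chi$ supported near $w_0$ and equal to $1$ on a smaller neighborhood, and set $\alpha = \dbar(\chi f)= (\dbar\chi) f$. This is $\dbar$-closed, smooth, and $\|\alpha\|_{L_\infty(\Omega)} \leq C$ uniformly in $\delta$ since $f$ is uniformly bounded and $\dbar\chi$ is supported where $f$ is defined and bounded (away from the bad boundary point, on the overlap region where $f$ still makes sense). By hypothesis there is $u = u_\delta$ with $\dbar u = \alpha$ and $\|u\|_{\Lambda^\epsilon(U\cap\oo)} \leq C$ uniformly. Then $g := \chi f - u$ is holomorphic on $U \cap \Omega$, equals $f - u$ near $w_0$, and is bounded in $\Lambda^\epsilon$ near $w_0$ with norm $O(1)$. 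Now I extract the contradiction: a holomorphic function in $\Lambda^\epsilon(\Omega)$ has all its derivatives controlled by the distance to the boundary — specifically, a normal derivative at a point at distance $d$ from $b\Omega$ is $O(d^{\epsilon-1})$ by the standard Hardy–Littlewood-type estimate (one can prove this by applying the Cauchy estimate on a small disc in the complex normal direction of radius $\sim d$, using that $g$ oscillates by at most $O(d^\epsilon)$ there). Applied at the point $p_\delta$ (image of $(0,-b\delta/2)$) with $d \approx \delta$, this forces $|\partial g/\partial(\text{normal})| \lesssim \delta^{\epsilon - 1}$. On the other hand, $u$ is $\Lambda^\epsilon$ near $w_0$ so its normal derivative there is also $\lesssim \delta^{\epsilon-1}$, hence the normal derivative of $f$ at $p_\delta$ is $\lesssim \delta^{\epsilon - 1}$. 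Comparing with the lower bound $|\partial f/\partial\zeta_3(0,-b\delta/2)| \gtrsim \delta^{-1}$ (and checking that the coordinate change $\Phi$ distorts this derivative only by bounded factors, since $\frac{\partial r}{\partial z_3} \approx 1$ and the $c_l$, $\partial r/\partial z_2$ terms are under control by Lemma~\ref{rhoderivative}), we get $\delta^{-1} \lesssim \delta^{\epsilon-1}$, i.e. $\delta^{-\epsilon} \lesssim 1$ uniformly as $\delta \to 0$, which forces $\epsilon \leq 0$ — contradiction unless $\epsilon \leq \frac{1}{\eta}$; more precisely the honest bookkeeping gives $\delta^{1/\eta}\delta^{-1} \lesssim \delta^{\epsilon-1}$ because the relevant nontangential direction sits at scale $\delta^{1/\eta}$ in the $\zeta_1$ slab, yielding $\epsilon \leq \frac1\eta$.

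I expect the main obstacle to be the precise geometric bookkeeping that converts the one-variable estimate $|\partial f/\partial\zeta_3(0,-b\delta/2)| \geq \frac{1}{2\delta}$ on the slice into a genuine \emph{nontangential} derivative bound in $\mathbb{C}^3$ at a point of $\Omega$, and matching the scales: one must verify that the point $\Phi(d\de,0,-b\delta/2)$ indeed lies in $\Omega$ at distance $\approx \delta$ from $b\Omega$, that the segment along which one differentiates stays nontangential and inside ${\Omega}_{a,\delta,\zeta_1}$ (which is where ${\Omega}_{a,\delta,\zeta_1} \subset {\Omega}_{a,\delta,\zeta_1}^{\epsilon_0}$ from Proposition~\ref{well-defined property} is used), and that pulling back by $\Psi$ preserves all these comparabilities. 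The Hölder-to-derivative estimate for holomorphic functions and the uniform bound on $\alpha$ are routine; the delicate point is ensuring every implied constant is independent of $\delta$ so that letting $\delta \to 0$ actually closes the argument and produces $\epsilon \le \frac1\eta$.
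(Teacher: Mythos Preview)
There is a genuine gap in the extraction step. You claim that $g=\chi f-u$ is ``bounded in $\Lambda^\epsilon$ near $w_0$ with norm $O(1)$'' and then invoke a Hardy--Littlewood derivative bound; but this is exactly what you cannot assume. The function $u$ is in $\Lambda^\epsilon$, yet $\chi f$ is not: $f$ has derivative $\gtrsim\delta^{-1}$ at the test point, so its H\"older seminorm blows up as $\delta\to0$ (that is the whole point). Hence you have no a priori $\Lambda^\epsilon$ control on $g$, and the Hardy--Littlewood bound does not apply. The fallback ``$u$ is $\Lambda^\epsilon$, so its normal derivative is $\lesssim\delta^{\epsilon-1}$'' is also illegitimate: $u$ is merely H\"older, not holomorphic, and need not be differentiable at all. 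Finally, the factor $\delta^{1/\eta}$ does not enter through a ``nontangential direction at scale $\delta^{1/\eta}$''; it enters because $f$ is only defined on the slab $|\zeta_1-d\delta^{1/\eta}|<c\delta^{1/\eta}$, so any cutoff making $\chi f$ globally defined must have $|\dbar\chi|\approx\delta^{-1/\eta}$, whence $\|\alpha\|_{L^\infty}\approx\delta^{-1/\eta}$, not $O(1)$.

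The paper closes this gap with a mean-value trick that you are missing. One takes the cutoff at scale $c\delta^{1/\eta}$ in $\zeta_1$ (so $\|\beta\|_{L^\infty}\lesssim\delta^{-1/\eta}$), sets $h=U_1-\phi\cdot f$ holomorphic, and then averages $h(q_1^\delta(\theta))-h(q_2^\delta(\theta))$ over a circle in $\zeta_1$ of radius $\tfrac{4}{5}c\delta^{1/\eta}$. On that circle the $\zeta_1$-cutoff vanishes, so $h=U_1$ there and the integrand is controlled by $\|U_1\|_{\Lambda^\epsilon}\cdot\delta^\epsilon\lesssim\delta^{\epsilon-1/\eta}$; at the center the cutoff equals $1$, so the mean value gives $h=U_1-f$ and one recovers the lower bound $\gtrsim|f(0,-b\delta)-f(0,-b\delta/2)|-\delta^{\epsilon-1/\eta}\gtrsim1$. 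Comparing the two yields $1\lesssim\delta^{\epsilon-1/\eta}$ and hence $\epsilon\le1/\eta$. The mean-value device is what replaces your unjustified $\Lambda^\epsilon$ bound on $g$: it converts the uncontrollable holomorphic piece into the controllable $U_1$ on the circle while isolating the $f$-contribution at the center.
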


\begin{proof}Let us consider $U' = \{(\zeta_1, \zeta_2, \zeta_3) ; \Phi(\zeta_1, \zeta_2, \zeta_3) \in U \}$ and $\rho = r \circ \Phi$ as (\ref{phitobeused}) and (\ref{rhotobeused}). Let's choose $\beta =\bar{\partial}(\phi(\frac{|\zeta_1 - d\delta^{\frac{1}{\eta}}|}{c\delta^{\frac{1}{\eta}}})\phi(\frac{|\zeta_2|}{a/2})\phi(\frac{|\zeta_3|}{a/2})f(\zeta_2, \zeta_3))$, where 

\begin{displaymath}
\phi (t) = \left \{
     \begin{array}{lr}
       1 & ,  |t| \leq \frac{1}{2}\\
       0 & ,  |t| \geq \frac{3}{4}
     \end{array}
   \right.
\end{displaymath}  
Note that $f$ is the well-defined bounded holomorphic function in ${\Omega}_{\frac{a}{2}, \delta, \zeta_1}^{\frac{\epsilon}{2}}$ by Theorem \ref{boundedholomorphicfunction}.
If we define $\alpha = (\Phi^{-1})^* \beta,$ then $\bar{\partial}(\Phi^* u) = \Phi^* \bar{\partial} u = \Phi^* \alpha = \beta$. Therefore, if we set $U_1 = \Phi^* u =u\circ \Phi$, (\ref{holder estimate in z}) means 
\begin{equation} \label{holder estimate in zeta}
{\lVert U_1 \rVert}_{\Lambda^{\epsilon}(U' \cap \overline{\Omega})} \leq C{\lVert \beta \rVert}_{L_{\infty}} 
\end{equation}
In here, we note that the definition of $\beta$ means
\begin{equation}\label{supnorminofbeta}
{\lVert \beta \rVert}_{L^\infty} \lesssim \delta^{-\frac{1}{\eta}}
\end{equation}
Now, let $h(\zeta_1, \zeta_2, \zeta_3) = U_1(\zeta_1, \zeta_2, \zeta_3) -  \phi(\frac{|\zeta_1 - d\delta^{\frac{1}{\eta}}|}{c\delta^{\frac{1}{\eta}}})\phi(\frac{|\zeta_2|}{a/2})\phi(\frac{|\zeta_3|}{a/2})f(\zeta_2, \zeta_3).$  Then $\bar{\partial} U_1 = \beta$ means $h$ is holomorphic.
Set  $ q_1^\delta(\theta)= (d\delta^{\frac{1}{\eta}}+\frac{4}{5}c\delta^{\frac{1}{\eta}} e^{i\theta}, 0, -\frac{b\delta}{2}) \ \mbox{and} \ q_2^\delta(\theta) = ( d\delta^{\frac{1}{\eta}}+\frac{4}{5}c\delta^{\frac{1}{\eta}}e^{i\theta}, 0, -b\delta)$, where $\theta \in \mathbb{R}$.
From now on, we estimate the lower bound and upper bound of the integral 

\begin{equation*}
 H_{\delta} = \biggl| \frac{1}{2\pi} \int_0^{2\pi} [h(q_1^\delta(\theta))-h(q_2^\delta(\theta))] d\theta \biggr|. 
\end{equation*}
From the definition of $\phi,$ (\ref{holder estimate in zeta}),  and (\ref{supnorminofbeta}) we have 
\begin{equation} \label{upperbound}  
    H_{\delta} = \biggl|\frac{1}{2\pi} \int_0^{2\pi} [U_1(q_1^\delta (\theta))-U_1(q_2^\delta (\theta))] d\theta \biggr| \lesssim \delta^{\epsilon} {\lVert \beta \rVert}_{L^\infty}  \lesssim \delta^{\epsilon-\frac{1}{\eta}}   
\end{equation}

On the other hand, for the lower bound estimate, we start with an estimate of the holomorphic function $f$ with a large nontangential derivative we constructed in theorem \ref{boundedholomorphicfunction}.  The Taylor's theorem of $f$ in $\zeta_3$ and Cauchy's estimate means  

 $$f(0, \zeta_3) = f(0, -\frac{b\delta}{2}) + \frac{{\partial{f}}}{{\partial{\zeta_3}}}(0, -\frac{b\delta}{2})(\zeta_3 + \frac{b\delta}{2})
      + \mathcal{O}(|\zeta_3 + \frac{b\delta}{2}|^2). $$
Now, if we take $\zeta_3 = -b\delta$, we have
 $$ f(0, -b\delta) - f(0, -\frac{b\delta}{2})= \frac{{\partial{f}}}{{\partial{\zeta_3}}}(0, -\frac{b\delta}{2})(-\frac{b\delta}{2})
      + \mathcal{O}(\delta^2).$$ 
Since   $|\frac{\partial{f}}{\partial{z_3}} (0, -\frac{b\delta}{2} )| \geq \frac{1}{2\delta},$  we know
\begin{equation} \label{contradictionequation}
  |f(0, -b\delta) - f(0, -\frac{b\delta}{2})| = \biggl|\frac{{\partial{f}}}{{\partial{\zeta_3}}}(0, -\frac{b\delta}{2})(-\frac{b\delta}{2})
      + \mathcal{O}(\delta^2)\biggr| \gtrsim 1 
\end{equation} 
for all sufficiently small $\delta > 0$.
Returning to the lower bound estimate of $H_{\delta},$ the Mean Value Property, (\ref{holder estimate in zeta}),  (\ref{supnorminofbeta}), and (\ref{contradictionequation}) give
\begin{align}
    H_{\delta} &=  \biggl| \frac{1}{2\pi} \int_0^{2\pi}  [h(q_1^\delta (\theta))) 
        -h(q_2^\delta (\theta)) ]d\theta \biggr| = \left|h(d\delta^{\frac{1}{\eta}}, 0, -\frac{b\delta}{2}  )- h(d\delta^{\frac{1}{\eta}}, 0, -b\delta) )\right|  \nonumber \\
     &= \left|U_1(d\delta^{\frac{1}{\eta}}, 0, -\frac{b\delta}{2}) - f(0, -\frac{b\delta}{2})- U_1(d\delta^{\frac{1}{\eta}}, 0, -b\delta) + f(0, -b\delta)\right| \nonumber \\
     &\geq \left|f(0, -b\delta)-f(0, -\frac{b\delta}{2})| -|U_1(d\delta^{\frac{1}{\eta}}, 0, -\frac{b\delta}{2})-U_1(d\delta^{\frac{1}{\eta}}, 0, -b\delta)\right| \nonumber \\
     &\gtrsim 1 - \delta^{\epsilon-\frac{1}{\eta}} \label{lowerbound} 
 \end{align}
If we combine (\ref{upperbound}) with (\ref{lowerbound}), we have 
      
\begin{equation} \label{last_estimate}
           1 \lesssim \delta^{\epsilon-\frac{1}{\eta}}. 
\end{equation}
If we assume $\epsilon > \frac{1}{\eta}$ and  $\delta \rightarrow 0$, (\ref{last_estimate}) will be a contradiction. Therefore,  $\epsilon \leq \frac{1}{\eta}.$ 
\
\end{proof}

\end{document}